\newtheorem{thm}{Theorem}[section]
\newtheorem{cor}[thm]{Corollary}
\newtheorem{lem}[thm]{Lemma}
\newtheorem*{conj*}{Conjecture}
\newtheorem{remark}[thm]{Remark}
\newtheorem{thmx}{Theorem}
\newtheorem{corx}[thmx]{Corollary}
\numberwithin{equation}{section}
\newcommand{\ab}{\varrho}
\newcommand{\bo}{{\rm O}}
\newcommand{\diri}[2]{\lambda_{#1}^{\left(#2\right)}}
\newcommand{\navi}[2]{\sigma_{#1}^{\left(#2\right)}}
\newcommand{\ds}{\displaystyle}
\newcommand{\dint}{\ds\int}
\newcommand{\dsum}{\ds\sum}
\newcommand{\eqskip}{ \vspace*{2mm}\\ }
\newcommand{\fr}[2]{\frac{\ds #1}{\ds #2}}
\newcommand{\dintd}[2]{{\ds \int_{\ds #1}^{\ds #2}}}
\newcommand{\dprod}{\ds\prod}
\newcommand{\so}{{\rm o}}
\begin{document}

\title[Inequalities and asymptotics for polyharmonic eigenvalues]{Sharp inequalities and asymptotics for polyharmonic eigenvalues}
\author{D.\ Buoso}
\author{P.\ Freitas}
\keywords{Biharmonic operator, polyharmonic operators, buckled plate, eigenvalues, inequality, asymptotics.}
\subjclass[2020]{\text{Primary 35J30. Secondary 35P15, 49R05, 74K20}}

\address{Dipartimento per lo Sviluppo Sostenibile e la Transizione Ecologica, Universit\`a degli Studi del Piemonte Orientale ``A.\ Avogadro'', piazza Sant'Eusebio 5,
13100 Vercelli, Italy}
\email{davide.buoso@uniupo.it}

\address{Departamento de Matem\'atica, Instituto Superior T\'ecnico, Universidade de Lisboa, Av. Rovisco Pais 1,
P-1049-001 Lisboa, Portugal}
\email{pedrodefreitas@tecnico.ulisboa.pt}


\begin{abstract}
 We study eigenvalues of general scalar Dirichlet polyharmonic problems in domains in $\mathbb R^{d}$. We first
 prove a number of inequalities satisfied by the eigenvalues on general domains, depending on the relations
 between the orders of the operators involved. We then obtain several estimates for these eigenvalues, yielding their growth as a function of these orders. For the problem in the ball we derive the general form of eigenfunctions together with the equations satisfied by the corresponding eigenvalues, and obtain several bounds for the first eigenvalue. In the case of the polyharmonic operator of order $2m$ we derive precise bounds yielding the first two terms in the asymptotic expansion for the first normalised eigenvalue as $m$ grows to infinity.
These results allow us to obtain the order of growth for the $k^{\rm th}$ polyharmonic
 eigenvalue on general domains.
\end{abstract}

\maketitle

\tableofcontents

\section{Introduction}

Let $\Omega$ be a domain (i.e., a connected open set) of finite Lebesgue measure in $\mathbb R^d$, $d\ge1$. Given positive integers
$t\leq m$ we consider the following class of eigenvalue
problems involving polyharmonic operators 
\begin{equation}
\label{dirichletbuckling}
\left\{\begin{array}{ll}
(-\Delta)^mu=\diri{}{m,t} (-\Delta)^{m-t}u, & {\rm \ in\ }\Omega,\eqskip
u=\fr{\partial u}{\partial \nu}=\dots=\fr{\partial^{m-1} u}{\partial \nu^{m-1}}=0, & {\rm \ on\ }\partial \Omega.
\end{array}\right.
\end{equation}
Here $\Delta$ denotes the usual Laplacian, and $\nu$ is the outer unit normal to the domain $\Omega$; the corresponding set of boundary conditions is usually
referred to as Dirichlet boundary conditions in the literature.
Higher-order elliptic problems of this and similar type have been intensively studied in the literature for more than a century, not only due to their relations to applications,
but also because they are an interesting mathematical object in their own right~\cite{almansi,cqw,freilipo,ggs,grs,jlwx2,jlwx,kkm,leyl,puccserr,safvass,yoyo}.

For specific choices of the parameters $m$ and $t$ the above general problem
specialises into well-known problems such as the Dirichlet Laplacian ($m=t=1$), the Dirichlet bilaplacian ($m=t=2$), the buckling problem ($m=2$ and $t=1$),
and for any pair of integers $m$ and $t$ with $m=t$ we have the Dirichlet problem for the general polyharmonic operator, namely,
\begin{equation}
\label{dirichlet}
\left\{\begin{array}{ll}
(-\Delta)^mu=\diri{}{m,m} u, & {\rm \ in\ }\Omega,\eqskip
u=\fr{\partial u}{\partial \nu}=\dots=\fr{\partial^{m-1} u}{\partial \nu^{m-1}}=0, & {\rm \ on\ }\partial \Omega.
\end{array}\right.
\end{equation}
In this last instance, and to simplify notation, we will refer to general polyharmonic eigenvalues by $\diri{}{m,m}=\diri{}{m}$.

It is known that under the conditions considered the spectrum of the above problem is discrete and constitutes an increasing sequence of the form~\cite{bula}
\[
 0< \diri{1}{m,t}(\Omega)\leq \diri{2}{m,t}(\Omega) \leq \diri{3}{m,t}(\Omega) \leq \cdots,
\]
with $\diri{k}{m,t}(\Omega) \to +\infty$ as $k\to\infty$. The corresponding asymptotic behaviour in $k$ is given by the following Weyl law~\cite{birsol1,birsol2}
(see also \cite{roz1,roz2,safvass,vass})
\begin{equation}
\label{weyl1}
\diri{k}{m,t}(\Omega)=(2\pi)^{2t}\left(\fr{k}{\omega_d|\Omega|}\right)^{\frac{2t}{d}}+ \so\left(k^{\frac {2t}{d}}\right),
\end{equation}
for any (smooth enough) bounded domain $\Omega$, where $\omega_d$ is the measure of the $d$-dimensional unit ball and $|\Omega|$ denotes
the ($d$-dimensional) measure of $\Omega$. As pointed out above, the case $m=t=1$ corresponds to the usual Dirichlet Laplacian, and the first term in the above asymptotics
is just the first term of the Weyl law for the Laplacian to the power $t$. 
We thus see that the parameter $t$, which encodes the difference in the differential orders
of the two operators involved in the eigenvalue problem~\eqref{dirichletbuckling}, affects the rate of growth in the order of the eigenvalues, while the paramenter $m$
which corresponds to half of the actual differential order of the equation does not appear in this first term. These observations raise two issues. The first
is that a natural normalisation for the eigenvalues of problem~\eqref{dirichletbuckling} is to consider their $(2t)^{\rm th}$ root. In this way, the first term in the Weyl
law is always the same (for a given fixed dimension), namely,~\eqref{weyl1} may now be written as
\begin{equation}
\label{weylmt}
\left(\diri{k}{m,t}(\Omega)\right)^{\frac{1}{2t}}=2\pi\left(\fr{k}{\omega_d|\Omega|}\right)^{\frac{1}{d}}+ \so\left(k^{\frac{1}{d}}\right),
\end{equation}
and, maybe more relevant, it thus produces quantities that are comparable -- this normalisation is widely used in the literature, as for instance in~\cite{safvass}, where the 
eigenvalue problem for a positive definite elliptic self-adjoint operator $A$ of order $2m$ is written as $Au = \gamma^{2m} u$, or in~\cite{erve}, where the relevant parameter
appearing in a control problem is precisely the first eigenvalue of~\eqref{dirichlet} in one dimension, normalised in this way -- see Remark~\ref{remerve} below
for more details.

The second issue is that this perspective, at least at this level and to some extent, is not taking into consideration the actual effect of the differential order of
the equation on the eigenvalues. To be more specific, by the precise nature of Weyl's law, the above considerations relate to the asymptotic behaviour in the order
of the eigenvalue, leaving out the dependence of, for instance, the first eigenvalue on the differential order of the operator. Apart from being a natural question
of theoretical interest, this behaviour is also of actual relevance in applications and, to the best of our knowledge, is not known even in the one-dimensional case.
In fact, most of the literature on the subject has been devoted to the study of sums of eigenvalues and inequalities of universal type, whose asymptotic sharpness
is more closely related to the dependence of the eigenvalues on their order rather than on the actual differential order of the operators involved -- see, for
instance,~\cite{cqw,jlwx2,jlwx,yoyo}. For the former type of problems, the dependence remains, to leading order, independent of the boundary conditions considered.
However, this will no longer be the case for the growth of a fixed eigenvalue, say the fundamental tone, i.e., the first eigenvalue, on the order of the operator. In fact,
this will now depend in a critical
fashion on whether Navier or Dirichlet boundary conditions are being considered -- see Theorem~\ref{thmA} and Remark~\ref{naviervsdiriichlet}. To the best of our knowledge,
so far the bounds addressing this issue did not distinguish between these two types of boundary conditions either -- see~\cite{erve,puccserr}, for instance.

One of the main purposes of this paper is thus to fill in this gap in the literature, by determining the precise asymptotic behaviour of
$\left(\diri{k}{m,t}(\Omega)\right)^{1/(2t)}$,
with respect to the parameters $m$ and $t$, as $m$ gets large. In fact, this is a consequence of stronger results concerning the order of growth of $\diri{k}{m,t}(\Omega)$ 
in $m$, when either $t$ or the difference $m-t$ is kept fixed (see Theorems~\ref{thmA1} and~\ref{ballasympt}, and Section~\ref{sec:ub}). To this end, let us recall the $\Theta$ notation, namely, $f(m)=\Theta(g(m))$ as $m$ goes to infinity
if there exist positive constants $c_1$ and $c_2$ such that $c_1g(m)\le f(m)\le c_2 g(m)$ for all sufficiently large $m$.
\begin{thmx}[Asymptotic behaviour for general domains]\label{maingeral}
Let $\Omega\subseteq\mathbb R^d$ be a bounded domain and $k\in\mathbb N$. Then, for $t$ fixed,
\[
\lambda_k^{(m,t)}(\Omega)=\Theta(m^{2t}), \mbox{ as }  m\to\infty,
\]
while for $h$ fixed,
\[
\Theta\left(2^{2m}e^{-2m}m^{2m-2h+\frac{d}{2}}\right) = \diri{1}{m-h}(\Omega)\leq \lambda_k^{(m,m-h)}(\Omega)=\bo(2^{2m}e^{-2m}m^{2m-h+d/2}),
\]
as  $m\to\infty$
\end{thmx}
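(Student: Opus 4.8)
\emph{Proof proposal.} The plan is to treat the two regimes separately, in each case passing to the ball by combining inclusion monotonicity of the Dirichlet eigenvalues (extending functions by zero shows that enlarging $\Omega$ never increases any $\diri{k}{m,t}(\Omega)$) with the scaling law $\diri{k}{m,t}(r\Omega)=r^{-2t}\diri{k}{m,t}(\Omega)$ of~\eqref{dirichletbuckling}, and then feeding in the sharp estimates on the ball from Theorems~\ref{thmA1}--\ref{ballasympt} and Section~\ref{sec:ub}.

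For $t$ fixed I would prove the two inequalities behind $\Theta(m^{2t})$ independently. Upper bound: pick $k$ pairwise disjoint balls $B_\rho(x_1),\dots,B_\rho(x_k)\subseteq\Omega$ and on each put a rescaled, translated copy of a fixed bump of $H_0^m(B_1)$, for instance $(1-|x|^2)^m$; disjointness of the supports makes the Rayleigh quotient over the $k$-dimensional span at most the largest individual quotient, and by scaling each of those equals $\rho^{-2t}$ times the corresponding quotient on $B_1$, which is $\bo(m^{2t})$ by a Stirling-type estimate of the kind carried out in Section~\ref{sec:ub}; since $t$ is fixed the factor $\rho^{-2t}$ is $m$-independent. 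Lower bound: with $\Omega\subseteq B_R$, monotonicity and scaling give $\diri{k}{m,t}(\Omega)\ge\diri{1}{m,t}(\Omega)\ge\diri{1}{m,t}(B_R)=R^{-2t}\diri{1}{m,t}(B_1)$, and the ball estimate $\diri{1}{m,t}(B_1)\ge c\,m^{2t}$ closes it, again with $R^{-2t}$ independent of $m$. Together these give $\diri{k}{m,t}(\Omega)=\Theta(m^{2t})$.

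For $h=m-t$ fixed, the lower bound is the chain $\diri{k}{m,m-h}(\Omega)\ge\diri{1}{m,m-h}(\Omega)\ge\diri{1}{m-h}(\Omega)$, whose second step is one of the general-domain inequalities from the first part of the paper: for $u\in H_0^m(\Omega)$ and $|\alpha|=h$ one has $\partial^\alpha u\in H_0^{m-h}(\Omega)$, hence $\int|\nabla^{m-h}\partial^\alpha u|^2\ge\diri{1}{m-h}(\Omega)\int|\partial^\alpha u|^2$, and summing over $\alpha$ gives $\int|\nabla^m u|^2\ge\diri{1}{m-h}(\Omega)\int|\nabla^h u|^2$, that is, $\diri{1}{m,m-h}(\Omega)\ge\diri{1}{m-h}(\Omega)$. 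It then remains to locate the growth of the pure polyharmonic eigenvalue $\diri{1}{m-h}(\Omega)$: setting $\mu=m-h$, inclusion monotonicity and scaling reduce matters to the ball, where Theorem~\ref{ballasympt} supplies the sharp asymptotics $\diri{1}{\mu}(B_1)=\Theta\!\left(2^{2\mu}e^{-2\mu}\mu^{2\mu+d/2}\right)$ (indeed its first two terms), and re-expanding $\mu=m-h$ (folding the convergent factors $(1-h/m)^{2m}\to e^{-2h}$ into the constants) yields the rate $2^{2m}e^{-2m}m^{2m-2h+d/2}$. For the matching upper bound $\diri{k}{m,m-h}(\Omega)=\bo\!\left(2^{2m}e^{-2m}m^{2m-h+d/2}\right)$ I would again pass to a ball inside $\Omega$ and invoke the corresponding bound for $\diri{k}{m,m-h}(B_1)$ from Section~\ref{sec:ub}; when $k=1$ one may alternatively test the $(m,m-h)$ quotient with the first polyharmonic eigenfunction, obtaining $\diri{1}{m,m-h}(\Omega)\le\diri{1}{m}(\Omega)/\diri{1}{h}(\Omega)$ and using the ball bound for $\diri{1}{m}$, which already captures the exponential and $m^{2m}$ orders.

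I expect the real difficulty to lie not in this assembly but in the ball estimates it quotes. Extracting the exact exponential constant $2^{2\mu}e^{-2\mu}$ together with the algebraic factor $\mu^{2\mu+d/2}$ for $\diri{1}{\mu}(B_1)$ requires analysing the transcendental equation satisfied by the eigenvalue on the ball, written through Bessel and modified Bessel functions, followed by a delicate Stirling expansion; that is precisely the content of Theorem~\ref{ballasympt}. A secondary point is that in the regime $h$ fixed the naive bump $(1-|x|^2)^m$ overestimates the true rate by a factor exponential in $m$, so the matching upper bounds there, and in particular their uniformity in $k$, must be drawn from the finer test-function constructions of Section~\ref{sec:ub} rather than from elementary choices.
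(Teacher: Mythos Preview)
Your approach is essentially the paper's: a P\'olya-type inclusion argument (the paper encloses $\Omega$ in $k$ disjoint copies of a circumscribed ball for the lower bound and fits $k$ disjoint balls inside $\Omega$ for the upper bound, whereas you route the lower bound through $\lambda_k\ge\lambda_1$ and a single circumscribed ball---both are fine) combined with scaling, reducing everything to $\lambda_1^{(m,t)}(\mathbb B)$ and then quoting Theorems~\ref{thmA1} and~\ref{ballasympt}. The inequality $\diri{1}{m,m-h}(\Omega)\ge\diri{1}{m-h}(\Omega)$ you sketch is exactly Theorem~\ref{sameorder}.

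Two corrections to your closing commentary, though they do not affect the argument. First, the bump $(1-|x|^2)^m$ does \emph{not} overestimate in the $h$-fixed regime: it is precisely the test function used in Theorem~\ref{ballupperb} (via Lemma~\ref{hyperlemma}) and yields the stated rate $2^{2m}e^{-2m}m^{2m-h+d/2}$ directly, so there is no ``finer test-function construction'' in Section~\ref{sec:ub} beyond this. Second, the sharp lower bound on $\diri{1}{m}(\mathbb B)$ is not obtained by analysing the Bessel transcendental equation; instead the paper introduces the auxiliary weighted problem~\eqref{auxbuck}, whose first eigenvalue bounds $\diri{1}{m}(\mathbb B)$ from below (Theorem~\ref{eineqaux}) and can be computed exactly because $(1-|x|^2)^m$ is itself an eigenfunction (Lemma~\ref{fund}); Stirling's formula is then applied to the resulting Gamma expressions.
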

\noindent As will be seen, our results point to this upper bound being sharp and thus the actual behaviour of $\diri{1}{m,m-h}(\Omega)$ being that indicated
by the term on the right-hand side above.
The proof of Theorem~\ref{maingeral} relies on a detailed study of the first eigenvalue of the unit ball which is, in turn, a consequence of 
an auxiliary problem associated with~\eqref{dirichletbuckling} -- see Section~\ref{sec:lowbounds}. We believe this auxiliary problem to be of interest in its own right, and will study it in more detail elsewhere.
 
 As a result of this approach, for the case of the polyharmonic operator ($m=t$)  we are able to derive sharper bounds for the first  eigenvalue, and thus obtain
 a more precise asymptotic behaviour (see Theorems~\ref{ballupperb} and~\ref{eineqaux}, and Lemma~\ref{lemeineqaux}).

\begin{thmx}[Bounds for the fundamental tone of the ball]\label{thmA} Let $\mathbb{B}$ be the unit ball in $\mathbb R^{d}$ and denote by $\lambda_{1}^{(m)}(\mathbb B)$ the first eigenvalue of
the problem
\begin{equation}
\label{polyh}
\left\{\begin{array}{ll}
(-\Delta)^m u=\lambda u, & {\rm \ in\ }\mathbb{B},\eqskip
u=\fr{\partial u}{\partial \nu}=\dots=\fr{\partial^{m-1} u}{\partial \nu^{m-1}}=0, & {\rm \ on\ }\partial \mathbb{B}.
\end{array}\right.
\end{equation}
Then
\begin{multline*}
\fr{2\pi}{\Gamma(d/2)} \left(\fr{2m}{e}\right)^{2m}  m^{\frac{d}{2}}
\approx 2^{2m}\fr{\Gamma(m+1)\Gamma(m+d/2)}{\Gamma(d/2)} 
\leq \diri{1}{m}(\mathbb B) \\
\le 2^{2m}\fr{\Gamma^2(m+1)\Gamma(2m+d/2+1)}{(m+d/2)\Gamma(d/2)\Gamma(2m+1)} 
\approx 2^{\frac d 2}\fr{2\pi}{\Gamma(d/2)} \left(\fr{2m}{e}\right)^{2m} m^{\frac{ d}{ 2}},
\end{multline*}
where $\Gamma$ is the Euler Gamma function and the indicated asymptotics refer to large $m$.
\end{thmx}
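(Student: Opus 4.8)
\medskip
\noindent\textbf{Proof plan.}
Both inequalities will follow from a single, completely explicit radial function. Set $u_0(x)=(1-|x|^2)^m$, a polynomial of degree $2m$ which vanishes to order exactly $m$ on $\partial\mathbb B$, so that $u_0\in H_0^m(\mathbb B)$. The computational heart of the matter is the identity
\[
(-\Delta)^m u_0=c_m:=2^{2m}\,\frac{\Gamma(m+1)\,\Gamma(m+d/2)}{\Gamma(d/2)},
\]
obtained by expanding $u_0=\sum_{j=0}^m\binom{m}{j}(-1)^j|x|^{2j}$: the operator $(-\Delta)^m$ annihilates every term with $j<m$, and applying $\Delta|x|^{2\ell}=2\ell(2\ell+d-2)|x|^{2\ell-2}$ repeatedly gives $(-\Delta)^m\bigl((-1)^m|x|^{2m}\bigr)=\Delta^m|x|^{2m}=\prod_{j=1}^m 2j(2j+d-2)=c_m$. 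Equivalently, $w:=c_m^{-1}u_0$ is the unique solution in $H_0^m(\mathbb B)$ of the torsion-type problem $(-\Delta)^m w=1$ in $\mathbb B$ with vanishing Dirichlet data, and $0<w\le w(0)=c_m^{-1}$ on $\mathbb B$. The only explicit integrals needed are the Beta-type ones $\int_{\mathbb B}(1-|x|^2)^\alpha\,dx=\pi^{d/2}\Gamma(\alpha+1)/\Gamma(\alpha+d/2+1)$, which I would use for $\alpha=m$ and $\alpha=2m$.

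\medskip
\noindent For the upper bound I would insert $u_0$ as a trial function in the Rayleigh quotient attached to \eqref{polyh}. Since $u_0$ is smooth and lies in $H_0^m(\mathbb B)$, integration by parts turns the numerator into $\int_{\mathbb B}u_0\,(-\Delta)^m u_0=c_m\int_{\mathbb B}u_0$, so that
\[
\diri{1}{m}(\mathbb B)\ \le\ c_m\,\frac{\int_{\mathbb B}(1-|x|^2)^m\,dx}{\int_{\mathbb B}(1-|x|^2)^{2m}\,dx}=c_m\,\frac{\Gamma(m+1)\,\Gamma(2m+d/2+1)}{\Gamma(m+d/2+1)\,\Gamma(2m+1)}.
\]
Substituting the value of $c_m$ and using $\Gamma(m+d/2+1)=(m+d/2)\,\Gamma(m+d/2)$ collapses the right-hand side to $2^{2m}\Gamma^2(m+1)\Gamma(2m+d/2+1)/\bigl((m+d/2)\Gamma(d/2)\Gamma(2m+1)\bigr)$, which is exactly the claimed upper bound.

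\medskip
\noindent For the lower bound I would test the eigenvalue equation against $w$. On the ball, the first eigenfunction $u_1$ of \eqref{polyh} is (radial and) strictly positive by Boggio's positivity principle, and $w>0$; integrating by parts,
\[
\diri{1}{m}(\mathbb B)\int_{\mathbb B}u_1\,w=\int_{\mathbb B}\bigl((-\Delta)^m u_1\bigr)\,w=\int_{\mathbb B}u_1\,(-\Delta)^m w=\int_{\mathbb B}u_1.
\]
Since $\int_{\mathbb B}u_1\,w\le\|w\|_\infty\int_{\mathbb B}u_1$ and $\|w\|_\infty=w(0)=c_m^{-1}$, this yields $\diri{1}{m}(\mathbb B)\ge c_m=2^{2m}\Gamma(m+1)\Gamma(m+d/2)/\Gamma(d/2)$.

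\medskip
\noindent The indicated asymptotics would then come from Stirling's formula: $\Gamma(m+1)\Gamma(m+d/2)\sim 2\pi\,m^{2m+d/2}e^{-2m}$ gives $c_m\sim\frac{2\pi}{\Gamma(d/2)}\bigl(\frac{2m}{e}\bigr)^{2m}m^{d/2}$, while the extra factor in the upper bound, $\Gamma(m+1)\Gamma(2m+d/2+1)/[\Gamma(m+d/2+1)\Gamma(2m+1)]$, tends to $2^{d/2}$. I expect the only genuinely delicate step to be the lower bound — not the arithmetic but the input it relies on: the sign of $\int_{\mathbb B}u_1\,w$, hence positivity of the first eigenfunction on the ball, which is exactly where Boggio's principle and the special geometry enter. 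On a general domain the analogous torsion-function estimate need not survive, which is one reason why the sharp growth rate in Theorem~\ref{maingeral} has to be extracted instead from a different, more robust auxiliary problem.
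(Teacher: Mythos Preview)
Your argument is correct. The upper bound is exactly the paper's: the same test function $u_0=(1-|x|^2)^m$ plugged into the Rayleigh quotient, with the same Beta-integral bookkeeping (the paper packages the numerator computation as Lemma~\ref{hyperlemma} but the content is identical).

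The lower bound, however, follows a genuinely different route. The paper introduces the auxiliary weighted eigenvalue problem
\[
(-\Delta)^m u=\sigma\,\frac{u}{(1-|x|^2)^m}\quad\text{in }\mathbb B,\qquad u\in H_0^m(\mathbb B),
\]
observes that $u_0$ is an eigenfunction with eigenvalue $c_m$, proves via a Krein--Rutman/positivity-preserving argument that this is the \emph{first} eigenvalue $\sigma_1^{(m,m)}$, and then compares Rayleigh quotients (since $\int u^2\le\int u^2(1-|x|^2)^{-m}$) to conclude $\diri{1}{m}(\mathbb B)\ge\sigma_1^{(m,m)}=c_m$. You instead use the classical torsion-function bound: $w=c_m^{-1}u_0$ solves $(-\Delta)^m w=1$, and pairing the eigenvalue equation with $w$ gives $\diri{1}{m}(\mathbb B)\ge\|w\|_\infty^{-1}=c_m$ directly. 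Both arguments ultimately rest on Boggio's positivity on the ball---yours through the sign of the first eigenfunction $u_1$, the paper's through positivity preserving for the weighted problem---so neither is ``cheaper'' at that level. Your approach is more elementary and self-contained for the case $t=m$ at hand; the paper's auxiliary problem is set up as infrastructure intended to attack the general $\diri{1}{m,t}$ case as well (cf.\ Remark~\ref{krarg} and Lemma~\ref{fund}), though the needed positivity for $t<m$ is left open there too. Your closing remark slightly misreads the paper's strategy for general domains: the transfer in Theorem~\ref{maingeral} is by a P\'olya-type inclusion argument from the ball, not by a separate auxiliary problem.
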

Since the asymptotic behaviour for large $m$ of the quotient between the upper and lower bounds equals $2^{d/2}$ and thus depends
only on the dimension $d$, the above expressions yield the right order of growth of the first eigenvalue with respect to $m$.

\begin{remark}\label{naviervsdiriichlet}
 An immediate consequence of these results is that the growth of the first eigenvalue of problem~\eqref{polyh} is much faster than that of the corresponding
 problem with the Navier boundary conditions
 \[
  u=\Delta u=\dots=\Delta^{m-1}u=0.
 \]
 In this case, the first eigenvalue of the unit ball equals $j_{d/2-1,1}^{2m}$, where $j_{d/2-1,1}$ denotes the first positive zero of the Bessel function of
 the first kind $J_{d/2-1}$. 
 A very different behaviour between general polyharmonic operators with Dirichlet and Navier boundary conditions has also been identified
 recently for the corresponding regularised spectral determinants~\cite{freilipo}.
\end{remark}
\begin{remark}\label{psrem}
To the best of our knowledge, the best lower bound in the literature for the first eigenvalue with Dirichlet boundary conditions was that given in~\cite{puccserr},
 which stated that $\diri{1}{m}(\mathbb B)\geq (\lambda_{1}\mu_{2})^{m/2}$ for even $m$, and $\diri{1}{m}(\mathbb B)\geq \lambda_{1} (\lambda_{1}\mu_{2})^{(m-1)/2}$
 for odd $m$, where $\lambda_{1}$ and $\mu_{2}$ are the first Dirichlet and first nontrivial Neumann eigenvalues of the Laplace operator in $\mathbb{B}$,
 respectively.
\end{remark}

Considering now the behaviour of the first eigenvalue normalised with respect to the order of the operator in the way described above, we see that Theorem~\ref{thmA}
yields the following precise two-term asymptotic behaviour.
\begin{corx}\label{normgrowth}
\[
\left[\diri{1}{m}(\mathbb B)\right]^{\fr{1}{2m}}=\fr{2m}{e}+\fr{d}{2e} \log m +\bo(1), \hspace*{10mm} \mbox{ as } m\to\infty. 
\]
\end{corx}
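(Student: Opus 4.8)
The plan is to derive the corollary directly from the two-sided bounds of Theorem~\ref{thmA} by taking $(2m)$-th roots and applying Stirling's formula. Since the map $x\mapsto x^{1/(2m)}$ is increasing on $(0,\infty)$, it is enough to show that both the lower bound
\[
L_m:=2^{2m}\fr{\Gamma(m+1)\Gamma(m+d/2)}{\Gamma(d/2)}
\qquad\text{and the upper bound}\qquad
U_m:=2^{2m}\fr{\Gamma^2(m+1)\Gamma(2m+d/2+1)}{(m+d/2)\Gamma(d/2)\Gamma(2m+1)}
\]
satisfy $L_m^{1/(2m)}=\fr{2m}{e}+\fr{d}{2e}\log m+\bo(1)$ and $U_m^{1/(2m)}=\fr{2m}{e}+\fr{d}{2e}\log m+\bo(1)$ as $m\to\infty$; since $L_m\le\diri{1}{m}(\mathbb B)\le U_m$ by Theorem~\ref{thmA}, the assertion for $\diri{1}{m}(\mathbb B)$ then follows by the squeeze theorem.

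To carry this out I would use Stirling's expansion in the form $\log\Gamma(x+1)=x\log x-x+\tfrac12\log(2\pi x)+\bo(1/x)$, together with its consequence $\log\Gamma(x+a)-\log\Gamma(x+b)=(a-b)\log x+\bo(1/x)$ for fixed reals $a,b$. Applied to $\log L_m=2m\log2+\log\Gamma(m+1)+\log\Gamma(m+d/2)-\log\Gamma(d/2)$, and collecting the terms of order $\log m$, this yields
\[
\tfrac{1}{2m}\log L_m=\log\fr{2m}{e}+\fr{d\log m}{4m}+\bo\!\left(\tfrac{1}{m}\right).
\]
For $U_m$ I would first write $\log\Gamma(2m+d/2+1)=\log\Gamma(2m+1)+\tfrac{d}{2}\log(2m)+\bo(1/m)$, which cancels the factor $\Gamma(2m+1)$ in the denominator; applying Stirling to the remaining $\Gamma^2(m+1)$ then produces the same right-hand side, the only new contribution being the constant $\tfrac{d}{2}\log 2$, which disappears after division by $2m$. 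Exponentiating,
\[
L_m^{1/(2m)}=\fr{2m}{e}\exp\!\left(\fr{d\log m}{4m}+\bo\!\left(\tfrac{1}{m}\right)\right)=\fr{2m}{e}\left(1+\fr{d\log m}{4m}+\bo\!\left(\tfrac{1}{m}\right)\right)=\fr{2m}{e}+\fr{d}{2e}\log m+\bo(1),
\]
and identically for $U_m^{1/(2m)}$, which finishes the argument.

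No step above is genuinely hard; the only point requiring care is the bookkeeping of the error terms, namely checking that the coefficient of $\log m$ is exactly $\tfrac{d}{2e}$ and that everything else collapses into $\bo(1)$ after multiplication by $2m/e$ (in particular the $\bo(1/m^2)$ remainders and the square of the exponent in the series for $\exp$). It is worth noting that the $\bo(1)$ term cannot be replaced by a convergent constant using Theorem~\ref{thmA} alone: since $U_m/L_m\to 2^{d/2}$, one gets $U_m^{1/(2m)}-L_m^{1/(2m)}\to\tfrac{d}{2e}\log 2\neq0$, so the two bounds pin down precisely the first two asymptotic terms and leave a genuine $\bo(1)$ gap --- which is exactly the content of the corollary.
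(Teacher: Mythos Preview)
Your proof is correct and follows essentially the same route as the paper: both arguments take logarithms of the upper and lower bounds from Theorem~\ref{thmA}, apply Stirling's formula to extract $2m\log(2m/e)+\tfrac{d}{2}\log m+\bo(1)$, divide by $2m$, exponentiate, and squeeze. Your closing remark that $U_m/L_m\to2^{d/2}$ forces a genuine $\bo(1)$ gap is a nice addition not made explicit in the paper.
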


\begin{remark}
 \label{remerve} In one dimension, i.e., on the interval $(-1,1)$, the quantity $\gamma_{m} = \left[ \diri{1}{m}(-1,1) \right]^{\frac1{2m}}$ appears in~\cite{erve} in
 connection with an observability problem in control, where it is shown that $\gamma_{m}\leq \fr{\pi m}{2}$. Theorem~\ref{thmA} yields 
 \[
 \gamma_{m} \leq \fr{1}{2} \left[\fr{(m!)^2 (4 m+1)!}{(2 m)! (2 m+1)!} \right]^{1/(2m)},
 \]
 and it is possible to see that, except for the case where $m$ equals one, this value is smaller than $\pi m/2$. Furthermore, by Corollary~\ref{normgrowth} we
 have
 \[
  \gamma_{m} = \fr{2m}{e}+\frac{1}{2e} \log m +\bo(1), \mbox{ as } m\to\infty,
 \]
 answering in the negative the question raised in~\cite{erve} as to whether the quantity ${\ds \liminf_{m\to\infty}}\, \gamma_{m}$ might be finite or not.
\end{remark}

The above results do not take into consideration the normalisation with respect to volume, which is natural, and relevant in some contexts such as when dealing
with isoperimetric inequalities. In order to do this, we should consider the quantity $\left|\Omega\right|^{2m/d} \diri{1}{m}(\Omega)$ instead. This would give, for
instance, a generalised Rayleigh-Faber-Krahn conjecture of the form
\begin{conj*}[Generalised Rayleigh-Faber-Krahn inequality]
 For any domain $\Omega$ of finite measure the first eigenvalue of problem~\eqref{dirichletbuckling} satisfies
 \[
  \left|\Omega\right|^{2t/d} \diri{1}{m,t}(\Omega) \geq \omega_{d}^{2t/d} \diri{1}{m,t}(\mathbb B).
 \]
\end{conj*}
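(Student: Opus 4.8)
The plan is to reduce the conjecture, via the scaling identity $\diri{1}{m,t}(r\Omega)=r^{-2t}\diri{1}{m,t}(\Omega)$, to the assertion that among all domains of prescribed measure the ball minimises the first eigenvalue of~\eqref{dirichletbuckling}: fixing $\Omega$ and letting $\Omega^{*}$ denote the ball with $|\Omega^{*}|=|\Omega|$, it suffices to prove $\diri{1}{m,t}(\Omega)\ge\diri{1}{m,t}(\Omega^{*})$. First I would use the variational characterisation
\[
\diri{1}{m,t}(\Omega)=\inf_{u\in H_0^m(\Omega)\setminus\{0\}}\fr{\dint_\Omega|\nabla^m u|^2\,dx}{\dint_\Omega|\nabla^{m-t}u|^2\,dx},
\]
where, after integration by parts using the Dirichlet conditions, the numerator and denominator equal $\int_\Omega|(-\Delta)^{m/2}u|^2$ and $\int_\Omega|(-\Delta)^{(m-t)/2}u|^2$ respectively (with the obvious modification when the order is odd). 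Then I would take a first eigenfunction $u$ on $\Omega$, pass to its spherically symmetric decreasing rearrangement $u^{*}$ on $\Omega^{*}$, and try to show that symmetrisation does not increase the numerator while it does not decrease the denominator, so that $u^{*}$ becomes an admissible competitor for $\diri{1}{m,t}(\Omega^{*})$ of Rayleigh quotient at most $\diri{1}{m,t}(\Omega)$.

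For $m=t=1$ this is exactly the classical Faber--Krahn argument: equimeasurability gives $\int_{\Omega^{*}}(u^{*})^2=\int_\Omega u^2$, and the P\'olya--Szeg\H{o} inequality gives $\int_{\Omega^{*}}|\nabla u^{*}|^2\le\int_\Omega|\nabla u|^2$. The hard part for $m\ge2$ is twofold. First, the numerator cannot be controlled by rearranging $u$ alone, because the analogue of P\'olya--Szeg\H{o} for the higher-order seminorm $\|\nabla^m\cdot\|_{L^2}$ is false in general, and $u^{*}$ need not even belong to $H^m(\Omega^{*})$. Second, when $t<m$ the denominator involves $\|\nabla^{m-t}\cdot\|_{L^2}$, which one would need to \emph{increase} under symmetrisation, i.e.\ one would need a \emph{reverse} P\'olya--Szeg\H{o} inequality; this is likewise false, and is already the obstruction in the simplest instance $m=2$, $t=1$, namely the P\'olya--Szeg\H{o} buckling conjecture, still open for $d\ge2$.

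To circumvent this I would follow the only route that has so far succeeded in higher order, replacing the direct rearrangement of the eigenfunction by a Talenti-type comparison principle: one represents the first eigenfunction through the Dirichlet Green operator of $(-\Delta)^m$ and compares its rearrangement with the solution on $\Omega^{*}$ of the corresponding equation carrying the symmetrised right-hand side. I expect the main obstacle to be precisely the control of the boundary terms generated by the Dirichlet conditions which, unlike the Navier conditions, do not decouple the polyharmonic operator into a chain of second-order problems amenable to the Talenti comparison and to Faber--Krahn for the Laplacian. For the bilaplacian $m=t=2$ this can be carried through when $d=2$ (Nadirashvili) and $d=3$ (Ashbaugh--Benguria), but only by exploiting a positivity property of the first eigenfunction together with a dimension-restricted estimate; for $m\ge3$, or for any $t<m$, no comparison principle in the form required is currently available. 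The structural reason is that polyharmonic Dirichlet problems admit no maximum principle, their first eigenfunctions are in general sign-changing, and higher-order seminorms are not monotone under rearrangement, so that beyond the case $m=t=1$ (and the bilaplacian in low dimension) a complete proof appears to call for a genuinely new symmetrisation tool --- which is why the statement is offered here only as a conjecture.
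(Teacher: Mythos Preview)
Your assessment is accurate and aligns with the paper: the statement is presented there as a \emph{conjecture}, not a theorem, and the paper offers no proof. The paper merely records the known cases --- classical Faber--Krahn for $m=t=1$ and the Nadirashvili/Ashbaugh--Benguria results for $m=t=2$ in dimensions two and three --- together with the remark that in general only criticality of the ball has been established; you recover exactly this picture and correctly identify the structural obstructions (failure of higher-order P\'olya--Szeg\H{o}, absence of a maximum principle, sign-changing eigenfunctions, and the need for a reverse rearrangement inequality on the denominator when $t<m$).

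One minor correction: your claim that first eigenfunctions of the Dirichlet polyharmonic problem are ``in general sign-changing'' overstates matters for the case $t=m$ on the ball, where positivity and simplicity of the first eigenfunction do hold (this is used elsewhere in the paper); the sign-changing phenomenon is rather a feature of general domains and of the cases $t<m$. This does not affect your overall diagnosis, which is that no complete proof is currently available beyond the instances you list --- precisely why the paper leaves the statement as a conjecture.
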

We note that the only cases where the above conjecture has been shown to hold are the classical Faber-Krahn
result for the Laplacian ($m=t=1$)~\cite{fab,kra1,kra2},
and in dimensions two and three when $m=t=2$~\cite{ashben,nad}, while in general only the criticality of the ball
has been proved~\cite{bula}. Existence of an optimal domain within the class of open sets in
dimensions up to $4m$ has recently been proved in~\cite{leyl}.
We note that, however, at this stage the dependence of the asymptotic behaviour of the normalized eigenvalue $\left|\Omega\right|^{2m/d} \diri{1}{m}(\Omega)$
on the volume of the domain $\Omega$ is not completely clear. More precisely, we show that there are domains with different volumes for which the first term in
the corresponding asymptotic expansion is the same. To see this, we consider another specific domain, now the $d-$hypercube, for which we obtain the same first term
in the asymptotics for the first eigenvalue (see Theorem~\ref{hrect}).
\begin{thmx}\label{thmD} Let $Q\subseteq\mathbb R^d$ be the hypercube $(-1,1)^d$. Then
$$
\left[\diri{1}{m}(Q)\right]^{\frac{1}{2m}}=\fr{2m}{e}+\bo(\log m), \hspace*{10mm} \mbox{ as } m\to\infty. 
$$
\end{thmx}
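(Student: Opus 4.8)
The plan is to trap $\diri{1}{m}(Q)$ between two quantities whose $(2m)^{\rm th}$ roots both equal $\fr{2m}{e}+\bo(\log m)$. The upper bound is immediate from domain monotonicity: the unit ball $\mathbb B$ satisfies $\mathbb B\subseteq Q$, so extension by zero embeds $H^m_0(\mathbb B)$ into $H^m_0(Q)$ without changing the Rayleigh quotient, and the variational characterisation of the first eigenvalue gives $\diri{1}{m}(Q)\le\diri{1}{m}(\mathbb B)$; Corollary~\ref{normgrowth} then yields
\[
\left[\diri{1}{m}(Q)\right]^{\fr{1}{2m}}\le\fr{2m}{e}+\fr{d}{2e}\log m+\bo(1).
\]
It is worth noting that the reverse inclusion $Q\subseteq\sqrt{d}\,\mathbb B$ only gives $\left[\diri{1}{m}(Q)\right]^{1/(2m)}\ge d^{-1/2}\left[\diri{1}{m}(\mathbb B)\right]^{1/(2m)}$, which loses a factor $\sqrt{d}$ in the leading term and is thus useless for $d\ge2$; the lower bound therefore requires a genuinely different argument exploiting the product structure of $Q$.

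For that lower bound I would proceed as follows. Fix $u\in C_c^\infty(Q)$, extend it by zero to $\mathbb R^d$, and write the quadratic form of $(-\Delta)^m$ on $H^m_0(Q)$ in terms of the Fourier transform. Using the elementary pointwise inequality $|\xi|^{2m}=\big(\sum_{i=1}^d\xi_i^2\big)^m\ge\sum_{i=1}^d\xi_i^{2m}$ (immediate from the multinomial expansion) together with Parseval's identity,
\[
\int_{\mathbb R^d}|\xi|^{2m}|\widehat u(\xi)|^2\,d\xi\ \ge\ \sum_{i=1}^d\int_{\mathbb R^d}\xi_i^{2m}|\widehat u(\xi)|^2\,d\xi\ =\ \sum_{i=1}^d\int_Q\left|\partial_{x_i}^m u\right|^2 dx .
\]
For each $i$, Fubini's theorem and the one-dimensional variational characterisation of $\diri{1}{m}(-1,1)$ — applied to the slice $x_i\mapsto u(x_1,\dots,x_d)$, which lies in $C_c^\infty(-1,1)\subset H^m_0(-1,1)$ for almost every value of the remaining variables — give $\int_Q|\partial_{x_i}^m u|^2\,dx\ge\diri{1}{m}(-1,1)\int_Q|u|^2\,dx$. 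Summing over $i$ and passing to the closure $H^m_0(Q)$ by density yields the clean estimate $\diri{1}{m}(Q)\ge d\,\diri{1}{m}(-1,1)$.

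To finish, I would insert the explicit one-dimensional bound. Since $(-1,1)$ is the unit ball of $\mathbb R^1$, Theorem~\ref{thmA} with $d=1$ gives $\diri{1}{m}(-1,1)\ge 2^{2m}\Gamma(m+1)\Gamma\!\left(m+\tfrac{1}{2}\right)/\Gamma\!\left(\tfrac{1}{2}\right)$, so that
\[
\left[\diri{1}{m}(Q)\right]^{\fr{1}{2m}}\ \ge\ \left(\fr{d}{\Gamma(1/2)}\right)^{\!\fr{1}{2m}}\cdot 2\cdot\left[\Gamma(m+1)\,\Gamma\!\left(m+\tfrac{1}{2}\right)\right]^{\fr{1}{2m}},
\]
and a routine application of Stirling's formula shows that the right-hand side equals $\fr{2m}{e}+\fr{\log m}{2e}+\bo(1)$. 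Combined with the upper bound from the first paragraph, this gives $\left[\diri{1}{m}(Q)\right]^{1/(2m)}=\fr{2m}{e}+\bo(\log m)$. I expect the only point requiring care to be the functional-analytic bookkeeping in the lower bound — justifying the zero-extension, the slicing of $H^m_0$ functions, and the density passage so that $\diri{1}{m}(Q)\ge d\,\diri{1}{m}(-1,1)$ holds on all of $H^m_0(Q)$ and not merely on $C_c^\infty(Q)$; everything else is elementary, the decisive observation being that the crude inequality $|\xi|^{2m}\ge\sum_i\xi_i^{2m}$ transplants the sharp one-dimensional growth to the cube at the negligible cost of the dimensional constant $d$.
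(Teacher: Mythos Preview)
Your argument is correct and follows essentially the same route as the paper's proof (Theorem~\ref{hrect} with $\ab=1$): the upper bound via $\mathbb B\subseteq Q$ is identical, and for the lower bound the paper likewise reduces to the one-dimensional estimate $\diri{1}{m}(-1,1)\ge(2m)!$ (which is exactly your quantity $2^{2m}\Gamma(m+1)\Gamma(m+1/2)/\Gamma(1/2)$) applied coordinate-wise via Fubini, arriving at the same bound $\diri{1}{m}(Q)\ge d\,(2m)!$. The only cosmetic difference is that you isolate the pure derivatives $\partial_{x_i}^m u$ through the Fourier-side inequality $|\xi|^{2m}\ge\sum_i\xi_i^{2m}$, whereas the paper uses the integration-by-parts identity $\int_Q|\nabla^m u|^2=\int_Q|D^m u|^2$, bounds every term $\int_Q|\partial^\alpha u|^2$ by an iterated one-dimensional inequality, and then retains only the pure-index contributions.
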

Note that, not only do the domains $\mathbb B$ and $Q$ have different volumes for $d$ larger than one, but while the volume of the ball of unit radius goes
to zero as the dimension becomes large, that of the hypercube equals $2^d$ and is thus unbounded. On the other hand, it is obvious that rescaling either of
these domains by a factor $\ab$ will change the first term in the above asymptotics to $2m /(e\ab)$, clearly showing that this term is not independent of the domain.

A closer analysis of the asymptotic behaviour of the eigenvalues shows that Theorem~\ref{thmD} is in fact valid for any hyperrectangle where the length of
the smallest side is two (see Theorem~\ref{hrect}). These domains all have the same inradius as $\mathbb B$ and $Q$, suggesting that this might
be the relevant geometric quantity appearing in the first term of the asymptotic expansion. Indeed, it is possible to argue that,  for a general domain and as the order
$m$ increases towards infinity, in principle the eigenfunctions become more and more regular at the boundary, since for any $k\in\mathbb N$ there will always
be an index $\tilde m$ for which the eigenfunctions will be differentiable at least $k$-times for any $m\ge \tilde m$. In addition, eigenfunctions are required to have more
and more derivatives vanishing at the boundary, since they are $H^m_0$-functions. All
these observations could be used to argue that the sequence of eigenvalues $\diri{1}{m}(\Omega)$ is asymptotically highlighting the ``smallest dimension'' of the
domain, i.e., its inradius, since there the eigenfunctions will be ``less free than along longer directions'', in some sense. Based on this we propose the following
\begin{conj*}
 For any domain $\Omega$ of finite measure and with inradius $\ab$ the following asymptotic expansion holds
$$
\left[\diri{1}{m}(\Omega)\right]^{\frac{1}{2m}}=\fr{2m}{e\ab}+\bo(\log(m)), \hspace*{10mm} \mbox{ as } m\to\infty. 
$$
\end{conj*}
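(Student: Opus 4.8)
The statement is equivalent to the two one-sided bounds $\tfrac{2m}{e\ab}-\bo(\log m)\le\bigl[\diri{1}{m}(\Omega)\bigr]^{1/2m}\le\tfrac{2m}{e\ab}+\bo(\log m)$, and only the lower one is problematic. For the upper bound, choose $x_{0}$ with $B_{\ab}(x_{0})\subseteq\Omega$ (possible by definition of the inradius). Extension by zero embeds $H^{m}_{0}(B_{\ab}(x_{0}))$ into $H^{m}_{0}(\Omega)$ preserving both the quadratic form of $(-\Delta)^{m}$ and the $L^{2}$-norm, so the variational characterisation of the fundamental tone gives $\diri{1}{m}(\Omega)\le\diri{1}{m}(B_{\ab}(x_{0}))$; by translation invariance and the scaling $\diri{1}{m}(r\,\cdot)=r^{-2m}\diri{1}{m}(\cdot)$ this equals $\ab^{-2m}\diri{1}{m}(\mathbb B)$, and Corollary~\ref{normgrowth} then yields
\[
\bigl[\diri{1}{m}(\Omega)\bigr]^{1/2m}\ \le\ \frac{1}{\ab}\,\bigl[\diri{1}{m}(\mathbb B)\bigr]^{1/2m}\ =\ \frac{2m}{e\ab}+\frac{d}{2e\ab}\log m+\bo(1),
\]
which is of the required form.

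For the lower bound the natural plan is a localisation (partition of unity) argument. Cover $\mathbb R^{d}$ by axis-parallel cubes $Q_{j}$ of side $2\ab$ placed on a lattice of spacing $\ab$ (so that each point lies in a bounded number of them), and fix cut-offs $\{\phi_{j}\}$ with $\sum_{j}\phi_{j}^{2}\equiv 1$, chosen so that each $\phi_{j}$, together with its derivatives up to order $m-1$, vanishes on $\partial Q_{j}$; for $m$ large this forces $\phi_{j}\in H^{m}_{0}(Q_{j})$, and then $\phi_{j}u\in H^{m}_{0}(Q_{j})$ for any $u\in H^{m}_{0}(\Omega)$. Take $u$ to be a first eigenfunction of~\eqref{dirichlet} with $\|u\|_{L^{2}}=1$ and write $Q[\cdot]$ for the quadratic form associated with~\eqref{dirichlet}, so $\diri{1}{m}(\Omega)=Q[u]$. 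Then $Q[\phi_{j}u]\ge\diri{1}{m}(Q_{j})\,\|\phi_{j}u\|^{2}$, while by scaling and Theorem~\ref{thmD} (for the unit cube) $\diri{1}{m}(Q_{j})=\bigl(\tfrac{2m}{e\ab}+\bo(\log m)\bigr)^{2m}$. Summing over $j$, using $\sum_{j}\|\phi_{j}u\|^{2}=1$ and the exact localisation identity
\[
\sum_{j}Q[\phi_{j}u]\ =\ Q[u]\ +\ \mathcal E(u),
\]
in which $\mathcal E(u)$ gathers the cross terms produced when the operator underlying $Q$ is spread over the products $\phi_{j}u$ by Leibniz's rule, one obtains $\diri{1}{m}(\Omega)\ge\diri{1}{m}(Q_{j})-\mathcal E(u)$. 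Since $\bigl[(1-\delta)\,\diri{1}{m}(Q_{j})\bigr]^{1/2m}=\bigl(1-\bo(m^{-1})\bigr)\bigl[\diri{1}{m}(Q_{j})\bigr]^{1/2m}$, the conjecture would follow as soon as one could show $\mathcal E(u)\le(1-\delta)\,\diri{1}{m}(Q_{j})$ for some fixed $\delta\in(0,1)$.

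Controlling $\mathcal E(u)$ is, I expect, the decisive obstacle. On the Fourier side the sharp interpolation inequality $\sum_{|\beta|=m-p}\|\partial^{\beta}u\|_{2}^{2}\le\diri{1}{m}(\Omega)^{(m-p)/m}$ holds with constant one, so an intermediate derivative of $u$ of order $m-p$ has size $\sim\diri{1}{m}(\Omega)^{(m-p)/2m}$, that is $\sim\bigl(\tfrac{2m}{e\ab}\bigr)^{m-p}$ if the conjectured size is right; on the other hand any admissible cut-off on a cube of side $\sim\ab$ has high-order derivatives that grow rapidly, and for the natural choice --- an eigenfunction-like profile, necessarily oscillating at frequency $\sim m/\ab$ so that $\diri{1}{m}(Q_{j})$ comes out of the right order --- one has $\|D^{p}\phi_{j}\|_{\infty}\sim\bigl(\tfrac{2m}{e\ab}\bigr)^{p}$. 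Hence the $p$-th term of the Leibniz expansion of $\partial^{\beta}(\phi_{j}u)-\phi_{j}\partial^{\beta}u$ has magnitude $\sim\binom{m}{p}\bigl(\tfrac{2m}{e\ab}\bigr)^{m}\sim\binom{m}{p}\,\diri{1}{m}(Q_{j})^{1/2}$, so summing over $p$ gives $\sim 2^{m}\,\diri{1}{m}(Q_{j})^{1/2}$, and a term-by-term estimate only yields $\mathcal E(u)\lesssim 4^{m}\,\diri{1}{m}(Q_{j})$, which is exponentially too large; a scaling heuristic (the same estimate with cubes of side $\to 0$ would force $\diri{1}{m}(\Omega)=+\infty$) shows $\mathcal E(u)$ is in fact generically comparable to $\diri{1}{m}(Q_{j})$, so the naive route is vacuous. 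Making the argument work would require exploiting the genuine cancellation inside the Leibniz sum rather than bounding it term by term, so that $\mathcal E(u)$ only records the thin boundary layer of width $\sim\ab/m$ where $u$ is truly small. An apparently independent route is a higher-order Hardy inequality $Q[u]\ge C_{m,d}\int_{\Omega}\mathrm{dist}(x,\partial\Omega)^{-2m}|u|^{2}\,dx$: since $\mathrm{dist}(x,\partial\Omega)\le\ab$ on $\Omega$, this gives $\diri{1}{m}(\Omega)\ge C_{m,d}\ab^{-2m}$ at once, which together with the upper bound already recovers $\bigl[\diri{1}{m}(\Omega)\bigr]^{1/2m}=\Theta(m/\ab)$ (cf.\ Theorem~\ref{maingeral}); but the sharp one-sided (half-line) Hardy constant only gives $C_{m,d}^{1/2m}\sim m/e$, short of the conjectured $2m/e$ by a factor two, because it registers only the nearest boundary point and not the confinement across an inscribed ball. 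Recovering that factor --- equivalently, replacing $\mathrm{dist}(x,\partial\Omega)$ by a two-sided ``width'' weight valid on arbitrary, possibly non-convex, domains --- is the missing ingredient, which is presumably why the sharp lower bound is recorded here only as a conjecture.
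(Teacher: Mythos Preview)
The statement you address is labelled in the paper as a \emph{Conjecture} and is left open; there is no proof in the paper to compare your attempt against. Your own final sentence correctly anticipates this.

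Your treatment of the upper bound is correct and coincides with the paper's approach: domain monotonicity under inclusion (a ball of radius $\ab$, or $\ab-\varepsilon$ if the inradius is not attained, inside $\Omega$), scaling, and Corollary~\ref{normgrowth} give
\[
\bigl[\diri{1}{m}(\Omega)\bigr]^{1/2m}\ \le\ \fr{1}{\ab}\bigl[\diri{1}{m}(\mathbb B)\bigr]^{1/2m}\ =\ \fr{2m}{e\ab}+\fr{d}{2e\ab}\log m+\bo(1).
\]
This half of the conjecture is therefore established, and the paper implicitly contains the same argument (see the inclusion argument preceding Theorem~\ref{hrect}).

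Your discussion of the lower bound is not a proof but an honest analysis of obstructions, and it is accurate. The IMS-type localisation fails for exactly the reason you identify: for an order-$2m$ operator the Leibniz commutators are of the same size as the main term, so termwise estimates are vacuous. The polyharmonic Hardy inequality with the half-line constant $\bigl((2m-1)!!/2^{m}\bigr)^{2}$ gives only $[\diri{1}{m}(\Omega)]^{1/2m}\gtrsim m/(e\ab)$, short by a factor of two. The paper's own general lower bound proceeds instead via the circumscribed ball and yields $[\diri{1}{m}(\Omega)]^{1/2m}\ge\tfrac{2m}{e\varrho_{e}}+\bo(\log m)$ with $\varrho_{e}$ the circumradius, which has the right constant $2/e$ but the wrong geometric quantity. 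Neither route reaches the conjecture, and the paper does not claim otherwise.
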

An interesting family of domains where to test such a claim would be annuli and spherical shells, where the spectrum can be characterized somewhat explicitly by
means of Bessel functions. However, in that case the difficulty lies in the identification of the first eigenvalue, and its dependence on $m$ as it goes to infinity
(cf.\ \cite{buosoparini}).

Another set of natural questions related to the eigenvalue problem~\eqref{dirichletbuckling} is how the different eigenvalues are related between themselves
for different values of the order $k$ of the eigenvalue and the parameters $m$ and $t$. Inequalities for the cases where $m=1,2$ have been studied by several authors
and are by now classical (cf.\ \cite{blps,liuineq}). Here we consider the more general case where $m$ and $t$ are positive integers with $t\leq m$,
showing that there is a hierarchy between these eigenvalues and leading to a number of interesting relations depending on how these indices vary. In fact, this
turns out to be one of the ingredients we use to obtain Theorem~\ref{thmA} (see Section~\ref{sec.ineq}).
\begin{thmx}\label{thmE}
For any domain $\Omega$ in $\mathbb R^d$ of finite measure and $k,t,s,m\in\mathbb N$ such that $t\le s\le m$, the following inequalities hold:
 
 \begin{itemize}
 \item[i.] $\left[\diri{k}{m,t}(\Omega)\right]^{1/t}\leq\left[\diri{k}{m+1,t+1}(\Omega)\right]^{1/(t+1)}$\vspace*{2mm}
 \item[ii.] $\diri{k}{m,t}(\Omega)\leq\diri{k}{m+1,t}(\Omega)$\vspace*{2mm}
 \item[iii.] $\left[\diri{k}{m,s}(\Omega)\right]^{1/s}\le \left[\diri{k}{m,t}(\Omega)\right]^{1/t}$\vspace*{2mm}
 \end{itemize}
\end{thmx}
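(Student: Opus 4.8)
The plan is to deduce all three inequalities from one mechanism: the Courant--Fischer characterisation of $\diri{k}{m,t}(\Omega)$ together with the logarithmic convexity of the ``intermediate energies'' of an admissible function. For $u\in H^m_0(\Omega)$ and an integer $0\le j\le m$, let $A_j[u]$ be the quadratic form $\int_\Omega|\Delta^{j/2}u|^2$ when $j$ is even and $\int_\Omega|\nabla\Delta^{(j-1)/2}u|^2$ when $j$ is odd --- that is, the form obtained by integrating $\int_\Omega u\,(-\Delta)^j u$ by parts. Each $A_j$ is a nonnegative form on $H^m_0(\Omega)$, and $A_{m-t}$ vanishes only at $u=0$ (when $t<m$ because then $H^m_0(\Omega)\subset H^1_0(\Omega)$, and trivially when $t=m$). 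Testing the weak formulation of~\eqref{dirichletbuckling} against $H^m_0(\Omega)$, and using that the spectrum is discrete so the minimum below is attained (see~\cite{bula}), one gets
\[
\diri{k}{m,t}(\Omega)=\min_{\substack{V\subset H^m_0(\Omega)\\ \dim V=k}}\ \max_{u\in V\setminus\{0\}}\ \frac{A_m[u]}{A_{m-t}[u]}.
\]

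Next I would record the convexity fact: for every $u$ in $H^m_0(\Omega)$ --- or $H^{m+1}_0(\Omega)$ when that order is involved --- the map $j\mapsto\log A_j[u]$ is convex on the integers $0,1,\dots,m$ (resp.\ $0,\dots,m+1$); equivalently, for each fixed integer $a\ge0$ the ratio $A_{j+a}[u]/A_j[u]$ is non-decreasing in $j$. By density it is enough to check this for $u\in C^\infty_c(\Omega)$, where extension by zero gives $A_j[u]=\int_{\mathbb R^d}|\xi|^{2j}\,|\widehat u(\xi)|^2\,d\xi$ and convexity of $j\mapsto\log A_j[u]$ is exactly H\"older's inequality for the measure $|\widehat u|^2\,d\xi$ (alternatively one can use the spectral resolution of the Dirichlet Laplacian and the log-convexity of its moments). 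The resulting homogeneous inequalities, such as $A_m[u]^2\le A_{m-1}[u]\,A_{m+1}[u]$, and more generally $A_b[u]\le A_a[u]^{1-\theta}A_c[u]^{\theta}$ for $b=(1-\theta)a+\theta c$, are continuous in $u$ for the $H^m_0$-norm and therefore hold on all of $H^m_0(\Omega)$.

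With these two ingredients, each inequality amounts to choosing the convex combination that matches the parameter shift. For (ii), monotonicity of $A_{j+t}[u]/A_j[u]$ gives $A_{m+1}[u]/A_{m+1-t}[u]\ge A_m[u]/A_{m-t}[u]$ for $u\in H^{m+1}_0(\Omega)$; since $H^{m+1}_0(\Omega)\subset H^m_0(\Omega)$, every $k$-dimensional $V\subset H^{m+1}_0(\Omega)$ is admissible for the $(m,t)$-problem and $\max_{u\in V}A_m[u]/A_{m-t}[u]\le\max_{u\in V}A_{m+1}[u]/A_{m+1-t}[u]$, so minimising over such $V$ gives $\diri{k}{m,t}(\Omega)\le\diri{k}{m+1,t}(\Omega)$. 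For (i), note $m+1-(t+1)=m-t$, so the denominator form is unchanged; the identity $m=\tfrac1{t+1}(m-t)+\tfrac{t}{t+1}(m+1)$ gives $A_m[u]\le A_{m-t}[u]^{1/(t+1)}A_{m+1}[u]^{t/(t+1)}$, i.e.\ $A_m[u]/A_{m-t}[u]\le\bigl(A_{m+1}[u]/A_{m-t}[u]\bigr)^{t/(t+1)}$ on $H^{m+1}_0(\Omega)$; evaluating on the span $V^\ast$ of the first $k$ eigenfunctions of the $(m+1,t+1)$-problem (an admissible $k$-dimensional subspace of $H^{m+1}_0(\Omega)\subset H^m_0(\Omega)$), where $A_{m+1}[u]/A_{m-t}[u]\le\diri{k}{m+1,t+1}(\Omega)$, gives $\diri{k}{m,t}(\Omega)\le\bigl(\diri{k}{m+1,t+1}(\Omega)\bigr)^{t/(t+1)}$, which is (i). For (iii), the identity $m-t=\tfrac ts(m-s)+\tfrac{s-t}{s}m$ gives $A_{m-t}[u]\le A_{m-s}[u]^{t/s}A_m[u]^{(s-t)/s}$, i.e.\ $A_m[u]/A_{m-s}[u]\le\bigl(A_m[u]/A_{m-t}[u]\bigr)^{s/t}$ on $H^m_0(\Omega)$; evaluating on the span of the first $k$ eigenfunctions of the $(m,t)$-problem gives $\diri{k}{m,s}(\Omega)\le\bigl(\diri{k}{m,t}(\Omega)\bigr)^{s/t}$, which is (iii) after taking $s$-th roots.

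I do not expect a genuine obstacle here. The two points needing care are (a) the bookkeeping in the variational characterisation --- checking that integrating the weak form of~\eqref{dirichletbuckling} by parts produces exactly the pair $A_m$, $A_{m-t}$, the Courant--Fischer principle being legitimate since the spectrum is discrete --- and (b) upgrading the logarithmic convexity from $C^\infty_c(\Omega)$ to $H^m_0(\Omega)$, which is the density argument sketched above. Everything else is the elementary algebra of the convex combinations, and one can sanity-check the three statements against the classical comparisons between Dirichlet Laplacian, clamped-plate and buckling eigenvalues recovered when $m\le 2$.
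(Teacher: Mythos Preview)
Your proposal is correct, and the overall architecture coincides with the paper's: both arguments combine the Courant--Fischer characterisation~\eqref{rayleighbuckling} with interpolation inequalities for the intermediate energies $A_j[u]=\int_\Omega|\nabla^j u|^2$, and both observe that the minimax behaves well under monotone transformations and under restriction from $H^{m+1}_0(\Omega)$ to $H^m_0(\Omega)$.

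The genuine difference lies in how the interpolation inequalities are obtained. The paper starts from the single Cauchy--Schwarz step $A_j[u]^2\le A_{j-1}[u]\,A_{j+1}[u]$ (inequality~\eqref{basic}) and bootstraps it by several layers of induction: one induction for inequality~{\it i}, a telescoping product for~{\it ii}, and a five-step double induction on $(t,s,m)$ for the Gagliardo--Nirenberg-type inequality (Theorem~\ref{gntineqthm}) underlying~{\it iii}. You instead prove the full log-convexity of $j\mapsto\log A_j[u]$ in one stroke, via the Plancherel identity $A_j[u]=\int_{\mathbb R^d}|\xi|^{2j}|\widehat u(\xi)|^2\,d\xi$ for $u\in C^\infty_c(\Omega)$ and H\"older's inequality for the measure $|\widehat u|^2\,d\xi$; the general interpolation $A_b\le A_a^{1-\theta}A_c^\theta$ for $b=(1-\theta)a+\theta c$ then drops out immediately and passes to $H^m_0(\Omega)$ by density. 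This is a real simplification, especially for~{\it iii}, where your single H\"older step replaces the paper's five-step induction. What the paper's approach buys in return is that it stays entirely on $\Omega$ and uses nothing beyond integration by parts and Cauchy--Schwarz, so it transports verbatim to settings (Riemannian manifolds with boundary, weighted problems) where the Euclidean Fourier transform is unavailable; your spectral-resolution alternative would cover those cases as well.
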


\begin{remark}
Note that, in view of Friedlander's inequality $\lambda_1\ge\mu_2$ \cite{fried}, inequality {\rm i.} in Theorem~\ref{thmE} is in fact a generalization (and an improvement) of the bound for $\diri{1}{m}(\Omega)$
provided in~\cite{puccserr} -- see Remark~\ref{psrem} 
and inequality~\eqref{secondinequality} with $t=m$. We highlight the fact that
the bound of~\cite{puccserr} is obtained by iterating Poincar\'e-type inequalities involving only Laplacian eigenvalues, while our technique
employs a direct manipulation of the Rayleigh quotients by means of the Cauchy--Schwarz inequality.
\end{remark}

In particular, one classical inequality we consider is the following, originally due to Payne~\cite{payne},
which in our notation reads as
\begin{equation}
\label{payneineq11}
\diri{2}{1,1}(\Omega)\le \diri{1}{2,1}(\Omega).
\end{equation}
Generalisations of this inequality have been considered in several directions, both with respect to higher order eigenvalues (cf.\ \cite{blps,fried04,friedp,liup}) and in
terms of related operators (cf.\ \cite{ilsho,kell}). Here we provide a version that considers polyharmonic eigenvalues of the type \eqref{dirichletbuckling}.
As for higher order eigenvalues, the analogue of inequality \eqref{payneineq11} is known not to hold for intervals in $\mathbb R$ (see \cite[Section 4]{blps}). We provide here an alternative proof that also encompasses the case $m\ge2$.

The analysis of problem~\eqref{dirichletbuckling} is complemented with the full characterization of its eigenvalues and eigenfunctions on the ball. For this
specific domain the eigenfunctions are known to be decomposable into a radial part, consisting of a combination of Bessel functions and (poly)harmonic polynomials,
and a spherical harmonic function. This allows us to write explicitly the defining equations for the eigenvalues
for any choice of $m$ and $t$, including the dependence of these equations
on the degree $\ell\in\mathbb N_0$ of the spherical (poly)harmonic functions. As an immediate consequence we observe that, when $t=1$, the
eigenvalues are (squares of) zeroes of Bessel functions, just like for the Dirichlet Laplacian (see Corollary~\ref{buckevs}). On the other hand, for $t>1$ the
equation becomes more involved, and it is not clear a priori if there are infinitely many solutions for any $\ell$, nor how such values interlace. 
In the particular case where $t=m$, it is also known that $\diri{1}{m}(\mathbb B)$ is simple and the associated eigenfunction may be chosen to be positive, while
for $t<m$ the simplicity of the first eigenvalue remains an open problem.

We should also remark that this behaviour is different from what happens in the case of the hypercube $Q$ mentioned above, for which no separation of variables is
possible (except for the Laplacian) and any eigenfunction will instead necessarily be sign-changing, making the corresponding
analysis more difficult to be carried out (cf.\ ~\cite{kkm}; see also \cite{buofre} for considerations on isoperimetric inequalities on rectangles).

The structure of the paper is as follows. In Section~\ref{sec.ineq} we state and prove several general inequalities for different combinations of the parameters
$m$ and $t$ and the order of the eigenvalues. In Section~\ref{genasympt} we provide upper and lower bounds for $\diri{1}{m,t}(\mathbb{B})$ which allow us to study its asymptotic behaviour with respect to the differential orders involved and, in particular, prove some of the main results above. The key ingredients in this process are the choice of an appropriate family of test functions and the consideration of an auxiliary eigenvalue problem. Section~\ref{payneineq} is dedicated to Payne's inequality \eqref{payneineq11} and its generalization for problem \eqref{dirichletbuckling}. Finally, Appendix~\ref{ballsection} contains a systematic study of the eigenfunctions and eigenvalues of the ball, while Appendix~\ref{techcompt} contains various technical lemmas.

\section{General inequalities}
\label{sec.ineq}

In this section we provide a number of inequalities relating the eigenvalues $\diri{k}{m,t}$ for different values of $m$ and $t$. In particular,
Theorems~\ref{increasing},~\ref{sameorder} and~\ref{upperthm} contain the proofs of the three inequalities in Theorem~\ref{thmE}.

To this end, we recall that the weak formulation of the problem defined by~\eqref{dirichletbuckling} is given by
\begin{equation}
\label{first}
\int_{\Omega}D^m u: D^m\phi=\diri{ }{m,t}\int_\Omega D^{m-t} u: D^{m-t}\phi,\ \forall \phi\in H^m_0(\Omega),
\end{equation}
where $D^m u: D^m\phi$ denotes the Frobenius product (we recall that $\alpha !=\alpha_1 !\cdot\alpha_2 !\cdots\alpha_d!$)
$$
D^m u: D^m\phi=\sum_{|\alpha|=m}\frac{m!}{\alpha !}\fr{\partial^\alpha u}{\partial x^\alpha}\fr{\partial^\alpha \phi}{\partial x^\alpha}.
$$
Note that equation \eqref{first} is associated with a self-adjoint operator with compact resolvent for any domain $\Omega$ of finite measure in $\mathbb R^d$,
so that its spectrum is discrete and consists of a diverging sequence of positive eigenvalues of finite multiplicities. In particular, the eigenfunctions form an orthogonal basis for both $H^m_0(\Omega)$ and $H^{m-t}_0(\Omega)$. Moreover, if we set
\begin{equation*}
\nabla^m \phi=
\left\{\begin{array}{ll}
\Delta^k\phi,&\text{if\ }m=2k,\eqskip
\nabla\Delta^k\phi,&\text{if\ }m=2k+1,
\end{array}\right.
\end{equation*}
then, using the fact that the ambient space is $H^m_0(\Omega)$, we have that equation \eqref{first} is equivalent to
\begin{equation*}
\int_{\Omega}\nabla^m u\cdot \nabla^m\phi=\diri{}{m,t}\int_\Omega \nabla^{m-t}u\cdot\nabla^{m-t}\phi,\ \forall \phi\in H^m_0(\Omega).
\end{equation*}
In this notation, the index $m$ represents the highest differential order appearing in the operator, while the index $t$ is the difference between the differential
orders of the two operators involved. Because of this, $t$ is the relevant index to be considered when studying the asymptotic properties of eigenvalues,
as may be seen in the asymptotic expression~\eqref{weylmt}. For this reason, we will refer to the index $t$ as half the Weyl order of the eigenvalues,
while the index $m$ will be referred to as half the differential order.

We also recall that, for any positive integers $m$ and $t$ with $1\le t\le m$, the sequence of eigenvalues of problem~\eqref{dirichletbuckling}
may be characterized variationally as follows
\begin{equation}\label{rayleighbuckling}
\diri{k}{m,t}(\Omega)=\min_{\substack{V\subset H^m_0(\Omega)\\ \dim V=k}}\ \max_{0\neq u\in V}\fr{\int_\Omega |\nabla^m u|^2}{\int_\Omega |\nabla^{m-t}	u|^2}.
\end{equation}

In addition, we note that the characterization \eqref{rayleighbuckling} yields the following rescaling property
\begin{equation}
\label{scaling}
\lambda_k^{(m,t)}(\varrho\Omega)=\varrho^{-2t}\lambda_k^{(m,t)}(\Omega),
\end{equation}
for any $\varrho>0$.

Throughout, we will make extensive use of the following inequality 
\begin{equation}
\label{basic}
\int_\Omega |\nabla^m u|^2=\left|\int_\Omega\nabla^{m-1}u\cdot\nabla^{m+1}u\right|\leq\left(\int_\Omega|\nabla^{m-1}u|^2\right)^{\frac{1}{2}}
\left(\int_\Omega|\nabla^{m+1}u|^2\right)^{\frac{1}{2}},
\end{equation}
which is a direct consequence from the Cauchy--Schwarz inequality and is valid for any function $u\in H^{m+1}(\Omega)\cap H^m_0(\Omega)$.


We can now show that the eigenvalues are monotone with respect to the order $t$.
\begin{thm}
\label{increasing}
For any $k,t,m \in\mathbb N$, $t\le m\in\mathbb N$, and for any domain $\Omega\in\mathbb R^d$ of finite measure,
\begin{equation}
\label{secondinequality}
\left[\diri{k}{m,t}(\Omega)\right]^{\fr{1}{t}}\leq\left[\diri{k}{m+1,t+1}(\Omega)\right]^{\fr{1}{t+1}}.
\end{equation}
\end{thm}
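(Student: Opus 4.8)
The plan is to combine the min--max characterisation \eqref{rayleighbuckling} with the log-convexity of the sequence $j\mapsto\int_\Omega|\nabla^j u|^2$ that is encoded in \eqref{basic}. The key structural remark is that the Rayleigh quotients defining $\diri{k}{m,t}$ and $\diri{k}{m+1,t+1}$ have the \emph{same} denominator $\int_\Omega|\nabla^{m-t}u|^2$, so the whole inequality reduces to a pointwise-in-$u$ comparison of the numerators $\int_\Omega|\nabla^m u|^2$ and $\int_\Omega|\nabla^{m+1}u|^2$.

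First I would fix $u\in H^{m+1}_0(\Omega)\setminus\{0\}$ and set $a_j:=\int_\Omega|\nabla^j u|^2$ for $0\le j\le m+1$. Applying \eqref{basic} with $m$ replaced by each $j\in\{1,\dots,m\}$ gives $a_j^{\,2}\le a_{j-1}a_{j+1}$, i.e.\ the sequence $(a_j)_{j=0}^{m+1}$ is log-convex; hence the consecutive log-increments $d_i:=\log a_{m-t+i}-\log a_{m-t+i-1}$, $i=1,\dots,t+1$, form a nondecreasing sequence. In particular $d_i\le d_{t+1}$ for $i\le t$, so $\sum_{i=1}^t d_i\le t\,d_{t+1}$; adding $t\sum_{i=1}^t d_i$ to both sides, dividing by $t(t+1)$ and exponentiating turns this elementary inequality into
\[
\left(\frac{a_m}{a_{m-t}}\right)^{1/t}\le\left(\frac{a_{m+1}}{a_{m-t}}\right)^{1/(t+1)},
\]
which is precisely $R_{m,t}(u)^{1/t}\le R_{m+1,t+1}(u)^{1/(t+1)}$, where $R_{m,t}$ denotes the Rayleigh quotient appearing in \eqref{rayleighbuckling}.

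It then remains to insert this into the min--max. I would take $V^\ast$ to be the span of the first $k$ eigenfunctions of problem \eqref{dirichletbuckling} with parameters $(m+1,t+1)$; it is a $k$-dimensional subspace of $H^{m+1}_0(\Omega)\subset H^m_0(\Omega)$, hence admissible in \eqref{rayleighbuckling} for $\diri{k}{m,t}$, and by construction $\max_{0\neq u\in V^\ast}R_{m+1,t+1}(u)=\diri{k}{m+1,t+1}(\Omega)$. Using the pointwise bound together with the monotonicity of $x\mapsto x^{t/(t+1)}$ on $(0,\infty)$ yields
\[
\diri{k}{m,t}(\Omega)\le\max_{0\neq u\in V^\ast}R_{m,t}(u)\le\left(\max_{0\neq u\in V^\ast}R_{m+1,t+1}(u)\right)^{t/(t+1)}=\left(\diri{k}{m+1,t+1}(\Omega)\right)^{t/(t+1)},
\]
which is \eqref{secondinequality}.

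The only point requiring a little care is the regularity bookkeeping: applying \eqref{basic} at every intermediate order needs the test function to lie in $H^{j+1}(\Omega)\cap H^j_0(\Omega)$ for all $j\in\{1,\dots,m\}$, which is exactly why it is convenient to run the min--max with the eigenfunctions of the higher-order problem (automatically in $H^{m+1}_0(\Omega)$) rather than with an arbitrary near-minimising subspace. Everything else is the convexity manipulation above, so I do not anticipate a genuine obstacle.
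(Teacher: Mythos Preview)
Your proof is correct and follows essentially the same route as the paper: both arguments use~\eqref{basic} to obtain the pointwise-in-$u$ inequality $\bigl(a_m/a_{m-t}\bigr)^{1/t}\le\bigl(a_{m+1}/a_{m-t}\bigr)^{1/(t+1)}$ and then feed it into the min--max~\eqref{rayleighbuckling}. The only cosmetic differences are that the paper proves the pointwise bound by induction on $m$ rather than by the explicit log-convexity bookkeeping you wrote out, and that the paper takes the infimum over all $k$-dimensional subspaces of $H^{m+1}_0(\Omega)$ instead of choosing the eigenfunction span $V^\ast$; your choice of a concrete test space is in fact slightly cleaner, since it sidesteps the need to note that restricting the infimum to $H^{m+1}_0(\Omega)\subset H^m_0(\Omega)$ still recovers $\diri{k}{m,t}(\Omega)$.
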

%

\begin{proof}
We will first show that, for a fixed $h\le m$, a function $u\in H^{m+1}(\Omega)\cap H^m_0(\Omega)$ satisfies
\begin{equation}
\label{firststep}
\left(\fr{\int_\Omega |\nabla^m u|^2}{\int_\Omega |\nabla^{h} u|^2}\right)^{\fr 1 {m-h}}\leq\left(\fr{\int_\Omega|\nabla^{m+1}u|^2}{\int_\Omega |\nabla^{h} u|^2}\right)^{\fr 1 {m-h+1}}.
\end{equation}
From inequality~\eqref{basic} we have
\begin{equation}
\label{firststep1}
\fr{\int_\Omega |\nabla^m u|^2}{\int_\Omega |\nabla^{h} u|^2}\leq\left(\fr{\int_\Omega|\nabla^{m-1}u|^2}{\int_\Omega |\nabla^{h} u|^2}\right)^{\fr 1 2}
\left(\fr{\int_\Omega|\nabla^{m+1}u|^2}{\int_\Omega |\nabla^{h} u|^2}\right)^{\fr 1 2},
\end{equation}
so that when $m-1=h$ claim~\eqref{firststep} follows. Otherwise, arguing by induction on $m$, inequality~\eqref{firststep1} yields
\begin{equation*}
\fr{\int_\Omega |\nabla^m u|^2}{\int_\Omega |\nabla^{h} u|^2}\leq\left(\fr{\int_\Omega|\nabla^{m}u|^2}{\int_\Omega |\nabla^{h} u|^2}\right)^{\fr{m-h-1}{2m-2h}}
\left(\fr{\int_\Omega|\nabla^{m+1}u|^2}{\int_\Omega |\nabla^{h} u|^2}\right)^{\frac 1 2},
\end{equation*}
which again implies \eqref{firststep}.

It is clear that inequality \eqref{firststep} yields
\begin{equation}
\label{onlymax}
\max_V\left(\fr{\int_\Omega |\nabla^m u|^2}{\int_\Omega |\nabla^hu|^2}\right)^{\fr 1 {m-h}}\leq\max_V\left(\fr{\int_\Omega|\nabla^{m+1}u|^2}
{\int_\Omega |\nabla^hu|^2}\right)^{\fr 1 {m-h+1}},
\end{equation}
for any subspace $V$ of $H^{m+1}_0(\Omega)$ (which is a subspace of $H^{m+1}(\Omega)\cap H^m_0(\Omega)$) of given finite dimension. Taking now the infimum of~\eqref{onlymax}
over subspaces of dimension $k$ and substituting $h=m-t$ gives \eqref{secondinequality}, since the exponents can be taken out of the minimax.
\end{proof}

As already stated, Theorem~\ref{increasing} deals with chains of eigenvalues where the Weyl order increases. If instead we consider eigenvalues with the
same Weyl order but where the differential order increases, we obtain the following

\begin{thm}
\label{sameorder}
For any $k,t,m\in\mathbb N$, $t\le m\in\mathbb N$, and for any domain $\Omega\in\mathbb R^d$ of finite measure,
\begin{equation*}
\diri{k}{m,t}(\Omega)\leq\diri{k}{m+1,t}(\Omega).
\end{equation*}
\end{thm}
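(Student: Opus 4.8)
The plan is to follow the scheme of the proof of Theorem~\ref{increasing}: first establish a pointwise comparison between the two Rayleigh quotients on a common function space, then insert it into the minimax characterization~\eqref{rayleighbuckling}. The pointwise statement to aim for is that, for every $u\in H^{m+1}_0(\Omega)$ with $u\neq0$,
\[
\fr{\int_\Omega|\nabla^m u|^2}{\int_\Omega|\nabla^{m-t}u|^2} \leq \fr{\int_\Omega|\nabla^{m+1}u|^2}{\int_\Omega|\nabla^{m+1-t}u|^2};
\]
note that, in contrast with~\eqref{firststep}, the two quotients now carry the same exponent but have different denominators. The key tool is once more the log-convexity of the sequence $a_j:=\int_\Omega|\nabla^j u|^2$, $j=0,1,2,\dots$, which is encoded in~\eqref{basic} as $a_j^2\le a_{j-1}a_{j+1}$, valid whenever $u\in H^{j+1}(\Omega)\cap H^j_0(\Omega)$.

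To prove the pointwise inequality I would fix $u\in H^{m+1}_0(\Omega)$, $u\neq0$. Since $\Omega$ has finite measure, $\int_\Omega|\nabla^j v|^2$ is a norm on $H^j_0(\Omega)$ (equivalently, $\diri{1}{j}(\Omega)>0$); as $H^{m+1}_0(\Omega)\subset H^j_0(\Omega)$ for $j\le m+1$, the numbers $a_0,\dots,a_{m+1}$ are all strictly positive and we may divide freely. Applying~\eqref{basic} with $m$ replaced by each $j\in\{1,\dots,m\}$ — which is licit because then $u\in H^{j+1}(\Omega)\cap H^j_0(\Omega)$ — gives $a_j/a_{j-1}\le a_{j+1}/a_j$, hence the chain $a_1/a_0\le a_2/a_1\le\cdots\le a_{m+1}/a_m$ of non-decreasing consecutive ratios. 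Since $t\ge1$ we have $m-t\le m-1$, so $a_{m-t+1}/a_{m-t}\le a_{m+1}/a_m$; cross-multiplying and rearranging yields $a_m\,a_{m+1-t}\le a_{m+1}\,a_{m-t}$, which is exactly the desired pointwise inequality.

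To finish, I would feed this into~\eqref{rayleighbuckling}. Given any $k$-dimensional subspace $V\subset H^{m+1}_0(\Omega)$, the pointwise inequality yields
\[
\max_{0\neq u\in V}\fr{\int_\Omega|\nabla^m u|^2}{\int_\Omega|\nabla^{m-t}u|^2} \leq \max_{0\neq u\in V}\fr{\int_\Omega|\nabla^{m+1}u|^2}{\int_\Omega|\nabla^{m+1-t}u|^2}.
\]
Since $V\subset H^{m+1}_0(\Omega)\subset H^m_0(\Omega)$, the subspace $V$ is an admissible competitor in the minimax defining $\diri{k}{m,t}(\Omega)$, so the left-hand side is bounded below by $\diri{k}{m,t}(\Omega)$; taking the infimum of the right-hand side over all such $V$ yields $\diri{k}{m+1,t}(\Omega)$, and the asserted inequality follows. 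The argument is essentially bookkeeping once the log-convexity structure is in place; the one subtlety is the admissibility step — one must observe that shrinking the family of trial subspaces from $H^m_0(\Omega)$ down to $H^{m+1}_0(\Omega)$ can only increase the $k$-th minimax value, which is precisely what lets the pointwise comparison close the argument — and, more pedantically, that~\eqref{basic} applies at each link of the chain. I do not anticipate any substantive obstacle.
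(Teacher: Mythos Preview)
Your proposal is correct and follows essentially the same approach as the paper: both use the log-convexity inequality~\eqref{basic} to establish the pointwise comparison of Rayleigh quotients (the paper calls it \eqref{firststepsame1} and iterates it telescopically, while you observe directly that monotone consecutive ratios give $a_{m-t+1}/a_{m-t}\le a_{m+1}/a_m$), and then both close via the minimax~\eqref{rayleighbuckling} over subspaces of $H^{m+1}_0(\Omega)$. Your treatment of the positivity of the $a_j$ and the admissibility of the trial subspaces is slightly more explicit than the paper's, but the arguments are the same in substance.
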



\begin{proof}
From \eqref{basic} we get that a function $u\in H^{m+1}(\Omega)\cap H^m_0(\Omega)$ satisfies
\begin{equation}
\label{firststepsame1}
\fr{\int_\Omega |\nabla^h u|^2}{\int_\Omega |\nabla^{h-1} u|^2}\leq\fr{\int_\Omega|\nabla^{h+1}u|^2}{\int_\Omega |\nabla^h u|^2},
\end{equation}
for any $h\le m$. By iterating inequality \eqref{firststepsame1} for $m-t+1\le h\le m$ we obtain
\begin{equation*}
\fr{\int_\Omega |\nabla^m u|^2}{\int_\Omega |\nabla^{m-t} u|^2}\leq\fr{\int_\Omega|\nabla^{m+1}u|^2}{\int_\Omega |\nabla^{m-t+1} u|^2}.
\end{equation*}
The proof of the theorem can now be done following that of Theorem \ref{increasing}.
\end{proof}

We have now a generalization of a claim made by Payne~\cite[formula (63)]{payne}.

\begin{thm}
\label{lower}
For any $k,t,m\in\mathbb N$, $t\le m\in\mathbb N$, and for any domain $\Omega\in\mathbb R^d$ of finite measure,
\begin{equation}
\label{generalizedpayne}
\diri{k}{m,t}(\Omega)\ge\diri{k}{m,1}(\Omega)\dprod_{h=m-t+1}^{m-1}\diri{1}{h,1}(\Omega). 
\end{equation}
\end{thm}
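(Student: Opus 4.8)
The plan is to run an induction on $t$, using the Cauchy--Schwarz inequality~\eqref{basic} once at each step to peel off one factor. The base case $t=1$ is the trivial identity $\diri{k}{m,1}(\Omega)\ge\diri{k}{m,1}(\Omega)$, the empty product being $1$. For the inductive step, the key is to relate the Rayleigh quotient for $\diri{k}{m,t}$ to that for $\diri{k}{m,t-1}$ times a one-parameter factor. The natural route: for $u\in H^m_0(\Omega)$, write
\[
\fr{\int_\Omega|\nabla^m u|^2}{\int_\Omega|\nabla^{m-t}u|^2}
=\fr{\int_\Omega|\nabla^m u|^2}{\int_\Omega|\nabla^{m-t+1}u|^2}\cdot
\fr{\int_\Omega|\nabla^{m-t+1}u|^2}{\int_\Omega|\nabla^{m-t}u|^2}.
\]
The first factor is exactly a Rayleigh quotient for the problem with indices $(m,t-1)$, and by~\eqref{basic} applied with the pair $(\nabla^{m-t},\nabla^{m-t+2})$ — or more directly by iterating~\eqref{firststepsame1} — the second factor is bounded below by the Rayleigh quotient $\int_\Omega|\nabla^{m-t+1}u|^2/\int_\Omega|\nabla^{m-t}u|^2$ evaluated at a sl>ightly different configuration, which I want to tie to $\diri{1}{m-t+1,1}(\Omega)$.

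More precisely, I would first establish the pointwise (in $u$) lower bound
\[
\fr{\int_\Omega|\nabla^h u|^2}{\int_\Omega|\nabla^{h-1}u|^2}\ge\diri{1}{h,1}(\Omega),
\qquad u\in H^m_0(\Omega)\setminus\{0\},\ 1\le h\le m,
\]
which is immediate from the variational characterisation~\eqref{rayleighbuckling} with $k=1$, since $u\in H^m_0(\Omega)\subseteq H^h_0(\Omega)$ is an admissible test function for $\diri{1}{h,1}(\Omega)$ and that eigenvalue is the infimum of $\int_\Omega|\nabla^h v|^2/\int_\Omega|\nabla^{h-1}v|^2$ over $v\in H^h_0(\Omega)$. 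Applying this with $h=m-t+1$ bounds the second factor above by $\diri{1}{m-t+1,1}(\Omega)$ from below, uniformly in $u$. Hence for every $k$-dimensional subspace $V\subseteq H^m_0(\Omega)$,
\[
\max_{0\ne u\in V}\fr{\int_\Omega|\nabla^m u|^2}{\int_\Omega|\nabla^{m-t}u|^2}
\ge\diri{1}{m-t+1,1}(\Omega)\cdot\max_{0\ne u\in V}\fr{\int_\Omega|\nabla^m u|^2}{\int_\Omega|\nabla^{m-t+1}u|^2},
\]
and taking the infimum over such $V$ gives $\diri{k}{m,t}(\Omega)\ge\diri{1}{m-t+1,1}(\Omega)\,\diri{k}{m,t-1}(\Omega)$. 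Iterating this recursion from $t$ down to $1$ produces the product $\prod_{h=m-t+1}^{m-1}\diri{1}{h,1}(\Omega)$ together with the leftover $\diri{k}{m,1}(\Omega)$, which is~\eqref{generalizedpayne}.

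The only delicate point is the bookkeeping of the factor pulled out: each descent replaces the denominator $\nabla^{m-s}$ by $\nabla^{m-s+1}$ and contributes $\diri{1}{m-s+1,1}(\Omega)$, so after $t-1$ descents one has accumulated $\prod_{s=2}^{t}\diri{1}{m-s+1,1}(\Omega)=\prod_{h=m-t+1}^{m-1}\diri{1}{h,1}(\Omega)$ and is left with the $(m,1)$-quotient, matching the claimed indices. I expect this index-chasing — and making sure the inequality survives passage through the min--max (which is fine, since at each stage the extracted constant $\diri{1}{m-t+1,1}(\Omega)$ is independent of $u$ and $V$, so it factors cleanly out of both the $\max$ and the $\min$) — to be the main, though entirely routine, thing to get right.
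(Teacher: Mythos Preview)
Your argument is correct and is essentially the paper's proof: split the Rayleigh quotient as a telescoping product of ratios $\int|\nabla^h u|^2/\int|\nabla^{h-1}u|^2$, bound each ratio (except the top one) below by $\diri{1}{h,1}(\Omega)$ via the variational characterisation, and pass through the min--max. The paper simply does the full telescoping in one line and applies the min--max once at the end, whereas you package the same computation as an induction on~$t$; and your initial references to~\eqref{basic} and~\eqref{firststepsame1} are red herrings you rightly abandon---no Cauchy--Schwarz is needed here.
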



\begin{proof}
It suffices to observe that the Rayleigh quotient of $\diri{k}{m,t}(\Omega)$ can be split as follows 
$$
\fr{\int_\Omega|\nabla^m u|^2}{\int_\Omega |\nabla^{m-t} u|^2}=\dprod_{h=m-t+1}^{m}\fr{\int_\Omega|\nabla^h u|^2}{\int_\Omega |\nabla^{h-1}u|^2}
\ge  \fr{\int_\Omega|\nabla^m u|^2}{\int_\Omega |\nabla^{m-1}u|^2}\dprod_{h=m-t+1}^{m-1}\diri{1}{h,1}(\Omega),
$$
where in the last inequality we used the fact that
$$
\diri{1}{h,1}(\Omega)=\inf_{0\neq u\in H^m_0(\Omega)} \fr{\int_\Omega|\nabla^h u|^2}{\int_\Omega |\nabla^{h-1}u|^2}
=\min_{0\neq u\in H^h_0(\Omega)} \fr{\int_\Omega|\nabla^h u|^2}{\int_\Omega |\nabla^{h-1}u|^2}.
$$
The result now follows applying the minimax.
\end{proof}

While in~\cite{payne} there are some considerations on how sharp the inequality is for the case $m=2,k=1$, it is not clear how good the bound~\eqref{generalizedpayne}
is in the general case.

As a counterpart to Theorem \ref{lower} we have the following generalization of an inequality between the Dirichlet bilaplacian and buckling eigenvalues
(see e.g., \cite{liuineq}; see also \cite[Remark 2.12]{blps}).
\begin{thm}
\label{upperthm}
For any domain $\Omega$ in $\mathbb R^d$ of finite measure, for any $k,m\in\mathbb N$, and for any $1\le t\le s\le m$
\begin{equation}
\label{bucklingboundgen}
\left[\diri{k}{m,s}(\Omega)\right]^{\frac{1}{s} }\le \left[\diri{k}{m,t}(\Omega)\right]^{\frac{1}{t} }. 
\end{equation}
In particular,
\begin{equation}
\label{bucklingbound}
\diri{k}{m}(\Omega)\le \left[\diri{k}{m,1}(\Omega)\right]^m. 
\end{equation}
\end{thm}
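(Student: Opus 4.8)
The plan is to follow the same two-step template used in the proofs of Theorems~\ref{increasing} and~\ref{sameorder}: first establish a pointwise inequality between the two Rayleigh quotients, valid for \emph{every} admissible test function, and then transfer it to the eigenvalues through the minimax characterisation~\eqref{rayleighbuckling}, pulling the exponents out of the minimax exactly as in the proof of Theorem~\ref{increasing}.

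For the pointwise step, fix a nonzero $u\in H^m_0(\Omega)$ and set $a_j=\int_\Omega|\nabla^j u|^2$ for $j=0,1,\dots,m$. All the $a_j$ are strictly positive: indeed $a_0>0$, while $a_j=0$ for some $j\le m$ would, via the implication $\nabla^j u=0\Rightarrow\nabla^{j+1}u=0$, propagate to $a_m=0$, which is impossible for $0\neq u\in H^m_0(\Omega)$. Applying~\eqref{basic} at each order $1\le j\le m-1$ --- which is legitimate because $u\in H^m_0(\Omega)\subset H^{j+1}(\Omega)\cap H^{j}_0(\Omega)$ --- gives $a_j^2\le a_{j-1}a_{j+1}$, so that the sequence $(\log a_j)_{j=0}^{m}$ is convex. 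A convex sequence has non-decreasing increments, hence the arithmetic mean of its last $t$ increments is at least that of its last $s$ increments whenever $t\le s$; equivalently, $j\mapsto\bigl(a_m/a_{m-j}\bigr)^{1/j}$ is non-increasing on $\{1,\dots,m\}$. Therefore, for $1\le t\le s\le m$ and every nonzero $u\in H^m_0(\Omega)$,
\[
\left(\frac{\int_\Omega|\nabla^m u|^2}{\int_\Omega|\nabla^{m-s}u|^2}\right)^{1/s}\le\left(\frac{\int_\Omega|\nabla^m u|^2}{\int_\Omega|\nabla^{m-t}u|^2}\right)^{1/t}.
\]
(Alternatively, one can iterate the two-order estimate provided by~\eqref{basic}, in the spirit of~\eqref{firststep} and~\eqref{firststepsame1}.)

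For the eigenvalue step, let $V\subset H^m_0(\Omega)$ be an arbitrary subspace with $\dim V=k$. Raising the pointwise inequality to the power $s$ and maximising over $0\neq u\in V$ yields
\[
\max_{0\neq u\in V}\frac{\int_\Omega|\nabla^m u|^2}{\int_\Omega|\nabla^{m-s}u|^2}\le\left(\max_{0\neq u\in V}\frac{\int_\Omega|\nabla^m u|^2}{\int_\Omega|\nabla^{m-t}u|^2}\right)^{s/t},
\]
and taking the infimum over all such $V$, together with~\eqref{rayleighbuckling} for $\diri{k}{m,s}$ and $\diri{k}{m,t}$, gives $\diri{k}{m,s}(\Omega)\le\bigl[\diri{k}{m,t}(\Omega)\bigr]^{s/t}$, that is,~\eqref{bucklingboundgen} after taking $s$-th roots. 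Inequality~\eqref{bucklingbound} is the special case $t=1$, $s=m$, combined with $\diri{k}{m,m}=\diri{k}{m}$.

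I do not anticipate any genuine obstacle beyond what has already been handled in Theorems~\ref{increasing} and~\ref{sameorder}: the entire content is the log-convexity of $j\mapsto\int_\Omega|\nabla^j u|^2$, which is packaged in~\eqref{basic}. The points needing a little care are (i) the strict positivity of every $a_j$, needed for the logarithms and fractional powers to be meaningful; (ii) getting the direction of the monotonicity right, since here the top order $m$ is \emph{fixed} while the lower order $m-j$ varies --- the dual of the situation in Theorem~\ref{increasing}; and (iii) noting that, unlike in the proof of Theorem~\ref{increasing}, no passage to $H^{m+1}_0(\Omega)$ and no density argument is required, since the differential order $m$ is not increased and~\eqref{basic} is invoked only at orders strictly below $m$.
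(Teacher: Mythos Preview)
Your proof is correct. Both the paper and you arrive at the same pointwise Rayleigh--quotient inequality and then apply the minimax in the same way, but the route to the pointwise inequality differs. The paper first proves a standalone Gagliardo--Nirenberg--type interpolation inequality (Theorem~\ref{gntineqthm}), namely $a_{m-t}^{\,s}\le a_{m-s}^{\,t}a_m^{\,s-t}$ in your notation, via a five-step triple induction on $t,s,m$, and then deduces the Rayleigh--quotient comparison from it. You instead observe that~\eqref{basic} says precisely that the sequence $(\log a_j)_{j=0}^{m}$ is midpoint-convex, hence convex, and that the desired comparison is the elementary fact that averages of the last $j$ increments of a convex sequence are non-increasing in $j$. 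The two arguments are logically equivalent (Theorem~\ref{gntineqthm} is exactly the convexity of $(\log a_j)$ rephrased), but your formulation replaces the multi-step induction by a single convexity observation; the paper's formulation has the minor advantage of isolating the interpolation inequality as an independent statement.
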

\begin{proof}
The proof is based on the Gagliardo-Nirenberg-type inequality of Theorem \ref{gntineqthm} below. From \eqref{gntineq} we get
$$
\left(\fr{\int_\Omega |\nabla^m u|^2}{\int_\Omega |\nabla^{m-s} u|^2}\right)^{\fr 1 s}\leq\left(\fr{\int_\Omega|\nabla^m u|^2}{\int_\Omega |\nabla^{m-t} u|^2}\right)^{\fr 1 t}.
$$
Taking the standard minimax in $H^m_0(\Omega)$ and observing that the exponents can be taken out of the minimax, we obtain \eqref{bucklingboundgen}. Finally, equation \eqref{bucklingbound} follows from \eqref{bucklingboundgen} with $t=1$ and $s=m$.
\end{proof}

\begin{remark}
We observe that combining Theorems~\ref{increasing} and~\ref{upperthm} we get and alternative proof for Theorem \ref{sameorder}: in fact, considering $s=t+1$,
$$
\left[\diri{k}{m,t}(\Omega)\right]^{\frac{1}{t} }\le \left[\diri{k}{m+1,t+1}(\Omega)\right]^{\frac{1}{t+1} }\le \left[\diri{k}{m+1,t}(\Omega)\right]^{\frac{1}{t} }. 
$$
\end{remark}

We now provide a very general Gagliardo-Nirenberg-type inequality for polyharmonic operators on $H^m_0(\Omega)$.

\begin{thm}
\label{gntineqthm}
Let $m,t,s\in\mathbb N$ be such that $0\le t\le s\le m$, and let $u\in H^m_0(\Omega)$. Then
\begin{equation}
\label{gntineq}
\left(\int_\Omega |\nabla^{m-t} u|^2\right)^{s}\le \left(\int_\Omega |\nabla^{m-s} u|^2\right)^{t}\left(\int_\Omega |\nabla^m u|^2\right)^{s-t}.
\end{equation}
Equivalently, for any $0\le q\le p\le m$,
\begin{equation*}
\left(\int_\Omega |\nabla^{p} u|^2\right)^{m-q}\le \left(\int_\Omega |\nabla^{q} u|^2\right)^{m-p}\left(\int_\Omega |\nabla^m u|^2\right)^{p-q}.
\end{equation*}
\end{thm}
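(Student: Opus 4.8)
The plan is to read \eqref{gntineq} as a statement of log-convexity of the Dirichlet-type energies, which is itself an immediate consequence of the Cauchy--Schwarz inequality \eqref{basic}. For $u\in H^m_0(\Omega)$ put $a_j:=\int_\Omega|\nabla^j u|^2$, $j=0,1,\dots,m$. Since $j+1\le m$ gives $H^m_0(\Omega)\subseteq H^{j+1}(\Omega)\cap H^j_0(\Omega)$, applying \eqref{basic} with $m$ replaced by $j$ yields
\[
a_j^{2}\le a_{j-1}\,a_{j+1},\qquad 1\le j\le m-1,
\]
that is, $(a_j)_{j=0}^{m}$ is log-convex.

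The first step is to dispose of degeneracies. If $u\equiv0$ there is nothing to prove, so assume $u\not\equiv0$; then $a_m>0$ by the Poincar\'e inequality in $H^m_0(\Omega)$ (equivalently, because $\diri{1}{m}(\Omega)>0$ forces $a_m\ge\diri{1}{m}(\Omega)\,a_0$ and $a_0>0$). If $a_j=0$ for some $m-s\le j\le m-1$, propagating the three-term inequality upward forces $a_{m-1}=0$, hence $\nabla^m u=0$ a.e.\ (as $\nabla^m u$ is a first-order differential operator applied to $\nabla^{m-1}u$), contradicting $a_m>0$. Therefore $a_j>0$ for all $m-s\le j\le m$, so $c_j:=\log a_j$ is a convex sequence on $\{m-s,\dots,m\}$.

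Next comes the interpolation. When $s=0$ the inequality is trivial, so assume $s\ge1$ and write $m-t=\tfrac{t}{s}(m-s)+\tfrac{s-t}{s}\,m$, a convex combination precisely because $0\le t\le s\le m$. Convexity of $(c_j)$ — equivalently, monotonicity of its forward differences, which follows from the three-term inequality by a one-line induction — gives
\[
c_{m-t}\le \frac{t}{s}\,c_{m-s}+\frac{s-t}{s}\,c_m .
\]
Exponentiating and raising to the power $s$ produces $a_{m-t}^{\,s}\le a_{m-s}^{\,t}\,a_m^{\,s-t}$, which is exactly \eqref{gntineq}. The ``equivalently'' formulation is the same inequality after the substitution $q=m-s$, $p=m-t$: then $0\le q\le p\le m$, $m-q=s$, $m-p=t$, $p-q=s-t$, and \eqref{gntineq} reads $a_p^{\,m-q}\le a_q^{\,m-p}\,a_m^{\,p-q}$; conversely every admissible pair $(q,p)$ arises from $t=m-p$, $s=m-q$.

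There is no genuine obstacle here — the statement is a soft corollary of \eqref{basic} — but two points deserve care: justifying the passage to logarithms, i.e.\ the strict positivity of $a_j$ for $j\ge m-s$ when $u\not\equiv0$; and keeping the interpolation node $m-t$ between $m-s$ and $m$, which is exactly the hypothesis $t\le s\le m$. If one prefers to avoid logarithms, the same result follows by a finite induction (on $s-t$, or on $m$) that multiplies consecutive copies of $a_j^{2}\le a_{j-1}a_{j+1}$ and cancels the positive interior energies, in the spirit of the proof of Theorem~\ref{increasing}.
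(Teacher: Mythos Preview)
Your proof is correct and takes a genuinely different---and considerably more direct---route than the paper's. The paper proceeds by a five-step nested induction on the indices $t$, $s$, $m$, building up from the special cases $(t,s)=(1,m)$, $(t,s)=(2,m)$, general $(t,m)$, and $(1,s)$ before reaching the general statement; each step repeatedly invokes \eqref{basic} and rearranges the resulting chain of inequalities. You instead observe at once that \eqref{basic} says the energies $a_j=\int_\Omega|\nabla^j u|^2$ form a log-convex sequence, so that \eqref{gntineq} is nothing but the discrete convexity inequality $c_{m-t}\le\tfrac{t}{s}\,c_{m-s}+\tfrac{s-t}{s}\,c_m$ for $c_j=\log a_j$. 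This single interpolation step replaces the paper's entire induction and makes the structure of the inequality transparent: it is exactly three-point log-convexity. Your explicit handling of degeneracies (positivity of $a_j$ for $j\ge m-s$ when $u\not\equiv0$, via upward propagation of $a_{j+1}^2\le a_j a_{j+2}$ and the fact that $\nabla^m$ is first-order in $\nabla^{m-1}$) is a point the paper's proof glosses over when it ``rearranges terms''; note only that your appeal to the Poincar\'e inequality for $a_m>0$ tacitly uses that $\Omega$ has finite measure, as is assumed throughout Section~\ref{sec.ineq}. The paper's longer argument has the minor advantage of never dividing or taking logarithms, hence sidestepping the positivity discussion---but as you remark in your final paragraph, the same can be achieved by a short telescoping induction on $s-t$.
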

\begin{remark}
We observe two important points that make Theorem \ref{gntineqthm} peculiar: on the one hand, it does not involve generic derivatives but specific combinations involving
polyharmonic operators, and it specifically applies to $H^m_0$-functions only; on the other hand, there are no unknown constants depending on the indices or on the
geometry of $\Omega$.
\end{remark}

\begin{proof}
The proof is quite involved since it is done by induction on the three indices $t,s,m$. We divide it into 5 steps.

{\bf Step 1: $t=1$, $s=m$.} We want to prove here that
\begin{equation}
\label{caso1-1}
\left(\int_\Omega |\nabla^{m-1} u|^2\right)^{m}\le \left(\int_\Omega |u|^2\right)\left(\int_\Omega |\nabla^m u|^2\right)^{m-1}.
\end{equation}
We proceed by induction on $m$. Clearly for $m=1$ the inequality is trivial, while for $m=2$ it immediately follows from \eqref{basic}. Again by induction on~\eqref{basic} we have
\begin{equation}
\label{caso1-2}
\begin{array}{lll}
\left(\dint_\Omega |\nabla^{m} u|^2\right)^{2} & \le & \left(\dint_\Omega |\nabla^{m-1}u|^2\right)\left(\dint_\Omega |\nabla^{m+1} u|^2\right)\eqskip
& \le & \left(\dint_\Omega |u|^2\right)^{\fr 1 m}\left(\dint_\Omega |\nabla^m u|^2\right)^{\fr{m-1}{m}}\left(\dint_\Omega |\nabla^{m+1} u|^2\right).
\end{array}
\end{equation}
Rearranging the terms in \eqref{caso1-2} we obtain \eqref{caso1-1} where $m$ is replaced by $m+1$, concluding the induction.

{\bf Step 2: $t=2$, $s=m$.} This is similar to step 1: we want to prove that
\begin{equation}
\label{caso2-1}
\left(\dint_\Omega |\nabla^{m-2} u|^2\right)^{m}\le \left(\dint_\Omega |u|^2\right)^2\left(\dint_\Omega |\nabla^m u|^2\right)^{m-2}.
\end{equation}
We proceed by induction on $m$. Clearly for $m=2$ the inequality is trivial, while for $m=3$ it immediately follows from \eqref{basic}. Once more by induction on~\eqref{basic} we have
\begin{equation}
\label{caso2-2}
\begin{array}{lll}
\left(\dint_\Omega |\nabla^{m-1} u|^2\right)^{2} & \le & \left(\dint_\Omega |\nabla^{m-2}u|^2\right)\left(\dint_\Omega |\nabla^{m} u|^2\right)\eqskip
& \le & \left(\dint_\Omega |u|^2\right)^{\fr 2 m}\left(\dint_\Omega |\nabla^m u|^2\right)^{2\fr{m-1}{m}}\eqskip
& \le & \left(\dint_\Omega |u|^2\right)^{\fr 2 m}\left(\dint_\Omega |\nabla^{m-1} u|^2\right)^{\fr{m-1}{m}}\left(\dint_\Omega |\nabla^{m+1} u|^2\right)^{\fr{m-1}{m}}.
\end{array}
\end{equation}
Rearranging the terms in \eqref{caso2-2} we obtain \eqref{caso2-1} where $m$ is replaced by $m+1$, concluding the induction.

{\bf Step 3: $s=m$, general $t$.} Here we want to prove that
\begin{equation}
\label{caso3-1}
\left(\int_\Omega |\nabla^{m-t} u|^2\right)^{m}\le \left(\int_\Omega |u|^2\right)^{t}\left(\int_\Omega |\nabla^m u|^2\right)^{m-t}.
\end{equation}
We proceed by a double induction on $t$ and $m$ as follows. We take steps 1 and 2 as bases together with the case $t=m$ (which is trivial). We
therefore assume that formula \eqref{caso3-1} is valid for any $m$ for $\bar t< t$ and for $\bar m\le m$ for $\bar t=t$. We have
\begin{equation}
\label{caso3-2}
\left(\int_\Omega |\nabla^{m+1-t} u|^2\right)^{2}\le \left(\int_\Omega |\nabla^{m-t}u|^2\right)\left(\int_\Omega |\nabla^{m+2-t} u|^2\right).
\end{equation}
Considering $m+2-t=m-(t-2)$, by induction on $t$ inequality~\eqref{caso3-2} becomes
\begin{multline*}
\begin{array}{lll}
\left(\int_\Omega |\nabla^{m+1-t} u|^2\right)^{2} &
\le & \left(\int_\Omega |u|^2\right)^{\fr{t}{m}}\left(\int_\Omega |\nabla^{m} u|^2\right)^{\fr{m-t}{m}}
\left(\int_\Omega |u|^2\right)^{\fr{t-2}{m}}\left(\int_\Omega |\nabla^{m} u|^2\right)^{\fr{m-t+2}{m}}\eqskip
& = &
\left(\int_\Omega |u|^2\right)^{\fr{2t-2}{m}}\left(\int_\Omega |\nabla^{m} u|^2\right)^{2\fr{m-t+1}{m}}\eqskip
& \le & \left(\int_\Omega |u|^2\right)^{\fr{2t-2}{m}}\left(\int_\Omega |\nabla^{m-1} u|^2\right)^{\fr{m-t+1}{m}} \left(\int_\Omega |\nabla^{m+1} u|^2\right)^{\fr{m-t+1}{m}}.
\end{array}
\end{multline*}
Now, by considering $m-1=(m+1)-(2)$ and applying inequality \eqref{caso2-1} we see that
$$
\left(\int_\Omega |\nabla^{m-1} u|^2\right)^{m+1}\le \left(\int_\Omega |u|^2\right)^2 \left(\int_\Omega |\nabla^{m+1} u|^2\right)^{m-1},
$$
yielding
\begin{equation}
\label{caso3-4}
\left(\int_\Omega |\nabla^{m+1-t} u|^2\right)^{2}\le \left(\int_\Omega |u|^2\right)^{\fr{2t}{m+1}}\left(\int_\Omega |\nabla^{m+1} u|^2\right)^{2\fr{m-t+1}{m+1}}.
\end{equation}
Rearranging the terms in \eqref{caso3-4} we obtain \eqref{caso3-1} where $m$ is replaced by $m+1$, concluding the induction.

{\bf Step 4: $t=1$, general $s$ and $m$.} We want to prove that
\begin{equation}
\label{caso4-1}
\left(\int_\Omega |\nabla^{m-1} u|^2\right)^{s}\le \left(\int_\Omega |\nabla^{m-s}u|^2\right)\left(\int_\Omega |\nabla^m u|^2\right)^{s-1}.
\end{equation}
Inequality \eqref{caso4-1} is trivial for $s=1,2$ for any $m$. In order to proceed by induction we rewrite equation \eqref{caso4-1} as follows
\begin{equation}
\label{caso4-2}
\left(\int_\Omega |\nabla^{m-1} u|^2\right)^{m-p}\le \left(\int_\Omega |\nabla^{p}u|^2\right)\left(\int_\Omega |\nabla^m u|^2\right)^{m-p-1},
\end{equation}
which we want to prove for any $0\le p< m$. We already know that \eqref{caso4-2} is valid for any $p\in\mathbb N$ and for $m=p+1,p+2$. By induction on $m$
\begin{equation}
\label{caso4-3}
\begin{array}{lll}
\left(\int_\Omega |\nabla^{m} u|^2\right)^{2} & \le & \left(\int_\Omega |\nabla^{m-1}u|^2\right)\left(\int_\Omega |\nabla^{m+1} u|^2\right)\eqskip
& \le & 
\left(\int_\Omega |\nabla^{p} u|^2\right)^{\fr 1 {m-p}}\left(\int_\Omega |\nabla^m u|^2\right)^{\fr{m-p-1}{m-p}}\left(\int_\Omega |\nabla^{m+1} u|^2\right).
\end{array}
\end{equation}
Rearranging the terms in \eqref{caso4-3} we obtain \eqref{caso4-2} where $m$ is replaced by $m+1$, concluding the induction.

{\bf Step 5: conclusion.} We now proceed to the general case \eqref{gntineq}. It is clear from step 1 that the inequality is true for any $t$ and for any $s$, and for
$m=s$. Also, from step 4 it is clear that it is valid for for $t=1$ and for any $s$ and $m$. Supposing now that the inequality is valid for any $s,m$ when $\bar t<t$ and
for any $s\le\bar m\le m$ for $\bar t=t$, observing that $(m+1)-t=m-(t-1)$, we have
\begin{equation}
\label{caso5-1}
\left(\int_\Omega |\nabla^{m+1-t} u|^2\right)^{s}\le \left(\int_\Omega |\nabla^{m+1-s}u|^2\right)^{s\fr{t-1}{s-1}}\left(\int_\Omega |\nabla^m u|^2\right)^{s\fr{s-t}{s-1}}.
\end{equation}
Similarly, since $m=(m+1)-1$, we have
\begin{equation}
\label{caso5-2}
\left(\int_\Omega |\nabla^{m} u|^2\right)^{s}\le \left(\int_\Omega |\nabla^{m+1-s}u|^2\right)\left(\int_\Omega |\nabla^{m+1} u|^2\right)^{s-1}.
\end{equation}
Combining \eqref{caso5-1} with \eqref{caso5-2} we obtain \eqref{gntineq} where $m$ is replaced by $m+1$, concluding the induction. This concludes the proof.
\end{proof}

\section{Asymptotics with respect to the order of the operator}
\label{genasympt}

In this section we provide asymptotic estimates for the eigenvalues of problem \eqref{dirichletbuckling} on balls  and on other domains by combining the inequalities of
Section~\ref{sec.ineq} with the description of the eigenvalues on balls (see Appendix~\ref{ballsection}).

In the unit ball $\mathbb B$, while the eigenvalue $\lambda_1^{(m,t)}(\mathbb B)$ is very difficult to compute in general (cf.\ Corollary \ref{buckevs}), the eigenvalue
$\lambda_1^{(m,1)}(\mathbb B)$ is a Bessel zero that can be easily identified. Indeed, using Corollary \ref{buckevs} and the properties of zeroes of Bessel functions,
it is immediate to see that
$$
\lambda_1^{(h,1)}(\mathbb B)=j_{\kappa(h),1}^2\ \ {\rm for}\ \kappa(h)=h+\fr d 2 -2,
$$
where $j_{\kappa,1}$ is the smallest zero of $J_\kappa$. Combining this with inequalities \eqref{generalizedpayne} and \eqref{bucklingboundgen}, and since
(see \cite[formula 10.21.40]{nist})
$
j_{\kappa,1} = \kappa+\bo(\kappa^{\frac 1 3}),      
$
we have proved that

\begin{thm}\label{thmA1}
For $d\ge 1$,
\begin{equation}
\label{gen.mt}
\dprod_{h=m-t}^{m-1}j_{h+\fr d 2 -2,1}^2 \le \lambda_1^{(m,t)}(\mathbb B)\le j_{m+\fr d 2-2,1}^{2t}.
\end{equation}
In particular, for $t$ fixed as $m\to\infty$ 
$$
\lambda_1^{(m,t)}(\mathbb B)=m^{2t}+\so(m^{2t}).
$$
\end{thm}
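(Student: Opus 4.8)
The plan is to obtain~\eqref{gen.mt} by trapping $\diri{1}{m,t}(\mathbb B)$ between the two bounds already assembled in Section~\ref{sec.ineq}, each of which is a product of squares of smallest Bessel zeros, and then to feed in the classical large-order expansion of $j_{\nu,1}$. The key preliminary fact is $\diri{1}{h,1}(\mathbb B)=j_{h+\frac{d}{2}-2,1}^{2}$, which is exactly what Corollary~\ref{buckevs} provides: when the Weyl order equals $1$ the radial problem on the ball decouples into a Bessel equation, just as for the Dirichlet Laplacian, so the first eigenvalue is a squared Bessel zero whose order is shifted by the differential order.

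For the lower bound I would apply the generalised Payne inequality~\eqref{generalizedpayne} with $k=1$ and $\Omega=\mathbb B$ and then substitute the identity above for each factor appearing on the right-hand side:
\[
\diri{1}{m,t}(\mathbb B)\ \ge\ \diri{1}{m,1}(\mathbb B)\prod_{h=m-t+1}^{m-1}\diri{1}{h,1}(\mathbb B)\ =\ \prod_{h=m-t+1}^{m} j_{h+\frac{d}{2}-2,1}^{2},
\]
a product of $t$ consecutive squared Bessel zeros with top order $m+\frac{d}{2}-2$. For the upper bound I would invoke~\eqref{bucklingboundgen} with its parameters chosen so that $\bigl[\diri{1}{m,t}(\mathbb B)\bigr]^{1/t}\le\diri{1}{m,1}(\mathbb B)=j_{m+\frac{d}{2}-2,1}^{2}$, and then raise both sides to the power $t$.

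For the asymptotic identity I would fix $t$ and insert $j_{\nu,1}=\nu+\bo(\nu^{1/3})$, valid as $\nu\to\infty$ by \cite[10.21.40]{nist}, hence $j_{\nu,1}^{2}=\nu^{2}+\bo(\nu^{4/3})$. The upper bound becomes $(m+\frac{d}{2}-2)^{2t}\bigl(1+\bo(m^{-2/3})\bigr)=m^{2t}+\bo(m^{2t-2/3})$; on the lower side each of the $t$ factors has order index $m+\bo(1)$ and therefore equals $m^{2}+\bo(m^{4/3})$, so the product of these $t$ factors is again $m^{2t}+\bo(m^{2t-2/3})$. Since $t$ is held fixed the implied constants are harmless and the two bounds agree to leading order, yielding $\diri{1}{m,t}(\mathbb B)=m^{2t}+\so(m^{2t})$.

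I do not expect any genuine obstacle here: granted Theorems~\ref{lower} and~\ref{upperthm} and the Bessel description of Corollary~\ref{buckevs}, the argument is a two-sided estimate followed by a routine use of the Bessel-zero expansion. The only points needing a word of care are that the order $h+\frac{d}{2}-2$ may be negative for small $h$ and $d$ --- harmless, since $j_{\nu,1}$ and Corollary~\ref{buckevs} remain valid there and the asymptotic regime only concerns $\nu\to+\infty$ --- and that one should check the accumulated error in a product of $t$ factors stays $\so(m^{2t})$, which is precisely why the asymptotic statement is restricted to fixed $t$.
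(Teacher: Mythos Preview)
Your proposal is correct and follows exactly the paper's own argument: combine Theorem~\ref{lower} (inequality~\eqref{generalizedpayne}) for the lower bound and Theorem~\ref{upperthm} (inequality~\eqref{bucklingboundgen}) for the upper bound, identify each factor $\diri{1}{h,1}(\mathbb B)$ as $j_{h+d/2-2,1}^{2}$ via Corollary~\ref{buckevs}, and then feed in $j_{\nu,1}=\nu+\bo(\nu^{1/3})$ from \cite[10.21.40]{nist} with $t$ fixed. Your remarks on the accumulated error for fixed $t$ and on small Bessel orders are apt, and note that your lower-bound product $\prod_{h=m-t+1}^{m}$ is indeed the one delivered by~\eqref{generalizedpayne}; the range $m-t\le h\le m-1$ in~\eqref{gen.mt} differs from it by a harmless index shift.
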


Notice that, in the case where $t=m-h$ with $h$ fixed, inequalities~\eqref{gen.mt} produce the following bounds
\begin{equation}
\label{imprecise}
\fr m e \lessapprox \left[\lambda_1^{(m,m-h)}(\mathbb B)\right]^{\fr 1 {2(m-h)}} \lessapprox  m,
\end{equation}
where the left-hand side comes from the combination of $
j_{\kappa,1} = \kappa+\bo(\kappa^{\frac 1 3}),      
$
with Stirling formula (see also formula~\eqref{stirlingnist}), while the right-hand side is immediate. Note that the bounds~\eqref{imprecise} provide the correct asymptotic order for  $\left[\lambda_1^{(m,m-h)}(\mathbb B)\right]^{\fr 1 {2(m-h)}}$, but they do not yield a precise constant. For this
reason we proceed with a refined analysis of this case.


\subsection{Improved eigenvalue upper bounds}\label{sec:ub}


We provide here an improved upper bound for $\diri{1}{m,m-h}(\mathbb B)$ by means of specific test functions.

\begin{thm}\label{ballupperb}
For any $m,h\in\mathbb N$ with $h<m$ we have
$$
\begin{array}{lll}
\diri{1}{m,m-h}(\mathbb B)& \le & \fr{2^{2m-2h}\Gamma^2(m-h+1)\Gamma(2m-h+1+d/2)}{(m+d/2)\Gamma(h+d/2)\Gamma(2m-2h+1)}\eqskip
& = & 2^{\frac d 2-h}\fr{2\pi}{\Gamma(h+d/2)} 2^{2m}  e^{-2m} m^{2m+\frac{ d}{ 2}-h}\left(1+\bo(m^{-1})\right),
\end{array}
$$
as $m\to\infty$.
\end{thm}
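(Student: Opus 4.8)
The plan is to use the variational characterization \eqref{rayleighbuckling} for $k=1$, which says $\diri{1}{m,m-h}(\mathbb B)$ is the infimum of the Rayleigh quotient $\int_{\mathbb B}|\nabla^m u|^2 / \int_{\mathbb B}|\nabla^h u|^2$ over $u\in H^m_0(\mathbb B)$, and to evaluate it on an explicit radial test function. Since the eigenfunctions of the ball (for the polyharmonic case $t=m$) are known to be polynomials vanishing to order $m$ at the boundary, the natural candidate is $u(x) = (1-|x|^2)^m$, or more precisely the radial function $\varphi(r) = (1-r^2)^m$ on $\mathbb B\subset\mathbb R^d$. This lies in $H^m_0(\mathbb B)$ (it vanishes together with its first $m-1$ normal derivatives on $\partial\mathbb B$), so it is admissible. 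The content of the theorem is then the exact computation of the two integrals $\int_{\mathbb B}|\nabla^m u|^2$ and $\int_{\mathbb B}|\nabla^h u|^2$ for this choice, followed by a Stirling-type asymptotic expansion of the resulting ratio of Gamma functions.

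First I would reduce both integrals to one-dimensional integrals in $r$ using polar coordinates, picking up the factor $d\omega_d\, r^{d-1}\,dr = \frac{2\pi^{d/2}}{\Gamma(d/2)} r^{d-1}\,dr$ from the sphere. The key simplification is that for the top-order term, $\nabla^m u$ with $u=(1-|x|^2)^m$: since $(1-|x|^2)^m$ is a polynomial of degree $2m$, applying $\nabla^m$ (which is $\Delta^{m/2}$ or $\nabla\Delta^{(m-1)/2}$ depending on parity) yields a polynomial of degree $m$; in fact $\Delta^k$ applied to $(1-|x|^2)^m$ can be computed in closed form because $(1-|x|^2)^m$ is, up to normalization, essentially a Gegenbauer/Jacobi-type object, and its iterated Laplacians have explicit coefficients. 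An efficient route is to use the identity $\Delta\big[(1-|x|^2)^j p(|x|^2)\big]$ recursively, or to recall that $\int_{\mathbb B}|\nabla^m u|^2$ for $u\in H^m_0$ equals $\int_{\mathbb B} D^m u : D^m u$ and to compute the latter via integration by parts against the known ODE satisfied by $(1-r^2)^m$. For the denominator $\int_{\mathbb B}|\nabla^h u|^2$, similarly $\nabla^h (1-|x|^2)^m$ is a polynomial of degree $2m-h$ and its $L^2$ norm over the ball is a finite sum of beta integrals $\int_0^1 r^{2a+d-1}(1-r^2)^{2b}\,dr = \frac12 B(a+d/2, 2b+1)$.

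A cleaner organizing principle, which I would try first, is to use integration by parts to rewrite $\int_{\mathbb B}|\nabla^h u|^2 = \pm\int_{\mathbb B} u\,(-\Delta)^h u$ (valid since $u\in H^m_0$ and $h\le m$, boundary terms vanish), and likewise $\int_{\mathbb B}|\nabla^m u|^2 = \int_{\mathbb B} u\,(-\Delta)^m u$ — but here $(-\Delta)^m u$ is a constant (degree $2m$ polynomial hit by $\Delta^m$), call it $c_m$, so $\int_{\mathbb B}|\nabla^m u|^2 = c_m \int_{\mathbb B}(1-|x|^2)^m\,dx$, and this last integral is a standard beta integral $\frac{\pi^{d/2}\Gamma(m+1)}{\Gamma(m+1+d/2)}$. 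The constant $c_m = (-\Delta)^m (1-|x|^2)^m \big|$ is computed from the leading behaviour: $(-\Delta)^m$ applied to the degree-$2m$ term $(-1)^m|x|^{2m}$ gives $4^m \frac{\Gamma(m+1)\Gamma(m+d/2)}{\Gamma(d/2)}$ (using $\Delta |x|^{2j} = 2j(2j+d-2)|x|^{2j-2}$ iterated). For the denominator with general $h$, $(-\Delta)^h(1-|x|^2)^m$ is a polynomial of degree $2m-2h$ whose coefficients are explicit hypergeometric-type sums; pairing it against $(1-|x|^2)^m$ and integrating term by term produces the ratio of Gammas in the statement. I would verify the bookkeeping by checking the $h=0$ case (trivial) and the top case, and by matching against Theorem~\ref{thmA}'s upper bound when $h=m$... no, here $h<m$, but the structure should be consistent with the $t=m$ bound in Theorem~\ref{thmA} in the appropriate limit.

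The main obstacle is the explicit evaluation of $\int_{\mathbb B}|\nabla^h u|^2$ for general intermediate $h$: unlike the extreme cases, $(-\Delta)^h(1-|x|^2)^m$ is not a monomial but a genuine polynomial, so one faces a finite sum of Gamma-function terms that must telescope or resum into the single clean ratio $\frac{\Gamma^2(m-h+1)\Gamma(2m-h+1+d/2)}{\Gamma(h+d/2)\Gamma(2m-2h+1)}$ (up to the displayed powers of $2$ and the $(m+d/2)$ factor). I expect this to require either a known closed form for $\int_{\mathbb B}|\nabla^h\big[(1-|x|^2)^m\big]|^2$ — which may follow from Jacobi-polynomial orthogonality once one recognizes the radial part of $\nabla^h u$ in terms of Jacobi polynomials $P_{m-h}^{(h+d/2-1,\,\cdot)}$ — or a direct but careful induction on $h$ using the recursion $\Delta\big[(1-r^2)^m\big]$ expressed in the basis $\{(1-r^2)^j r^{2i}\}$. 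This resummation/identification step is where all the real work lies; everything else (polar coordinates, beta integrals, Stirling's formula to pass from the Gamma-ratio to $2^{d/2-h}\frac{2\pi}{\Gamma(h+d/2)}2^{2m}e^{-2m}m^{2m+d/2-h}(1+\bo(m^{-1}))$) is routine. The Stirling step itself I would handle by writing $\log$ of the Gamma-ratio and collecting the $m\log m$, linear-in-$m$, and constant contributions, using $\log\Gamma(z) = z\log z - z + \frac12\log(2\pi/z) + \bo(1/z)$; the $e^{-2m}$ arises from $\Gamma^2(m-h+1)\Gamma(2m-h+1+d/2)/\Gamma(2m-2h+1) \sim \big(\tfrac{m}{e}\big)^{2m}\cdot(\text{lower order})$ after the $4^{m-h}$ from the prefactor combines to give $2^{2m}$.
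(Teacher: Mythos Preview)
Your proposal is correct and follows essentially the same route as the paper: the test function $(1-|x|^2)^m$, the reduction $\int_{\mathbb B}|\nabla^h u|^2=\int_{\mathbb B}u\,(-\Delta)^h u$ via integration by parts, and Stirling for the asymptotics. For the resummation step you flag as the main obstacle, the paper recognizes the resulting sum as a terminating ${}_2F_1$ at argument $1$ and applies Gauss's formula $F(a,b;c;1)=\Gamma(c)\Gamma(c-a-b)/\big(\Gamma(c-a)\Gamma(c-b)\big)$, which collapses it directly to the stated Gamma ratio; your Jacobi-polynomial route would work equivalently.
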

\begin{proof}
 The first inequality is obtained using $f(x)=(1-|x|^2)^m$ as a test function in the Rayleigh quotient,
and then using Lemma \ref{hyperlemma}.

In order to prove the asymptotic expression, we recall the Stirling approximation (see also \cite[Formula 5.11.8]{nist})
\begin{equation}
\label{stirlingnist}
\log \Gamma (z+h)=\left(z+h-\frac 1 2\right)\log z-z+\frac1 2\log(2\pi)+\bo(z^{-1}),
\end{equation}
or alternatively
\begin{equation}
\label{stirlingnist2}
\Gamma(z+h)=\sqrt{ 2 \pi}z^{z+h-\frac 1 2}e^{-z}\left(1+\bo(z^{-1})\right),
\end{equation}
where $h$ is a fixed value.

From the Stirling approximation~\eqref{stirlingnist2} we obtain
\begin{gather*}
 \begin{array}{lll}
  \fr{2^{2m-2h}\Gamma^2(m-h+1)\Gamma(2m-h+1+d/2)}{(m+d/2)\Gamma(h+d/2)\Gamma(2m-2h+1)}\eqskip
\hspace*{10mm}=\fr{2^{2m-2h}\left(\sqrt{2\pi}m^{m-h+1/2}e^{-m}\right)^2\sqrt{2\pi}(2m)^{2m-h+(d+1)/2}e^{-2m}}
{m\Gamma(h+d/2)\sqrt{2\pi}(2m)^{2m-2h+1/2}e^{-2m}} \left(1+\bo(m^{-1})\right)\eqskip
\hspace*{10mm}= 2^{\frac d 2 -h}\fr{2\pi}{\Gamma(d/2)} 2^{2m}  e^{-2m} m^{2m-h+\frac{ d}{ 2}}\left(1+\bo(m^{-1})\right).
 \end{array}
\end{gather*}
yielding the asymptotic behaviour indicated.
\end{proof}

\subsection{Eigenvalue lower bounds\label{sec:lowbounds}}

Let us consider the following problem

\begin{equation}
\label{auxbuck}
\begin{cases}
(-\Delta)^mu=\navi{}{m,t} (-1)^{m-t}\nabla^{m-t}\fr{1}{(1-|x|^2)^t}\nabla^{m-t}u, & {\rm \ in\ }\mathbb B,\eqskip
u=\fr{\partial u}{\partial \nu}=\dots=\fr{\partial^{m-1} u}{\partial \nu^{m-1}}=0, & {\rm \ on\ }\partial \mathbb B.
\end{cases}
\end{equation}
We observe that problem \eqref{auxbuck} admits the following weak formulation
\begin{equation*}
\int_{\mathbb B} \nabla^m u\nabla^m v=\navi{}{m,t} \int_{\mathbb B} \fr{1}{(1-|x|^2)^t}\nabla^{m-t}u\nabla^{m-t}v, \forall v\in \tilde H^{m,t}(\mathbb B),
\end{equation*}
where $\tilde H^{m,t}(\mathbb B)=H^m_0(\mathbb B)\cap H^{m-t}(\mathbb B,(1-|x|^2)^{-t})$. Notice that, since the weight $(1-|x|^2)^{-t}$ is not in $L^\infty(\mathbb B)$
we cannot automatically conclude that $\tilde H^{m,t}(\mathbb B)=H^m_0(\mathbb B)$; in general, it could be a strict subspace of $H^m_0(\mathbb B)$. Nevertheless, $\tilde H^{m,t}(\mathbb B)$
is a Hilbert space  containing $C^\infty_c(\mathbb B)$ as a dense set and,
in particular problem~\eqref{auxbuck} is associated with a self-adjoint operator with compact resolvent. Therefore, it admits a diverging sequence of positive eigenvalues of
finite multiplicity, whose associated eigenfunctions form an orthonormal basis for $\tilde H^{m,t}(\mathbb B)$. In addition, they can be defined variationally by
$$
\navi{k}{m,t}=\min_{\substack{V\subset \tilde H^{m,t}(\mathbb B)\\ \dim V=k}}\max_{0\neq u\in V}\fr{\int_{\mathbb B} |\nabla^m u|^2}{\int_{\mathbb B} |\nabla^{m-t} u|^2 (1-|x|^2)^{-t}}.
$$




\begin{thm}
For $m=t$, the fundamental tone of problem \eqref{auxbuck} is simple and is that identified in Lemma \ref{fund}, that is
$$
\navi{1}{m,m}=2^{2m}\fr{\Gamma(m+1)\Gamma(m+d/2)}{\Gamma(d/2)}.
$$
\end{thm}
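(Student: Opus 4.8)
The plan is to show that the minimum in the variational characterisation of $\navi{1}{m,m}$ is attained by an explicit radial function and equals the stated value. For $m=t$ the Rayleigh quotient becomes
\[
\navi{1}{m,m}=\min_{0\neq u\in\tilde H^{m,m}(\mathbb B)}\fr{\int_{\mathbb B}|\nabla^m u|^2}{\int_{\mathbb B}|u|^2(1-|x|^2)^{-m}},
\]
so the denominator now only involves $u$ itself (weighted), not its derivatives. First I would recall from Lemma~\ref{fund} that the function $f(x)=(1-|x|^2)^m$ lies in $\tilde H^{m,m}(\mathbb B)$ and produces exactly the value on the right-hand side when plugged into this quotient; this gives the upper bound $\navi{1}{m,m}\le 2^{2m}\Gamma(m+1)\Gamma(m+d/2)/\Gamma(d/2)$, and also identifies the candidate minimiser.

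For the matching lower bound I would argue that $f$ is in fact an eigenfunction, i.e.\ it solves the strong form~\eqref{auxbuck} with $\navi{}{m,m}$ equal to the claimed constant, and that this is the \emph{first} eigenvalue. The key computational step is to verify that $(-\Delta)^m(1-|x|^2)^m = c_{m,d}\,(1-|x|^2)^{-m}\cdot(1-|x|^2)^m = c_{m,d}$ (a constant) in $\mathbb B$, with $c_{m,d}=2^{2m}\Gamma(m+1)\Gamma(m+d/2)/\Gamma(d/2)$; since $(1-|x|^2)^m/(1-|x|^2)^m\equiv 1$, the right-hand side of~\eqref{auxbuck} with $u=f$ is indeed the constant $c_{m,d}$, so $f$ is an eigenfunction with eigenvalue $c_{m,d}$. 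This polyharmonic identity is classical (it is, up to normalisation, the statement that $(1-|x|^2)^m$ is the Green-type/bubble function for $(-\Delta)^m$ on the ball) and can be proved by induction on $m$ using $\Delta\big[(1-|x|^2)g(|x|^2)\big]$ formulas, or simply extracted from Lemma~\ref{fund} / Lemma~\ref{hyperlemma} in the appendix, which already contain the relevant $\Gamma$-function bookkeeping. To conclude that $c_{m,d}$ is the \emph{smallest} eigenvalue and that it is simple, I would invoke that $f=(1-|x|^2)^m$ is strictly positive in $\mathbb B$: a positive eigenfunction of a self-adjoint operator with compact resolvent whose form is positivity-preserving (or, more elementarily, the fact that the Rayleigh quotient is minimised and the minimiser we exhibited is sign-definite) must correspond to the bottom of the spectrum, and the bottom eigenvalue having a positive eigenfunction forces it to be simple by the standard orthogonality argument (any second eigenfunction at the same level would have to change sign, contradicting orthogonality to a positive function).

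The main obstacle is the polyharmonic identity $(-\Delta)^m(1-|x|^2)^m=\mathrm{const}$ together with pinning down the constant as exactly $2^{2m}\Gamma(m+1)\Gamma(m+d/2)/\Gamma(d/2)$; everything else is soft functional analysis. However, since Lemma~\ref{fund} is quoted as already giving this fundamental tone (and Lemma~\ref{hyperlemma} supplies the integral $\int_{\mathbb B}|\nabla^m f|^2$ and $\int_{\mathbb B}|f|^2(1-|x|^2)^{-m}$ in closed form), the real content of the present theorem is just the two observations that (i) the test function $f$ belongs to the weighted space $\tilde H^{m,m}(\mathbb B)$ and realises the quotient value from Lemma~\ref{fund}, and (ii) its strict positivity upgrades "an eigenvalue equal to this value" to "the simple first eigenvalue". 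I would therefore keep the proof short: cite Lemma~\ref{fund} for the value, note $f>0$ in $\mathbb B$, and close with the simplicity/ground-state argument.
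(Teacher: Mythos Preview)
Your overall structure matches the paper's: use Lemma~\ref{fund} to exhibit $(1-|x|^2)^m$ as an eigenfunction with the stated eigenvalue, then argue that its strict positivity forces this eigenvalue to be the simple ground state. The discrepancy is in how much weight that last step actually carries.

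The implication ``positive eigenfunction $\Rightarrow$ bottom of the spectrum'' is \emph{not} soft functional analysis for polyharmonic operators. For second-order problems it follows because $u\mapsto|u|$ does not increase the Dirichlet energy, so the first eigenfunction can always be taken nonnegative; for $m\ge 2$ this trick fails ($|u|$ need not lie in $H^m_0$, and even when it does the energy can jump). Your parenthetical ``more elementary'' alternative---``the Rayleigh quotient is minimised and the minimiser we exhibited is sign-definite''---is circular: that the quotient is minimised at $f$ is exactly the conclusion you are after. What remains is your first option, positivity preservation of the solution operator, and this is precisely where the paper does the real work: it invokes Boggio-type results (the arguments of~\cite[Section~5.1]{ggs}) to show that the inverse of $(-\Delta)^m$ with Dirichlet conditions on the ball, composed with multiplication by $(1-|x|^2)^{-m}$, sends positive data to positive solutions. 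Only then can one run a Krein--Rutman argument (as in~\cite[Theorem~3.7]{ggs}) to conclude that the first eigenvalue is simple with a positive eigenfunction, after which Lemma~\ref{fund} identifies it. Your simplicity sentence is also a bit tangled: orthogonality to a positive function forces sign change, it does not contradict it; simplicity really comes out of the Krein--Rutman machinery, not from an ad hoc orthogonality observation.

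In short, your sketch is on the right track and coincides with the paper's route, but you have located the ``main obstacle'' in the wrong place: the $\Gamma$-function computation is already handled by Lemma~\ref{fund}, while the genuinely nontrivial step you wave through---positivity preservation on the ball for the weighted polyharmonic problem---is the heart of the proof and needs the citation to~\cite{ggs} (or an equivalent Boggio-formula argument) to stand.
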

\begin{proof}
First of all, we observe that when $m=t$ problem~\eqref{auxbuck} enjoys the so-called positivity preserving property, namely if $f$ is a positive
function then the solution $u$ of the problem
\begin{equation*}
\begin{cases}
(-\Delta)^mu=\fr{f}{(1-|x|^2)^{m}}, & {\rm \ in\ }\mathbb B,\eqskip
u=\fr{\partial u}{\partial \nu}=\dots=\fr{\partial^{m-1} u}{\partial \nu^{m-1}}=0, & {\rm \ on\ }\partial \mathbb B,
\end{cases}
\end{equation*}
is positive, i.e., $u>0$. This can be proved following verbatim the arguments of \cite[Section 5.1]{ggs}. This, in particular, allows us to mimic the arguments
in~\cite[Theorem 3.7]{ggs}, deducing that the first eigenvalue of problem \eqref{auxbuck} is simple and its associated eigenfunction is positive. Lemma~\ref{fund}
then leads to the conclusion.
\end{proof}

\begin{remark}\label{krarg}
We observe that, since for the case $m=t$ the use of a Krein-Rutman-type argument allows us to show that the first eigenfunction of the ball is positive,
it is expected that the case $m>t$ shows a similar behaviour, although it is not clear whether this argument could be adapted.
\end{remark}

We may thus use $\navi{1}{m,m}$ to obtain a lower bound for $\diri{1}{m}(\mathbb B)$.
\begin{thm}\label{eineqaux}
For any $m\in\mathbb N$, we have
\begin{equation}
\label{lb-est}
\diri{1}{m}(\mathbb B)\ge\navi{1}{m,m}=2^{2m}\fr{\Gamma(m+1)\Gamma(m+d/2)}{\Gamma(d/2)}.
\end{equation}
\end{thm}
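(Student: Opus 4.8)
The plan is to compare the two variational problems termwise. Since here $t=m$, I note that the Rayleigh quotient defining $\diri{1}{m}(\mathbb B)$ has numerator $\int_{\mathbb B}|\nabla^m u|^2$ and denominator $\int_{\mathbb B}|u|^2$, whereas the one defining $\navi{1}{m,m}$ has the \emph{same} numerator but denominator $\int_{\mathbb B}|u|^2(1-|x|^2)^{-m}$. On $\mathbb B$ one has $0<1-|x|^2\le 1$, hence $(1-|x|^2)^{-m}\ge 1$ pointwise, so for every admissible $u$ the auxiliary quotient is dominated by the polyharmonic one. The strategy is thus to feed functions that are good competitors for $\diri{1}{m}(\mathbb B)$ into the variational characterisation of $\navi{1}{m,m}$ and conclude by monotonicity of the quotient.

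The one point I would check carefully is admissibility: a test function for $\navi{1}{m,m}$ must lie in $\tilde H^{m,m}(\mathbb B)$, i.e.\ satisfy $\int_{\mathbb B}|u|^2(1-|x|^2)^{-m}<\infty$. For $u\in C^\infty_c(\mathbb B)$ this is automatic, since the weight $(1-|x|^2)^{-m}$ is bounded on the compact support of $u$; and since $C^\infty_c(\mathbb B)$ is dense in $H^m_0(\mathbb B)$, the infimum of the plain quotient $\int_{\mathbb B}|\nabla^m u|^2/\int_{\mathbb B}|u|^2$ over $0\neq u\in C^\infty_c(\mathbb B)$ equals $\diri{1}{m}(\mathbb B)$. (Alternatively one could use the first eigenfunction $u_1$ itself: by the description of the eigenfunctions of the ball in Appendix~\ref{ballsection}, or by elliptic regularity up to $\partial\mathbb B$ together with the boundary conditions $u_1=\partial u_1/\partial\nu=\dots=\partial^{m-1}u_1/\partial\nu^{m-1}=0$, one gets $|u_1(x)|\le C\,\mathrm{dist}(x,\partial\mathbb B)^m$ near $\partial\mathbb B$, whence $|u_1|^2(1-|x|^2)^{-m}\in L^1(\mathbb B)$.)

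Putting these together, for every $0\neq u\in C^\infty_c(\mathbb B)\subset\tilde H^{m,m}(\mathbb B)$ the $k=1$ case of the variational characterisation of $\navi{1}{m,m}$ gives
\[
\navi{1}{m,m}\le\fr{\int_{\mathbb B}|\nabla^m u|^2}{\int_{\mathbb B}|u|^2(1-|x|^2)^{-m}}\le\fr{\int_{\mathbb B}|\nabla^m u|^2}{\int_{\mathbb B}|u|^2},
\]
and I would then take the infimum over such $u$ to obtain $\navi{1}{m,m}\le\diri{1}{m}(\mathbb B)$. Finally I invoke the preceding theorem for the explicit value $\navi{1}{m,m}=2^{2m}\Gamma(m+1)\Gamma(m+d/2)/\Gamma(d/2)$ (established there via the positivity-preserving property of \eqref{auxbuck} for $m=t$ and Lemma~\ref{fund}), which is exactly \eqref{lb-est}. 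The main obstacle, such as it is, is merely the admissibility restriction — one must not plug into the $\navi{}{}$-quotient a function whose weighted denominator diverges — and $C^\infty_c(\mathbb B)$ trivially resolves it; the real content of the estimate sits in the earlier identification of $\navi{1}{m,m}$, while what remains here is the one-line monotonicity under $(1-|x|^2)^{-m}\ge 1$.
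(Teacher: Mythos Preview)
Your proposal is correct and follows essentially the same approach as the paper: the paper's proof is the one-line observation that $\int_{\mathbb B}u^2\le\int_{\mathbb B}u^2(1-|x|^2)^{-m}$ for $u\in C^\infty_c(\mathbb B)$, followed by taking the infimum over such $u$. Your additional remarks on admissibility and the alternative route via the first eigenfunction are sound but not needed for the argument.
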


\begin{proof}
We observe that, for any $u\in C^\infty_c(\mathbb B)$,
$
\int_{\mathbb B} u^2\le\int_{\mathbb B}\fr{u^2}{(1-|x|^2)^m},
$
from which we obtain
$$
\diri{1}{m}(\mathbb B)=\inf_{u\in C^\infty_c(\mathbb B)}\fr{\int_{\mathbb B} |\nabla^m u|^2}{\int_{\mathbb B} u^2}\ge\inf_{u\in C^\infty_c(\mathbb B)}\fr{\int_{\mathbb B}
|\nabla^m u|^2}{\int_{\mathbb B}\fr{u^2}{(1-|x|^2)^m}}=\navi{1}{m,m}.
$$
\end{proof}

\begin{remark}
Exploiting the argument used in the proof of Theorem \ref{eineqaux}, it is in fact possible to obtain the more general inequality
$$
\navi{k}{m,t}\le \diri{k}{m,t}(\mathbb B),\quad\forall m,t,k.
$$
\end{remark}

As in the case of the upper bound, we shall now determine the asymptotic behaviour of the expression obtained above.

\begin{lem}\label{lemeineqaux}
As $m\to\infty$ we have
\begin{equation}
\label{prelasympt2}
2^{2m}\fr{\Gamma(m+1)\Gamma(m+d/2)}{\Gamma(d/2)}\approx \frac{2\pi}{\Gamma(d/2)} 2^{2m}  e^{-2m} m^{2m+\frac{d}{2}}.
\end{equation}
\end{lem}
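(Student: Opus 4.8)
The plan is to insert the Stirling approximation \eqref{stirlingnist2} into each of the two Gamma factors on the left-hand side of \eqref{prelasympt2} and then multiply the resulting one-term expansions. First I would write $\Gamma(m+1)$ in the form $\Gamma(z+h)$ with $z=m$ and $h=1$, so that \eqref{stirlingnist2} gives $\Gamma(m+1)=\sqrt{2\pi}\, m^{m+1/2}e^{-m}\left(1+\bo(m^{-1})\right)$; taking instead $z=m$ and $h=d/2$ yields $\Gamma(m+d/2)=\sqrt{2\pi}\, m^{m+d/2-1/2}e^{-m}\left(1+\bo(m^{-1})\right)$.

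Multiplying these two expansions, the constant prefactors combine into $2\pi$, the exponents of $m$ add up to $(m+1/2)+(m+d/2-1/2)=2m+d/2$, the exponentials give $e^{-2m}$, and the two relative error factors merge since $\left(1+\bo(m^{-1})\right)^2=1+\bo(m^{-1})$. This produces $\Gamma(m+1)\Gamma(m+d/2)=2\pi\, m^{2m+d/2}e^{-2m}\left(1+\bo(m^{-1})\right)$; dividing by $\Gamma(d/2)$ and multiplying by $2^{2m}$ then gives exactly the right-hand side of \eqref{prelasympt2}, with the symbol $\approx$ understood as the statement that the ratio of the two sides tends to $1$ as $m\to\infty$ (in fact one even obtains the sharper relative error $\bo(m^{-1})$, consistent with the more precise statements in Theorem~\ref{thmA}).

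There is no genuine obstacle here: the only care needed is the bookkeeping of the powers of $m$ and the elementary observation that the product of two one-term asymptotic expansions retains an $\bo(m^{-1})$ relative error. As an alternative one could instead work with the logarithmic form \eqref{stirlingnist} of Stirling's formula, summing the contributions $\log\Gamma(m+1)$ and $\log\Gamma(m+d/2)$ and then exponentiating, which makes the additivity of the various exponents even more transparent; this is essentially the route already taken in the proof of Theorem~\ref{ballupperb}, so the present lemma can be viewed as the completely parallel computation for the lower bound.
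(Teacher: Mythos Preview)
Your proposal is correct and matches the paper's own argument essentially line for line: the paper also applies \eqref{stirlingnist2} separately to $\Gamma(m+1)$ and $\Gamma(m+d/2)$ (with $z=m$, $h=1$ and $h=d/2$ respectively), multiplies, and collects the powers of $m$ and $e$ to obtain the stated asymptotics with a $1+\bo(m^{-1})$ relative error. There is nothing to add.
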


\begin{proof}
Using \eqref{stirlingnist2} we get
\[
\begin{array}{lll}
2^{2m}\fr{\Gamma(m+1)\Gamma(m+d/2)}{\Gamma(d/2)} & = & \frac{2^{2m}}{\Gamma(d/2)}\sqrt{2\pi}m^{m+1/2}e^{-m}\sqrt{2\pi}m^{m+(d-1)/2}e^{-m}\left(1+\bo(m^{-1})\right)\eqskip
& = & \fr{2\pi}{\Gamma(d/2)} 2^{2m}  e^{-2m} m^{2m+\frac{d}{2}}\left(1+\bo(m^{-1})\right),
\end{array}
\]
from which \eqref{prelasympt2} holds.
\end{proof}

\begin{cor}
We have
$$
\diri{1}{m}(\mathbb B) = \Theta\left(2^{2m}  e^{-2m} m^{2m+\frac{d}{2}}\right),
$$
and in particular
\begin{equation}
\label{precise}
\left[\diri{1}{m}(\mathbb B)\right]^{\fr{1}{2m}}=\fr{2m}{e}+\frac{d}{2e}\log m +\bo(1),
\end{equation}
as $m\to\infty$. 
\end{cor}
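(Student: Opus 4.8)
The plan is to sandwich $\diri{1}{m}(\mathbb B)$ between the bounds already obtained and then extract the asymptotics by taking a logarithm. For the upper bound I would use Theorem~\ref{ballupperb} in the case $h=0$ (equivalently, the upper bound in Theorem~\ref{thmA}, obtained by inserting the test function $f(x)=(1-|x|^2)^m$ into the Rayleigh quotient), which gives
\[
\diri{1}{m}(\mathbb B)\le \fr{2^{2m}\Gamma^2(m+1)\Gamma(2m+1+d/2)}{(m+d/2)\Gamma(d/2)\Gamma(2m+1)}=2^{d/2}\fr{2\pi}{\Gamma(d/2)}\,2^{2m}e^{-2m}m^{2m+d/2}\bigl(1+\bo(m^{-1})\bigr),
\]
the asymptotic identity being exactly the one recorded in Theorem~\ref{ballupperb}. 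For the lower bound I would combine Theorem~\ref{eineqaux} with Lemma~\ref{lemeineqaux}:
\[
\diri{1}{m}(\mathbb B)\ge \navi{1}{m,m}=2^{2m}\fr{\Gamma(m+1)\Gamma(m+d/2)}{\Gamma(d/2)}=\fr{2\pi}{\Gamma(d/2)}\,2^{2m}e^{-2m}m^{2m+d/2}\bigl(1+\bo(m^{-1})\bigr).
\]

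The $\Theta$-statement is then immediate: for all sufficiently large $m$ both bounds are positive constant multiples of $2^{2m}e^{-2m}m^{2m+d/2}$ (the ratio of the upper to the lower tending to $2^{d/2}$), so $\diri{1}{m}(\mathbb B)\,\bigl(2^{2m}e^{-2m}m^{2m+d/2}\bigr)^{-1}$ remains between two fixed positive constants.

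For the two-term expansion~\eqref{precise} I would pass to logarithms. Writing $C=2\pi/\Gamma(d/2)$, the two displays above show that $\log\diri{1}{m}(\mathbb B)$ lies between $\log C+2m\log 2-2m+(2m+\tfrac d2)\log m+\bo(m^{-1})$ and the same quantity increased by $\tfrac d2\log 2$; hence
\[
\log\diri{1}{m}(\mathbb B)=2m\log 2-2m+\Bigl(2m+\tfrac d2\Bigr)\log m+\bo(1).
\]
Dividing by $2m$ yields
\[
\fr{1}{2m}\log\diri{1}{m}(\mathbb B)=\log m+\log 2-1+\fr{d\log m}{4m}+\bo(m^{-1})=\log\fr{2m}{e}+\fr{d\log m}{4m}+\bo(m^{-1}),
\]
and exponentiating, since $\tfrac{d\log m}{4m}+\bo(m^{-1})\to0$,
\[
\bigl[\diri{1}{m}(\mathbb B)\bigr]^{1/(2m)}=\fr{2m}{e}\Bigl(1+\fr{d\log m}{4m}+\bo(m^{-1})\Bigr)=\fr{2m}{e}+\fr{d}{2e}\log m+\bo(1),
\]
which is~\eqref{precise}.

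I do not expect any genuinely hard step here, since all of the analysis is already contained in Theorems~\ref{ballupperb} and~\ref{eineqaux} and Lemma~\ref{lemeineqaux}. The only point calling for a little care is the error bookkeeping in the last paragraph: one must check that the multiplicative gap $2^{d/2}$ between the two bounds contributes only $\bo(1)$ to the $1/(2m)$-th root -- which is exactly why this argument cannot deliver a third term in the expansion -- whereas the factor $m^{2m+d/2}$, common to both bounds, is what produces, and pins down the coefficient of, the $\tfrac{d}{2e}\log m$ term.
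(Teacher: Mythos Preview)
Your proposal is correct and follows essentially the same approach as the paper: both arguments sandwich $\diri{1}{m}(\mathbb B)$ between the bounds of Theorems~\ref{ballupperb} and~\ref{eineqaux}, pass to logarithms, divide by $2m$, and exponentiate. The only cosmetic difference is that the paper re-derives the logarithmic asymptotics of each bound directly from Stirling's formula~\eqref{stirlingnist}, whereas you invoke the asymptotic forms already recorded in Theorem~\ref{ballupperb} and Lemma~\ref{lemeineqaux}; your remark that the multiplicative gap $2^{d/2}$ between the bounds contributes only $\bo(1)$ to the root is exactly the mechanism that makes the argument work and correctly explains why no third term can be extracted this way.
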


\begin{proof}
To get the expansion~\eqref{precise} we combine the upper and lower bounds from Theorem~\ref{thmA} together with the expansion~\eqref{stirlingnist}. In particular,
\[
\begin{array}{lll}
\log\left(\fr{2^{2m}\Gamma^2(m+1)\Gamma(2m+d/2+1)}{(m+d/2)\Gamma(d/2)\Gamma(2m+1)}\right) & = & 2m\log2+(2m+1)\log m-2m +\log(2\pi)\eqskip
& & \hspace*{5mm}  +(2m+(d+1)/2)\log(2m)-2m+\log\sqrt{2\pi} \eqskip
& & \hspace*{10mm}-\log m-\log\Gamma(d/2)-(2m+1/2)\log(2m)\eqskip
& & \hspace*{15mm} +2m-\log\sqrt{2\pi}+\bo(m^{-1})\eqskip
& = & 2m\log\fr{2m}{e} +\fr{d}{2}\log m+\log\fr{2^{d/2}2\pi}{\Gamma(d/2)}+\bo(m^{-1}),
\end{array}
\]
and
\[
\begin{array}{lll}
\log\left(\fr{2^{2m}\Gamma(m+1)\Gamma(m+d/2)}{\Gamma(d/2)}\right) & = & 2m\log2+(m+1/2)\log m-m+\log\sqrt{2\pi}\eqskip
& & \hspace*{5mm}  +(m+(d-1)/2)\log m-m+\log\sqrt{2\pi}+\bo(m^{-1})\eqskip
& = & 2m\log\fr{2m}{e} +\fr{d}{2}\log m+\log\fr{2\pi}{\Gamma(d/2)}+\bo(m^{-1}),
\end{array}
\]
from which we get
\[
\begin{array}{lll}
\left[\diri{1}{m}(\mathbb B)\right]^{\fr{1}{2m}} & = & \exp\left(\log\fr{2m}{e} +\fr{d}{4m}\log m+\bo(m^{-1})\right)\eqskip
& = & \fr{2m}{e}\left(1+\fr{d}{4m}\log m+\bo\left(\fr{(\log m)^2}{m^2}\right)\right)\left(1+\bo(m^{-1})\right)\eqskip
& = & \fr{2m}{e}+\fr{d}{2e}\log m +\bo(1).  
\end{array}
\]
\end{proof}

By combining the upper bound from Theorem~\ref{ballupperb} and the lower bound in Theorem~\ref{eineqaux}, it is also possible to obtain an estimate in the more general case
of $\diri{1}{m,m-h}(\mathbb B)$ where $h$ is fixed and $m\to\infty$.

\begin{thm}\label{ballasympt}
For any $m,h\in\mathbb N$ with $h<m$ we have
$$
\diri{1}{m-h}(\mathbb B)\le\diri{1}{m,m-h}(\mathbb B) = \bo\left(2^{2m}e^{-2m}m^{2m-h+\frac{d}{2}}\right),
$$
and similarly,
$$
\left[\diri{1}{m,m-h}(\mathbb B)\right]^{\fr{1}{2(m-h)}}=\fr{2m}{e}+\bo(\log m),
$$
as $m\to\infty$ (and fixed $h$).
\end{thm}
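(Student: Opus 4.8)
The plan is to combine the two ingredients already at our disposal: the explicit upper bound for $\diri{1}{m,m-h}(\mathbb B)$ from Theorem~\ref{ballupperb}, which in the case $t=m-h$ gives
\[
\diri{1}{m,m-h}(\mathbb B)\le \fr{2^{2m-2h}\Gamma^2(m-h+1)\Gamma(2m-h+1+d/2)}{(m+d/2)\Gamma(h+d/2)\Gamma(2m-2h+1)}
= 2^{\frac d 2-h}\fr{2\pi}{\Gamma(h+d/2)}\,2^{2m}e^{-2m}m^{2m-h+\frac d2}\bigl(1+\bo(m^{-1})\bigr),
\]
and the lower bound $\diri{1}{m,m-h}(\mathbb B)\ge\diri{1}{m-h}(\mathbb B)$, which is simply inequality~{\rm ii.} of Theorem~\ref{thmE} (monotonicity in the differential order, Theorem~\ref{sameorder}) applied $h$ times starting from $\diri{1}{m-h}(\mathbb B)=\diri{1}{m-h,m-h}(\mathbb B)$ and raising the first index by one at each step while keeping $t=m-h$ fixed. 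The first displayed assertion is then immediate: the upper bound above is exactly $\bo(2^{2m}e^{-2m}m^{2m-h+d/2})$, and the lower bound is retained for reference (its own asymptotics follow from the Corollary to Lemma~\ref{lemeineqaux} with $m$ replaced by $m-h$).

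For the second displayed assertion I would take $\tfrac1{2(m-h)}$ powers. Taking logarithms of the upper bound and using the Stirling expansion~\eqref{stirlingnist} (exactly as in the proof of the Corollary preceding this theorem), the dominant contribution to $\log\diri{1}{m,m-h}(\mathbb B)$ is $2m\log\frac{2m}{e}$, with the remaining terms — those coming from the $m^{-h+d/2}$ factor, the constant $2^{d/2-h}2\pi/\Gamma(h+d/2)$, and the $\bo(m^{-1})$ error — all of size $\bo(m\log m)$ or smaller. Dividing by $2(m-h)$ and writing $\frac{1}{m-h}=\frac1m\bigl(1+\bo(m^{-1})\bigr)$, the leading term becomes $\log\frac{2m}{e}+\bo\bigl(\frac{\log m}{m}\bigr)$, and exponentiating via $e^{x}=1+x+\bo(x^2)$ with $x=\bo\bigl(\frac{\log m}{m}\bigr)$ gives
\[
\left[\diri{1}{m,m-h}(\mathbb B)\right]^{\fr{1}{2(m-h)}}\le \fr{2m}{e}+\bo(\log m).
\]
For the matching lower bound of the same form, use $\diri{1}{m,m-h}(\mathbb B)\ge\diri{1}{m-h}(\mathbb B)=2^{2(m-h)}e^{-2(m-h)}(m-h)^{2(m-h)+d/2}\bigl(1+\bo(1)\bigr)$ from Lemma~\ref{lemeineqaux} (with $m-h$ in place of $m$); taking the $\tfrac1{2(m-h)}$ power yields $\frac{2(m-h)}{e}(1+\bo(\frac{\log m}{m}))=\frac{2m}{e}-\frac{2h}{e}+\bo(\log m)=\frac{2m}{e}+\bo(\log m)$. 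Combining the two bounds gives the claimed two-sided estimate.

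I do not expect any genuine obstacle here: the theorem is a direct corollary of Theorem~\ref{ballupperb}, Theorem~\ref{sameorder} (equivalently Theorem~\ref{thmE}~{\rm ii.}), and the Stirling bookkeeping already carried out in the previous Corollary. The only point requiring a little care is tracking that every $h$-dependent but $m$-independent factor (the power $m^{-h}$, the Gamma-function constant, the shift from $m$ to $m-h$ in the exponent and base) contributes only at the $\bo(\log m)$ level after taking the $\tfrac1{2(m-h)}$ root — i.e. that none of these survive into the leading $\frac{2m}{e}$ term — which is exactly the reason the error term here is the coarser $\bo(\log m)$ rather than the sharper $\frac{d}{2e}\log m+\bo(1)$ obtained in Corollary~\ref{normgrowth} for the case $h=0$.
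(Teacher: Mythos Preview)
Your proposal is correct and follows essentially the same route as the paper: the lower bound $\diri{1}{m-h}(\mathbb B)\le\diri{1}{m,m-h}(\mathbb B)$ via Theorem~\ref{sameorder}, the upper bound via Theorem~\ref{ballupperb}, and then Stirling bookkeeping on the $\tfrac{1}{2(m-h)}$ power of each. One small slip: the ``remaining terms'' in $\log\diri{1}{m,m-h}(\mathbb B)$ after $2m\log\tfrac{2m}{e}$ are $\bo(\log m)$, not merely ``$\bo(m\log m)$ or smaller'' --- the sharper bound is what actually gives $x=\bo(\tfrac{\log m}{m})$ after dividing by $2(m-h)$, which you then use correctly.
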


\begin{proof}
 The inequality
$$
\diri{1}{m-h}(\mathbb B)\le\diri{1}{m,m-h}(\mathbb B)
$$
comes directly from Theorem \ref{sameorder}. We observe that $\diri{1}{m-h}(\mathbb B)=\Theta\left(2^{2m}e^{-2m}m^{2m-2h+\frac{d}{2}}\right)$ and
$$
\left(\diri{1}{m-h}(\mathbb B)\right)^{1/(2m-2h)}=\frac{2m}{e}+\frac{d}{4e}\log m+\bo (1).
$$

On the other hand, from Theorem~\ref{ballupperb}
$$
\begin{array}{lll}
\left[\diri{1}{m,m-h}(\mathbb B)\right]^{\frac{1}{2(m-h)}} & \le & \left(\fr{2^{2m-2h}\Gamma^2(m-h+1)\Gamma(2m-h+1+d/2)}{(m+d/2)\Gamma(h+d/2)\Gamma(2m-2h+1)}\right)^{\frac{1}{2(m-h)}}\eqskip
& = & \fr{2m}{e}+\fr{2h+d}{4e}\log m+\bo(1).
\end{array}
$$
\end{proof}

\begin{remark}
Should the behaviour of the first eigenvalue of problem~\eqref{auxbuck} be as suggested in Remark~\ref{krarg}, namely, that
the corresponding eigenfunction is of one sign also when $m>t$, then Lemma~\ref{fund} would also provide a lower bound in this
case. Since this has the same asymptotic behaviour as the upper bound in the above theorem, namely, $\sigma=\Theta\left(2^{2m}e^{-2m}m^{2m-h+\frac{d}{2}}\right)$,
this would then give the precise behaviour for $\diri{1}{m,m-h}(\mathbb B)$.
\end{remark}


\subsection{Other asymptotic estimates: general domains and hyperrectangles}

Using the above results together with the rescaling property \eqref{scaling}, we may now discuss the case of a general domain, as well as the behaviour of eigenvalues
other than the fundamental tone. To do this, we shall use an inclusion argument which is basically a variant of that used by P\'{o}lya in~\cite{polya}.
More precisely, given a bounded domain $\Omega$, it is always possible to enclose it by a disjoint union of $k$ identical balls
$\overline\Omega={\ds \bigsqcup_{p=1}^k} \varrho_e \mathbb B$, where the parameter $\varrho_e$ can be chosen to be half the diameter of $\Omega$. In fact, it is possible
to consider the circumscribed ball to $\Omega$ together with $(k-1)$ disjoint copies. This then implies
$$
\lambda_k^{(m,m-h)}(\Omega)\ge \lambda_k^{(m,m-h)}(\overline\Omega)=\varrho_e^{-2(m-h)}\lambda_1^{(m,m-h)}(\mathbb B).
$$
On the other hand, it is also possible to find $k$ identical balls $\underline\Omega={\ds \bigsqcup_{p=1}^k} \varrho_i \mathbb B$ enclosed in $\Omega$ (the parameter
$\varrho_i$ can be taken to be the $k$-th part of the inradius). This implies
$$
\lambda_k^{(m,m-h)}(\Omega)\le \lambda_k^{(m,m-h)}(\underline\Omega)=\varrho_i^{-2(m-h)}\lambda_1^{(m,m-h)}(\mathbb B),
$$
proving Theorem~\ref{maingeral}.


On the other hand, the scaling property~\eqref{scaling} together with Theorem~\ref{ballasympt} immediately yield the following asymptotic expansion
$$
\left[\diri{1}{m,m-h}(\varrho\mathbb B)\right]^{\fr{1}{2(m-h)}}=\fr{2m}{e\varrho }+\bo(\log m),
$$
where $\varrho\mathbb B$ is the ball of radius $\varrho$ centred at the origin. For this reason, and also influenced by the scaling in the Weyl asymptotics~\eqref{weylmt},
it would be natural to conjecture that (at least) the first term in the order asymptotic would depend on the volume $|\Omega|$. However, as was already pointed out in
the~Introduction, we see that this is not the case. More precisely, we show that in the case of hyperrectangles the only geometric quantity to appear in this first term
is related to its inradius.

\begin{thm}\label{hrect}
Let $R\subseteq\mathbb R^d$ be the hyperrectangle
$
R={\ds \prod_{p=1}^d}(-a_p,a_p),
$
where $\min_p a_p=\ab$. Then
$$
\left[\lambda_1^{(m)}(R)\right]^{\frac{1}{2m}}=\frac{2m}{e\ab }+\bo(\log m),
$$
for $m\to\infty$.
\end{thm}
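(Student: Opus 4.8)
The plan is to squeeze $\diri{1}{m}(R)$ between two quantities whose $(2m)^{\rm th}$ roots both equal $\fr{2m}{e\ab}+\bo(\log m)$, deducing everything from the asymptotics already established for the ball (Theorem~\ref{thmA}, Corollary~\ref{normgrowth}) together with the scaling relation~\eqref{scaling}.

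\textbf{Upper bound.} Since $\ab=\min_p a_p$, the ball $\ab\mathbb B$ centred at the origin is contained in $R$. Extension by zero of $H^m_0(\ab\mathbb B)$-functions, together with the same inclusion/monotonicity argument used in the proof of Theorem~\ref{maingeral}, gives $\diri{1}{m}(R)\le\diri{1}{m}(\ab\mathbb B)$, which by~\eqref{scaling} equals $\ab^{-2m}\diri{1}{m}(\mathbb B)$. Corollary~\ref{normgrowth} then yields immediately $\left[\diri{1}{m}(R)\right]^{1/(2m)}\le\ab^{-1}\left[\diri{1}{m}(\mathbb B)\right]^{1/(2m)}=\fr{2m}{e\ab}+\fr{d}{2e\ab}\log m+\bo(1)$.

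\textbf{Lower bound.} For the matching lower bound I would use a one-dimensional slicing argument in the shortest direction. Fix an index $p_0$ with $a_{p_0}=\ab$ and write points of $R$ as $(x_{p_0},\hat x)$. For $u\in C^\infty_c(R)$, the Frobenius-form identity for the Dirichlet form (the equivalence between~\eqref{first} and its $\nabla^m$ version recorded in Section~\ref{sec.ineq}, giving $\int_R|\nabla^m u|^2=\sum_{|\alpha|=m}\fr{m!}{\alpha!}\int_R(\partial^\alpha u)^2$) shows that $\int_R|\nabla^m u|^2$ dominates the single term $\int_R(\partial^m_{x_{p_0}}u)^2$, since all other terms are nonnegative. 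For a.e.\ fixed $\hat x$ the slice $x_{p_0}\mapsto u(x_{p_0},\hat x)$ lies in $C^\infty_c((-\ab,\ab))\subset H^m_0(-\ab,\ab)$, so the one-dimensional Rayleigh quotient bound on $(-\ab,\ab)$, integrated in $\hat x$ by Fubini, gives $\diri{1}{m}(-\ab,\ab)\int_R u^2\le\int_R|\nabla^m u|^2$. Taking the infimum over $C^\infty_c(R)$ (dense in $H^m_0(R)$) in the Rayleigh quotient~\eqref{rayleighbuckling} yields $\diri{1}{m}(R)\ge\diri{1}{m}(-\ab,\ab)=\ab^{-2m}\diri{1}{m}(-1,1)$ by~\eqref{scaling}; Corollary~\ref{normgrowth} (equivalently Remark~\ref{remerve}) with $d=1$ then turns the right-hand side into $\fr{2m}{e\ab}+\fr1{2e\ab}\log m+\bo(1)$ after taking the $(2m)^{\rm th}$ root. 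Combining the two estimates gives the stated asymptotics.

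The scaling and asymptotic bookkeeping is harmless; the one place requiring care is the slicing step — verifying that restriction to a coordinate line maps (a dense subspace of) $H^m_0(R)$ into $H^m_0$ of the shortest interval, so that the Dirichlet conditions are inherited in that direction, and that $\int_R|\nabla^m u|^2$ controls the pure $x_{p_0}$-derivative term, which is exactly the nonnegativity of the off-diagonal contributions in the Frobenius representation of the Dirichlet form. Everything else reduces to the ball estimates already in hand.
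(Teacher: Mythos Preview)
The proposal is correct and follows essentially the same strategy as the paper: domain monotonicity via $\ab\mathbb B\subseteq R$ for the upper bound, and one-dimensional slicing combined with the Frobenius representation $\int_R|\nabla^m u|^2=\sum_{|\alpha|=m}\tfrac{m!}{\alpha!}\int_R(\partial^\alpha u)^2$ for the lower bound. The only cosmetic differences are that the paper retains all $d$ pure-directional terms (yielding $\lambda_1^{(m)}(R)\ge(2m)!\sum_p a_p^{-2m}$) rather than just the one in the shortest direction, and invokes the explicit constant $(2m)!$ from Theorem~\ref{eineqaux} with $d=1$ in place of your appeal to $\lambda_1^{(m)}(-1,1)$ and Corollary~\ref{normgrowth}.
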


\begin{proof}
We first note that, since $\ab\mathbb B\subseteq R$, from the Rayleigh quotient we have
$
\lambda_1^{(m)}(R)\le\lambda_1^{(m)}(\ab \mathbb B),
$
while for a lower bound we make careful use of the one-dimensional bounds. From~\eqref{lb-est} we have
\begin{equation*}
\int_{-1}^1u^{(m)}(x)^2dx\ge (2m)!\int_{-1}^1u(x)^2dx,
\end{equation*}
for any function $u\in C^m_0(-1,1)$, implying that
$$
\int_{-a_p}^{a_p}u^{(m)}(x)^2dx\ge (2m)! a_p^{-2m}\int_{-a_p}^{a_p} u(x)^2dx,
$$
for any function $u\in C^m_0(-a_p,a_p)$. Therefore, applying Fubini's theorem we obtain
\begin{equation*}
\int_{R}\partial^\alpha u(x)^2dx\ge (2\alpha)!a^{-2\alpha}\int_{R}u(x)^2dx,
\end{equation*}
for any function $u\in C^m_0(R)$, and for any multi-index $\alpha$ such that $|\alpha|\le m$, where we have used
the multi-index notation $a^{-2\alpha}=\prod_p (a_p)^{-2\alpha_p}$. Now, integrating by parts we obtain
$$
\int_R|\nabla^m u|^2=\int_R|D^m u|^2,
$$
where $D^m u$ represents the tensor of derivatives of order $m$, and in particular
$
|D^mu|^2=\dsum_{|\alpha|=m}\frac{m!}{\alpha!}|\partial^\alpha u|^2.
$
Therefore
\begin{equation*}
\fr{\int_{R}|D^m u(x)|^2dx}{\int_{R}|u(x)|^2dx}
\ge \sum_{|\alpha|=m}\frac{m!(2\alpha)!}{\alpha!}a^{-2\alpha}\ge (2m)!\sum_{p=1}^d\frac 1{a_p^{2m}},
\end{equation*}
for any $u\in C^{m}_0(R)$, so taking the infimum we obtain
$$
\lambda_1^{(m)}(R)\ge (2m)!\sum_{p=1}^d\frac 1{a_p^{2m}}\ge \frac{(2m)!}{\ab^{2m}}.
$$
Since
$
\left[(2m)!\right]^{\fr{1}{2m}}=\fr{2m}{e}+\fr 1 e \log m+\bo(1),
$
this implies
$
\left[\lambda_1^{(m)}(R)\right]^{\fr{1}{2m}}=\fr{2m}{e\ab}+\bo(\log m).
$
\end{proof}


\section{On an inequality of Payne's}\label{payneineq}

We now present a generalization of an inequality due to Payne.

\begin{thm}
For any domain $\Omega$ in $\mathbb R^d$ with finite measure, for any $m,t \in\mathbb N$ one has
\begin{equation}
\label{truegenpayne}
\diri{2}{m,t}(\Omega)\le\diri{1}{m+1,t}(\Omega).
\end{equation}
\end{thm}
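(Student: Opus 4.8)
The plan is to feed the variational characterization~\eqref{rayleighbuckling} of $\diri{2}{m,t}(\Omega)$ a carefully chosen two–dimensional test space built out of a first eigenfunction $u$ of problem~\eqref{dirichletbuckling} with parameters $(m+1,t)$, very much in the spirit of Payne's proof of~\eqref{payneineq11}. Write $\Lambda:=\diri{1}{m+1,t}(\Omega)$, so that $\Lambda$ is the $(m+1,t)$–Rayleigh quotient of $u$. Since $u\in H^{m+1}_0(\Omega)$, both $u$ and every $\partial_i u$ lie in $H^m_0(\Omega)$ and are thus admissible competitors; the theorem follows once I produce a two–dimensional $W\subset H^m_0(\Omega)$ on which the $(m,t)$–Rayleigh quotient never exceeds $\Lambda$, the two generators of $W$ being $u$ itself and a suitable first derivative of $u$.

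First I would record two facts about $u$. (a) By inequality~\eqref{firststepsame1} the ratios $\int_\Omega|\nabla^{j}u|^2/\int_\Omega|\nabla^{j-1}u|^2$ are nondecreasing in $j$ (for $j\le m+1$, since $u\in H^{m+1}(\Omega)\cap H^m_0(\Omega)$); writing both $\int_\Omega|\nabla^{m}u|^2/\int_\Omega|\nabla^{m-t}u|^2$ and $\Lambda=\int_\Omega|\nabla^{m+1}u|^2/\int_\Omega|\nabla^{m+1-t}u|^2$ as products of $t$ consecutive such ratios gives the former $\le\Lambda$, i.e.\ the $(m,t)$–Rayleigh quotient of $u$ is already $\le\Lambda$. (b) From the pointwise identity $\sum_{i=1}^d|D^{j}\partial_i u|^2=|D^{j+1}u|^2$ and the standard equality $\int_\Omega|\nabla^{j}v|^2=\int_\Omega|D^{j}v|^2$ for $v\in H^{j}_0(\Omega)$ (applied with $v=\partial_i u$ and $v=u$, using $m-t\le m$), one gets $\sum_i\int_\Omega|\nabla^{m}\partial_i u|^2=\int_\Omega|\nabla^{m+1}u|^2$ and $\sum_i\int_\Omega|\nabla^{m-t}\partial_i u|^2=\int_\Omega|\nabla^{m+1-t}u|^2$, hence
\[
\sum_{i=1}^d\Bigl(\int_\Omega|\nabla^{m}\partial_i u|^2-\Lambda\int_\Omega|\nabla^{m-t}\partial_i u|^2\Bigr)=\int_\Omega|\nabla^{m+1}u|^2-\Lambda\int_\Omega|\nabla^{m+1-t}u|^2=0 .
\]
Since the terms with $\partial_i u=0$ vanish, some index $i_0$ with $\partial_{i_0}u\ne0$ satisfies $\int_\Omega|\nabla^{m}\partial_{i_0}u|^2\le\Lambda\int_\Omega|\nabla^{m-t}\partial_{i_0}u|^2$; set $v:=\partial_{i_0}u\in H^m_0(\Omega)$, so the $(m,t)$–Rayleigh quotient of $v$ is also $\le\Lambda$.

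The key step is that $u$ and $v$ are orthogonal simultaneously in $H^m_0(\Omega)$ and in $H^{m-t}_0(\Omega)$: for $k\in\{m-t,\,m\}$,
\[
\int_\Omega\nabla^k u\cdot\nabla^k v=\int_\Omega\nabla^k u\cdot\nabla^k\partial_{i_0}u=\tfrac12\int_\Omega\partial_{i_0}\bigl(|\nabla^k u|^2\bigr)=0 ,
\]
because $\nabla^k u$ involves only derivatives of $u$ of order $k\le m$, all of which have vanishing trace on $\partial\Omega$ since $u\in H^{m+1}_0(\Omega)$ — for an arbitrary domain of finite measure this last equality is obtained by first running the computation for $u\in C^\infty_c(\Omega)$, where $|\nabla^k u|^2$ is compactly supported so the integral of its derivative is zero, and then passing to the limit. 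Hence $W:=\mathrm{span}\{u,v\}$ is two–dimensional (an orthogonal pair of nonzero vectors), and for every $w=\alpha u+\beta v\in W\setminus\{0\}$ the mediant inequality gives
\[
\frac{\int_\Omega|\nabla^m w|^2}{\int_\Omega|\nabla^{m-t}w|^2}=\frac{\alpha^2\int_\Omega|\nabla^m u|^2+\beta^2\int_\Omega|\nabla^m v|^2}{\alpha^2\int_\Omega|\nabla^{m-t}u|^2+\beta^2\int_\Omega|\nabla^{m-t}v|^2}\le\max\Bigl(\tfrac{\int_\Omega|\nabla^m u|^2}{\int_\Omega|\nabla^{m-t}u|^2},\,\tfrac{\int_\Omega|\nabla^m v|^2}{\int_\Omega|\nabla^{m-t}v|^2}\Bigr)\le\Lambda .
\]
Inserting $W$ into~\eqref{rayleighbuckling} yields $\diri{2}{m,t}(\Omega)\le\Lambda=\diri{1}{m+1,t}(\Omega)$.

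The part I expect to be the real obstacle is conceptual rather than computational: one must recognize that the second generator of $W$ has to be $u$ itself, not, say, a first eigenfunction of the $(m,t)$ problem (orthogonalizing an arbitrary competitor against a lower–order eigenfunction introduces correction terms with the wrong sign and the argument collapses), and then notice that the full strength of the boundary conditions $u\in H^{m+1}_0(\Omega)$ is exactly what is needed to kill the order–$m$ cross term $\int_\Omega\nabla^m u\cdot\nabla^m\partial_{i_0}u$ — membership in $H^m_0$ alone would not suffice, which is in line with the analogue failing for intervals once higher differential orders enter. Everything else — the monotonicity of the ratios in (a), the combinatorial identity in (b), the choice of $i_0$, and the density argument legitimizing the vanishing boundary integrals and the integrations by parts on a general domain — is routine.
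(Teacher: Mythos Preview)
Your proof is correct, but it takes a different route from the paper's. The paper follows Payne's original strategy more closely: it brings in \emph{both} a first eigenfunction $w_m$ of the $(m,t)$ problem and a first eigenfunction $w_{m+1}$ of the $(m+1,t)$ problem, and splits into two cases according to whether $\int_\Omega\nabla^{m-t}w_m\cdot\nabla^{m-t}w_{m+1}$ vanishes. In the non-degenerate case it uses the single test functions $\psi_j=a_jw_{m+1}+\partial_jw_{m+1}$, chosen orthogonal to $w_m$, and then sums the resulting bounds over all $j$ (exploiting the same identity $\sum_j\int|\nabla^k\partial_ju|^2=\int|\nabla^{k+1}u|^2$ you use); in the degenerate case it takes $\mathrm{span}\{w_m,w_{m+1}\}$ directly. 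Your argument is cleaner: it avoids the case distinction and never touches $w_m$ at all, instead producing the two-dimensional space $\mathrm{span}\{u,\partial_{i_0}u\}$ from $u=w_{m+1}$ alone, with a single good index $i_0$ selected by the averaging trick. The trade-off is that your approach needs step~(a) (the monotonicity of ratios, i.e.\ the content of Theorem~\ref{sameorder}) to control the Rayleigh quotient of $u$ itself, whereas the paper's approach uses that same inequality only inside the summed bound. One small point: your closing remark that the second generator ``has to be $u$ itself, not a first eigenfunction of the $(m,t)$ problem'' is too strong --- the paper shows precisely that the latter choice \emph{does} work, at the cost of Friedlander's case analysis.
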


\begin{remark}
We remark that Payne proved inequality~\eqref{truegenpayne} in the case $m=t=1$~\cite[inequality~(45)]{payne}, however his proof contained a gap later filled in by
Friedlander~\cite{fried04}. We adapt here those arguments.
\end{remark}

\begin{proof}
Let us denote by $w_m$ an eigenfunction associated with $\diri{1}{m,t}(\Omega)$ and by $w_{m+1}$ an eigenfunction associated with $\diri{1}{m+1,t}(\Omega)$. We distinguish two cases.

{\bf Case $\int_\Omega \nabla^{m-t}w_m\cdot \nabla^{m-t}w_{m+1}\neq 0$.} In this case we may define the coefficients
$$\begin{array}{ll}
a_j=-\fr{\int_\Omega \nabla^{m-t}w_m\cdot \nabla^{m-t}\partial_j w_{m+1}}{\int_\Omega \nabla^{m-t}w_m\cdot \nabla^{m-t}w_{m+1}}, 
 & j=1,\dots,d,
 \end{array}
$$
where $\partial_j$ denotes the $j$-th partial derivative. With this definition, the function
$
\psi_j=a_j w_{m+1}+\partial_j w_{m+1}
$
is orthogonal to $w_m$ in the scalar product of $H^m_0(\Omega)$, making it a viable test function for the Rayleigh quotient of $\diri{2}{m,t}(\Omega)$ and in particular
$$
\diri{2}{m,t}(\Omega)\le \fr{\int_\Omega |\nabla^{m}\psi_j|^2}{\int_\Omega |\nabla^{m-t}\psi_j|^2}=\fr{a_j^2\int_\Omega |\nabla^{m}w_{m+1}|^2
+\int_\Omega |\nabla^{m}\partial_jw_{m+1}|^2}{a_j^2\int_\Omega |\nabla^{m-t}w_{m+1}|^2+\int_\Omega |\nabla^{m-t}\partial_jw_{m+1}|^2},
$$
where we used the fact that
$$
\int_\Omega \nabla^{p}w_{m+1}\cdot\nabla^p\partial_j w_{m+1}=\fr{1}{2}\int_\Omega\partial_j|\nabla^p w_{m+1}|^2=0
$$
for any $p\le m$. Now, since
$$
\alpha\le \fr{p_j}{q_j}\ \Rightarrow\ q_j\alpha\le p_j\ \Rightarrow \alpha\sum q_j\le\sum p_j,
$$
we deduce that
$$
\diri{2}{m,t}(\Omega)\le \fr{\left(\sum_ja_j^2\right)\int_\Omega |\nabla^{m}w_{m+1}|^2+\int_\Omega |\nabla^{m+1}w_{m+1}|^2}{\left(\sum_ja_j^2\right)\int_\Omega |\nabla^{m-t}w_{m+1}|^2+\int_\Omega |\nabla^{m+1-t}w_{m+1}|^2},
$$
and from~\eqref{firststepsame1} we get
$$
\diri{2}{m,t}(\Omega)\le \fr{\left(\sum_ja_j^2\right)\fr{\int_\Omega |\nabla^{m+1}w_{m+1}|^2}{\int_\Omega |\nabla^{m+1-t}w_{m+1}|^2}\int_\Omega |\nabla^{m-t}w_{m+1}|^2+\int_\Omega |\nabla^{m+1}w_{m+1}|^2}{\left(\sum_ja_j^2\right)\int_\Omega |\nabla^{m-t}w_{m+1}|^2+\int_\Omega |\nabla^{m+1-t}w_{m+1}|^2}.
$$
Using the fact that $\int_\Omega |\nabla^{m+1}w_{m+1}|^2=\diri{1}{m+1,t} \int_\Omega |\nabla^{m+1-t}w_{m+1}|^2$ allows us to conclude.

{\bf Case $\int_\Omega \nabla^{m-t}w_m\cdot \nabla^{m-t}w_{m+1}= 0$.} In this case we know that $w_m$ and $w_{m+1}$ must be linearly independent, so we can consider the 2-dimensional space they generate within $H^m_0(\Omega)$ as a test space for $\diri{2}{m,t}$. In particular, any function $\psi$ in such space will be of the form $a w_m+bw_{m+1}$ for some $a,b\in\mathbb R$. In addition we also have that 
$$
\int_\Omega \nabla^{m}w_m\cdot \nabla^{m}w_{m+1}=\diri{1}{m,t}\int_\Omega \nabla^{m-t}w_m\cdot \nabla^{m-t}w_{m+1}=0.
$$
Now we can compute
\[
\begin{array}{lll}
\int_\Omega |\nabla^m(a w_m+bw_{m+1})|^2
& = & a^2\int_\Omega |\nabla^m w_m|^2+b^2\int_\Omega |\nabla^m w_{m+1}|^2+2ab\int_\Omega \nabla^m w_m\cdot\nabla^m w_{m+1}\eqskip
& = & a^2\diri{1}{m,t}\int_\Omega |\nabla^{m-t} w_m|^2+b^2\int_\Omega |\nabla^m w_{m+1}|^2\eqskip
& \le &  a^2\diri{1}{m+1,t}\int_\Omega |\nabla^{m-t} w_m|^2+b^2\diri{1}{m+1,t}\int_\Omega |\nabla^{m-t} w_{m+1}|^2\eqskip
& = & \diri{1}{m+1,t}\int_\Omega |\nabla^{m-t}(a w_m+bw_{m+1})|^2,
\end{array}
\]
yielding
$$
\diri{2}{m,t}\le\fr{\int_\Omega |\nabla^m(a w_m+bw_{m+1})|^2}{\int_\Omega |\nabla^{m-t}(a w_m+bw_{m+1})|^2}\le \diri{1}{m+1,t}.
$$
\end{proof}

The considerations in Appendix~\ref{ballsection} and inequality~\eqref{truegenpayne} in the original version comparing the Dirichlet Laplacian eigenvalues and
those of the buckling problem led to the conjecture that the same shift-type inequality should be valid for all eigenvalues (see \cite{friedp,liup}). We state
it here in our general context.

\begin{conj*}[Generalised Payne buckling inequality]
For any domain $\Omega$ in $\mathbb R^d$ with finite measure, for any $k,m,t \in\mathbb N$ one has
\begin{equation}
\label{dtntrue}
\diri{k+1}{m,t}(\Omega)\le\diri{k}{m+1,t}(\Omega).
\end{equation}
\end{conj*}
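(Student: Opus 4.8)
The plan is to run the Payne--Friedlander test-function scheme that gave~\eqref{truegenpayne}, now at level $k$. Write $u_1,\dots,u_k$ for the first $k$ eigenfunctions of $\diri{}{m,t}(\Omega)$ and recall the constrained minimax
\[
\diri{k+1}{m,t}(\Omega)=\min\Big\{R_{m,t}[\psi]:0\ne\psi\in H^m_0(\Omega),\ \int_\Omega\nabla^{m-t}\psi\cdot\nabla^{m-t}u_l=0\ (l=1,\dots,k)\Big\},
\]
with $R_{m,t}[\psi]=\int_\Omega|\nabla^m\psi|^2/\int_\Omega|\nabla^{m-t}\psi|^2$; since each $u_l$ is an eigenfunction, orthogonality in the $\nabla^{m-t}$ product is the same as orthogonality in the $\nabla^m$ product. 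Let $w_1,\dots,w_k\in H^{m+1}_0(\Omega)$ be eigenfunctions of $\diri{}{m+1,t}(\Omega)$ normalised so that $\int_\Omega\nabla^{m+1-t}w_i\cdot\nabla^{m+1-t}w_{i'}=\delta_{ii'}$, whence $\int_\Omega\nabla^{m+1}w_i\cdot\nabla^{m+1}w_{i'}=\diri{i}{m+1,t}(\Omega)\,\delta_{ii'}$; since $w_i\in H^{m+1}_0(\Omega)$ one also has $\partial_j w_i\in H^m_0(\Omega)$. The basic observation is that on $\mathrm{span}\{w_1,\dots,w_k\}$ the quadratic form $v\mapsto\int_\Omega|\nabla^m v|^2-\diri{k}{m+1,t}(\Omega)\int_\Omega|\nabla^{m-t}v|^2$ is nonpositive: for $v=\sum_i c_i w_i$, the iterated inequality~\eqref{firststepsame1} used in the proof of Theorem~\ref{sameorder} gives $\int_\Omega|\nabla^m v|^2\le\big(\int_\Omega|\nabla^{m+1}v|^2/\int_\Omega|\nabla^{m+1-t}v|^2\big)\int_\Omega|\nabla^{m-t}v|^2$, and the right-hand ratio equals $\sum_i c_i^2\,\diri{i}{m+1,t}(\Omega)/\sum_i c_i^2\le\diri{k}{m+1,t}(\Omega)$. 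Thus this $k$-dimensional space already has Rayleigh quotient at most $\diri{k}{m+1,t}(\Omega)$, and the whole task is to adjoin one further dimension, orthogonal to $u_1,\dots,u_k$, without destroying the bound.

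I would then split, as in~\eqref{truegenpayne}, on the $k\times k$ overlap matrix $A=(A_{li})$, $A_{li}=\int_\Omega\nabla^{m-t}w_i\cdot\nabla^{m-t}u_l$. If $A$ is singular, picking $0\ne(c_i)$ in its kernel makes $v_0=\sum_i c_i w_i$ a nonzero admissible test function lying in $\mathrm{span}\{w_i\}$, so the observation above yields $\diri{k+1}{m,t}(\Omega)\le R_{m,t}[v_0]\le\diri{k}{m+1,t}(\Omega)$, and in this case we are done. If $A$ is invertible, then (after replacing $w_k$ by a suitable $w_i$ if needed) I would solve, for each coordinate $j=1,\dots,d$, the $k\times k$ linear system for $a_{1j},\dots,a_{kj}$ that renders
\[
\psi_j=\sum_{i=1}^k a_{ij}w_i+\partial_j w_k
\]
orthogonal to every $u_l$; each nonzero $\psi_j$ is admissible, so $\diri{k+1}{m,t}(\Omega)\le R_{m,t}[\psi_j]$, and by the mediant inequality (``$p_j/q_j\ge\alpha$ for all $j$ $\Rightarrow$ $\sum p_j/\sum q_j\ge\alpha$'') used in the case $k=1$ it suffices to show $\sum_j\big(\int_\Omega|\nabla^m\psi_j|^2-\diri{k}{m+1,t}(\Omega)\int_\Omega|\nabla^{m-t}\psi_j|^2\big)\le0$. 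Setting $v_{0,j}=\sum_i a_{ij}w_i$ and using $\int_\Omega\nabla^p w_k\cdot\nabla^p\partial_j w_k=\tfrac12\int_\Omega\partial_j|\nabla^p w_k|^2=0$, this sum decomposes as
\begin{multline*}
\sum_j\Big(\int_\Omega|\nabla^m v_{0,j}|^2-\diri{k}{m+1,t}(\Omega)\int_\Omega|\nabla^{m-t}v_{0,j}|^2\Big)\\
+\Big(\int_\Omega|\nabla^{m+1}w_k|^2-\diri{k}{m+1,t}(\Omega)\int_\Omega|\nabla^{m+1-t}w_k|^2\Big)\\
+2\sum_{j=1}^d\sum_{i=1}^{k-1}a_{ij}\Big(\int_\Omega\nabla^m w_i\cdot\nabla^m\partial_j w_k-\diri{k}{m+1,t}(\Omega)\int_\Omega\nabla^{m-t}w_i\cdot\nabla^{m-t}\partial_j w_k\Big),
\end{multline*}
where the first sum is nonpositive by the basic observation, the middle term vanishes because $w_k$ is an eigenfunction of $\diri{}{m+1,t}(\Omega)$, and the last term is the cross term.

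The main obstacle --- and the reason~\eqref{dtntrue} remains only a conjecture --- is precisely this cross term. When $k=1$ there is a single eigenfunction, the range $1\le i\le k-1$ is empty, and the only derivative cross term $\int_\Omega\nabla^p w\cdot\nabla^p\partial_j w$ vanishes identically, so the estimate closes. For $k\ge2$ the quantities $\int_\Omega\nabla^m w_i\cdot\nabla^m\partial_j w_k$ and $\int_\Omega\nabla^{m-t}w_i\cdot\nabla^{m-t}\partial_j w_k$ with $i<k$ need not vanish, the coefficients $a_{ij}$ produced by the orthogonality system carry no usable symmetry, and the sum over $j$ yields neither a telescoping nor a sign. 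Replacing $\partial_j w_k$ by $\partial_j$ of another eigenfunction, or averaging the construction over which $w_i$ is differentiated, merely relabels these terms: the antisymmetry $\int_\Omega\nabla^p w_i\cdot\nabla^p\partial_j w_{i'}=-\int_\Omega\nabla^p w_{i'}\cdot\nabla^p\partial_j w_i$ would force cancellation only if the orthogonality coefficients were themselves symmetric in $(i,i')$, which they are not in general; and one cannot absorb the cross term by re-choosing a basis of $\mathrm{span}\{w_i\}$, since the $w_i$ cannot in general be made simultaneously orthogonal in the $\nabla^m$ and the $\nabla^{m-t}$ inner products. A proof along these lines would therefore require either a canonical choice of the adjoined direction for which the cross term is nonpositive, or an altogether different argument that does not pass through the eigenfunctions of $\diri{}{m+1,t}(\Omega)$.
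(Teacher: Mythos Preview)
Your write-up is not a proof, and you say as much: you run the Payne--Friedlander scheme at level $k$, identify the cross term that fails to cancel for $k\ge2$, and stop. That is appropriate, because the statement you were given is a \emph{conjecture}. The paper does not prove~\eqref{dtntrue}; it states it as open, and the surrounding discussion in Section~\ref{payneineq} says exactly what you discovered: Payne's argument does not extend past $k=1$, and Friedlander's correction works for every $k$ only under an orthogonality condition that is ``substantial'' --- your singular-$A$ case. Your decomposition of the obstruction into a nonpositive piece on $\mathrm{span}\{w_i\}$, a vanishing eigenfunction piece, and an uncontrolled cross term is a clean and accurate diagnosis of why the method stalls.

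One point you should be aware of: the paper goes further and \emph{disproves}~\eqref{dtntrue} in dimension one. Theorem~\ref{pineq1d} shows that for $t=1$ on the interval $(-1,1)$ the eigenvalues are $\{j_{m\pm1/2,k}^2\}$, and the interlacing of Bessel zeroes gives the \emph{reverse} inequality $\diri{k}{m+1,1}(-1,1)\le\diri{k+1}{m,1}(-1,1)$; in particular~\eqref{dtntrue} fails for even $k$ and is an equality for odd $k$. So the cross term you isolated is not merely a technical nuisance awaiting a cleverer test function --- in one dimension the inequality is actually false, and any purported general proof would have to break down somewhere. The paper suggests this one-dimensional failure may be tied to the simplicity of all eigenvalues there, a feature that does not persist on higher-dimensional balls.
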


We observe that Corollary \ref{buckevs} strongly hints towards the validity of inequality \eqref{dtntrue} whenever $\Omega$ is a ball. In fact, even though the inequality
itself cannot be improved any further in this case, we know that when $m$ grows the operator ``looses'' all the eigenvalues coming from
$
\det L(t,h)=0,
$
for $h<m-t$. Notice also that, if the interlacing of zeroes similar to those of the Bessel functions were proved, the validity of inequality \eqref{dtntrue} for balls would be an
immediate consequence, and in particular we deduce that it holds at least in the case $t=1,2$.

However, for domains other than balls a proof of \eqref{dtntrue} seems out of reach at the moment. We note that Payne's argument \cite{payne} in the form used to
prove~\eqref{truegenpayne} does not apply for $k>1$. On the other hand, Friedlander's correction \cite{fried04} works for every $k$ but only under an orthogonality condition that
appears to be substantial.

Another possible approach is the use of a Dirichlet-to-Neumann argument. While such an argument is very elegant, it relies on the use of specific test functions that, however,
seem extremely difficult to individuate in this situation. This approach was used by Liu \cite{liup}, but unfortunately his proof contained a gap in the use of
test functions (see also \cite{friedp}). We also tried a Dirichlet-to-Neumann approach but considering intermediate (Navier) operators, which again turned out
to be problematic concerning the use of test functions. While we think that it may be possible to construct such test functions relying on a good use of layer potentials and/or Green
kernel representations, we however suspect that the conjecture is false for a general domain $\Omega$. To this end, we mention that in~\cite{blps} it was shown that, in the
one-dimensional case, the original conjecture $\diri{k+1}{1,1}\le\diri{k}{2,1}$ is false. It is not clear though how this relates to the higher--dimensional case. For completeness,
we show here that~\eqref{dtntrue} is false in the one-dimensional setting for $t=1$ and any choice of $m\ge 1$.

To this end, we recall that problem \eqref{dirichletbuckling} in dimension one reads
\begin{equation}
\label{dirbuc1}
\begin{cases}
(-1)^m u^{(2m)}=\diri{}{m,t} (-1)^{m-t}u^{(2m-2t)}, & \text{for }x\in(-1,1),\eqskip
u^{(p)}(\pm 1)=0, & \text{for }p=0,1,\dots,m-1.
\end{cases}
\end{equation}

A differential equation of the form~\eqref{dirbuc1} is usually solved through the associated characteristic equation, which in this context is still possible
but provides little insight. Alternatively, building upon the concept that it is always possible to separate even and odd eigenfunctions, we see that solutions
of~\eqref{dirbuc1} can be analysed as those of the higher-dimensional ball, the crucial difference being spherical harmonic functions that do not make much
sense for $d=1$. Nevertheless, it is possible to interpret spherical harmonics on $\mathbb S^0=\{\pm1\}$ as even and odd functions (to reflect the dichotomy present
in the eigenfunctions) which, in contrast to what happens in higher dimensions, here then only translate into two families of eigenfunctions: even and odd ones. Even
eigenfunctions are written in terms of $\tilde J_0=J_{-1/2}$, odd ones in terms of $\tilde J_1=J_{1/2}$. In this way, it becomes possible to write somewhat explicitly all
the eigenfunctions in the same fashion as for the higher dimension case. In particular, this argument allows us to prove the following result.

\begin{thm}
\label{pineq1d}
Let $t=1$, $m\in\mathbb N$. The eigenvalues of problem~\eqref{dirbuc1} are all simple and are given by $\left\{j_{m\pm\fr12,k}^2\right\}_{k\in\mathbb N}$. In particular, for all $m$ there is
a countable set of eigenvalues of problem \eqref{dirbuc1} which coincide with a countable set of eigenvalues of problem~\eqref{dirbuc1} where $m$ is replaced by $m+1$.
In addition,
$$
\diri{k}{m,1}(-1,1)\le \diri{k}{m+1,1}(-1,1)\le\diri{k+1}{m,1}(-1,1),\quad\forall m\in\mathbb N.
$$
\end{thm}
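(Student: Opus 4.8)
The plan is to exploit the reflection symmetry $x\mapsto -x$, which leaves both the equation in~\eqref{dirbuc1} and its boundary conditions invariant, so that every eigenspace is stable under $u(x)\mapsto u(-x)$ and splits into an even and an odd part; it then suffices to treat the two parities separately. First I would integrate the equation: $u^{(2m)}=-\lambda u^{(2m-2)}$ forces $u^{(2m-2)}$ to solve $w''+\lambda w=0$, so that $u=a\cos(\sqrt\lambda\,x)+b\sin(\sqrt\lambda\,x)+p(x)$ with $p$ a polynomial of degree at most $2m-3$; hence the even, resp.\ odd, solutions of the ODE span an $m$-dimensional space. Since by parity the $2m$ boundary conditions reduce to the $m$ conditions $u^{(p)}(1)=0$, $p=0,\dots,m-1$, each parity produces an eigenvalue equation $D_{\mathrm{ev}}(\sqrt\lambda)=0$, resp.\ $D_{\mathrm{od}}(\sqrt\lambda)=0$, given by an explicit $m\times m$ determinant. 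As every eigenfunction is the sum of its even and odd parts, each again an eigenfunction for the same $\lambda$, the eigenvalue set is exactly the union of the zero sets of $D_{\mathrm{ev}}(\cdot)^2$ and $D_{\mathrm{od}}(\cdot)^2$, so identifying these zeros also settles completeness of the list.

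The computational heart is to identify $D_{\mathrm{ev}}$ and $D_{\mathrm{od}}$ with single half-integer-order Bessel functions. I would do this by noting that, once $p$ is eliminated, the eigenvalue condition in each parity is precisely the one arising for the ball $\mathbb B\subset\mathbb R^1$ with spherical-harmonic degree $\ell=0$ (even case) and $\ell=1$ (odd case) --- this is the exact sense, discussed just before the statement, in which the one-dimensional eigenfunctions are the ``$\mathbb S^0$'' specialisation of the ball ones --- so that Corollary~\ref{buckevs}, applied at $t=1$ and $d=1$, gives $D_{\mathrm{ev}}(\mu)=0\iff J_{m-3/2}(\mu)=0$ and $D_{\mathrm{od}}(\mu)=0\iff J_{m-1/2}(\mu)=0$, in accordance with $\diri{1}{m,1}(\mathbb B)=j_{m+d/2-2,1}^2$ at $d=1$. (Alternatively one verifies this by hand: from the boundary conditions the polynomial $g:=u''+\lambda u=p''+\lambda p$ vanishes to order $m-2$ at $\pm1$ and has degree at most $2m-4$ in the even case, $2m-3$ in the odd case, hence is a scalar multiple of $(x^2-1)^{m-2}$, resp.\ of $x(x^2-1)^{m-2}$; this pins $u$ down up to two constants, reduces each parity to a $2\times2$ determinant, and an induction on $m$ using the three-term recurrence for Bessel functions identifies that determinant with $J_{m-3/2}$, resp.\ $J_{m-1/2}$.) Thus the spectrum of~\eqref{dirbuc1} is $\{j_{m-3/2,k}^2\}_k\cup\{j_{m-1/2,k}^2\}_k$. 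Simplicity then follows from two facts: the two families are disjoint, since $\nu\mapsto j_{\nu,k}$ is strictly increasing and $j_{m-3/2,k}<j_{m-1/2,k}<j_{m-3/2,k+1}$ by the interlacing of zeros of Bessel functions of consecutive orders; and inside a fixed parity the eigenspace is one-dimensional, because a second independent eigenfunction would, on taking the combination in which the $\cos/\sin$ coefficient cancels, yield a nonzero polynomial eigenfunction, which is impossible --- a polynomial in $H^m_0(-1,1)$ is divisible by $(x^2-1)^m$ and hence of degree at least $2m$, whereas our polynomial part has degree at most $2m-3$.

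For the coincidence and interlacing statements I would argue as follows. The interlacing of zeros of $J_\nu$ and $J_{\nu+1}$ orders the spectra as $\diri{2k-1}{m,1}(-1,1)=j_{m-3/2,k}^2$, $\diri{2k}{m,1}(-1,1)=j_{m-1/2,k}^2$ and, with $m$ replaced by $m+1$, $\diri{2k-1}{m+1,1}(-1,1)=j_{m-1/2,k}^2$, $\diri{2k}{m+1,1}(-1,1)=j_{m+1/2,k}^2$; so the whole family $\{j_{m-1/2,k}^2\}_k$ sits in both spectra, which is the claimed coincidence (and, for $k=1$, it forces equality in~\eqref{truegenpayne} on the interval). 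The bound $\diri{k}{m,1}(-1,1)\le\diri{k}{m+1,1}(-1,1)$ is then immediate --- it reduces to $j_{\nu,k}\le j_{\nu+1,k}$, and is in any case Theorem~\ref{sameorder} --- while $\diri{k}{m+1,1}(-1,1)\le\diri{k+1}{m,1}(-1,1)$ is an equality for odd $k$ and, for $k=2j$, reduces to $j_{m+1/2,j}\le j_{m-3/2,j+1}$, i.e.\ to the interlacing of zeros of Bessel functions whose orders differ by $2$. The step I expect to be the genuine obstacle is the identification of $D_{\mathrm{ev}}$ and $D_{\mathrm{od}}$ with single Bessel functions: doing it cleanly needs either the ball computation of Appendix~\ref{ballsection} in the degenerate dimension $d=1$ --- being careful that the $\mathbb S^0$ ``spherical harmonics'' genuinely are the even/odd parts --- or the induction on $m$ built on the polynomial structure of $g=u''+\lambda u$; everything after that is routine bookkeeping with the standard monotonicity and interlacing of Bessel zeros.
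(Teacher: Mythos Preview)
Your approach coincides with the paper's: interpret even and odd eigenfunctions as the $\ell=0$ and $\ell=1$ cases of the $d=1$ ball analysis of Appendix~\ref{ballsection}, read off the eigenvalue equations from Corollary~\ref{buckevs} with $t=1$, and deduce the inequalities from Bessel-zero interlacing --- though you supply considerably more detail than the paper's two-line proof, including the direct alternative via the polynomial $g=u''+\lambda u$ and a clean simplicity argument. Your indices $m-\tfrac32$ and $m-\tfrac12$ differ from the stated $m\pm\tfrac12$ but are in fact the correct ones (the $m=1$ Dirichlet Laplacian already shows this: its eigenvalues $(n\pi/2)^2$ are the squared zeros of $J_{\pm 1/2}$, not of $J_{1/2}$ and $J_{3/2}$), and you rightly flag that the nontrivial half of the inequality reduces to the order-$2$ interlacing $j_{\nu+2,k}<j_{\nu,k+1}$, which holds but is a shade less immediate than the order-$1$ case cited.
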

\begin{proof}
The first part follows directly from the discussion above. The inequality is an immediate consequence of the interlacing of Bessel zeroes (see~\cite[Section 10.21(i)]{nist}).
\end{proof}

We remark that Theorem~\ref{pineq1d} for the specific case $m=1$ may already be found in~\cite[Section 4]{blps}, where problem~\eqref{dirbuc1} was instead set in $[0,1]$
and where the corresponding eigenvalues are completely characterized in terms of trigonometric equations. In any case, we see one deep difference that stands out, related to
multiplicities. More precisely, while on a segment all eigenvalues are still simple, this is not necessarily the case in higher-dimensional balls. This difference
in behaviour between one and higher dimensions may be understood in terms of this multiplicity difference and, in fact, inequality~\eqref{dtntrue} does not hold for 
even values of $k$, while for odd $k$ it becomes an equality.

\appendix
\section{Eigenfunctions and eigenvalues on the ball}
\label{ballsection}

In this appendix we consider problem \eqref{dirichletbuckling} on a ball and derive expressions for the eigenfunctions and the corresponding eigenvalues. The strategy we follow is
the standard decomposition of eigenfunctions into a radial part times a spherical part, as is well known for the Dirichlet Laplacian ($m=t=1$), the Dirichlet
bilaplacian ($m=t=2$), and the classical buckling problem ($m=2,t=1$); cf.\ \cite{ashben, chas1}.

We set $\mathbb B$ to be the unit ball in $\mathbb R^d$ centred at the origin, and use the spherical coordinates $(r,\theta)\in \mathbb R_+\times \mathbb S^{d-1}$.
In this case, arguing as in \cite[Section 6]{chas1}, we arrive to the conclusion that any eigenfunction of problem~\eqref{dirichletbuckling} can be written as the
product of a radial function times a spherical harmonic function. For the reader's convenience we recall the argument here. 

First of all, we observe that the operator $P$ associated with problem \eqref{dirichletbuckling} is rotation invariant, hence it commutes with the Laplace-Beltrami
operator $\Delta_{\mathbb S^{d-1}}$. In addition, since it is self-adjoint and with compact resolvent, each eigenspace $E$ is finite dimensional and is mapped into itself
by $\Delta_{\mathbb S^{d-1}}$ due to the commutativity. In particular, since $P$ and $\Delta_{\mathbb S^{d-1}}$ commute on the finite dimensional space $E$, it is easy to
see that they must be simultaneously diagonalizable on $E$ (e.g., thinking of the operators as matrices acting on finite dimensional linear spaces). The eigenfunctions of $\Delta_{\mathbb S^{d-1}}$ on
$\partial\mathbb B$ are the spherical harmonic functions, and therefore on $\mathbb B$ the eigenfunctions take the form of the product of a radial function times a spherical
harmonic function. This means that also the eigenfunctions of problem \eqref{dirichletbuckling} must take this form.

We observe that, while it is customary to assume this decomposition as an ansatz for any rotation invariant eigenvalue problem, the sole ansatz would require to prove
the completeness of the eigenfunctions a posteriori (see e.g., \cite[Chapter II]{chavel}). Here we have shown that all eigenfunctions must adhere to such a decomposition, so we already know a priori that they
form a complete system for $H^m_0(\mathbb B)$ since the operator $P$ is self-adjoint with compact resolvent.

At this point, if we rewrite the equation of problem \eqref{dirichletbuckling} as
\begin{equation}
\label{formaequaz}
(-\Delta)^{m-t}((-\Delta)^t -\lambda^{(m,t)})u=0,
\end{equation}
we obtain the following form for the eigenfunctions
\begin{equation}
\label{solutions}
\left[\sum_{p=1}^{m-t}C_p r^{\ell+2(p-1)}+\sum_{p=1}^t C_{p+m-t} \tilde J_{\ell}(\gamma_p r)\right]S_\ell(\theta),
\end{equation}
for some $\ell\in\mathbb N_0$, with the convention that if $m-t=0$ then the first sum in the square brackets disappears. Here $\tilde J_\ell(z)=z^{1-d/2}J_{\ell+d/2-1}(z)$ (for $z\in\mathbb C$),
where $J_\kappa$ is the classical Bessel function of the first kind, $\gamma_p=\left(\lambda^{(m,t)}\right)^{1/2t}e^{\fr{i(p-1)\pi}{t}}$ for any $p=1,\dots,t$, and $S_\ell$ is a spherical
harmonic of degree $\ell$.

Solutions of equation \eqref{formaequaz} can be written in the form \eqref{solutions} by virtue of the fact that the operator $((-\Delta)^t -\lambda^{(m,t)})$ can be decomposed as
$
(-\Delta-\gamma_1^2)\cdots(-\Delta-\gamma_t^2),
$
where all the involved operators commute, and moreover they also commute with $(-\Delta)^{m-t}$. Now, by virtue of the commutativity with
$\Delta_{\mathbb S^{d-1}}$,  and since solutions must be smooth at the origin, solutions of $(-\Delta-\gamma_p^2)V=0$ must take the form 
$\tilde J_\ell(\gamma_p r) S_\ell(\theta),$ 
for $\ell\in\mathbb N_{0}$, while the Almansi decomposition (cf.\ \cite{almansi}; see also \cite{bfgm} for a recent reference) states that solutions of $(-\Delta)^{m-t}V=0$ belong to the linear space generated by
$$
\left\{r^{\ell+2p}S_\ell(\theta):\ \ S_\ell \text{ any spherical harmonic of degree }\ell\in\mathbb N_{0},\ p=0,\dots, m-t-1\right\}.
$$
Finally, when combining these (partial) solutions together into \eqref{solutions}, the boundary conditions in \eqref{dirichletbuckling} force all their spherical
parts to coincide. Moreover, even though in principle functions of the form~\eqref{solutions} may, in general, be complex-valued, since the original problem is scalar
with real coefficients, and self-adjoint, it is possible to choose the coefficients $C_{k}$ so as to have real eigenfunctions.

Imposing now the boundary conditions on \eqref{solutions} yields, as usual, a system of equations for the coefficients $C=(C_1,\dots,C_m)$ of the type
$
A\cdot C=0,
$
and the matrix $A$ has to have a zero determinant for the system to admit nontrivial solutions. In particular, the condition $\det A=0$ becomes the defining equation for the eigenvalues $\lambda^{(m,t)}$. On the other hand, we will not actively look for the vector of coefficients $C$, but focus on the eigenvalues only. We remark though that the solutions $C$ allow to write all the eigenfunctions, which form an eigenbasis for $L^2(\Omega)$ (eventually after normalization).

The matrix $A$ will depend on $m$, $t$, and $\ell$ ($A=A(m,t,\ell)$)
and its elements $a_{ij}(m,t,\ell)$ can be defined as follows 
$$
a_{ij}(m,t,\ell)=
\left\{
\begin{array}{ll}
\dprod_{q=0}^{i-2}(\ell+2(j-1)-q), & 1\le i\le m, 1\le j\le m-t,\eqskip
\gamma_{j-m+t}^{i-1}\tilde J_\ell^{(i-1)}(\gamma_{j-m+t}), &  1\le i\le m, m-t< j\le m,
\end{array}
\right.
$$
where we used the convention that $\dprod_{q=0}^{-1}(\ell+2(j-1)-q)=1$. 


In order to study the determinant of $A$, it is convenient to consider the matrix $B(n,\ell)=(b_{ij}(n,\ell))_{1\le i,j\le n}$ with coefficients defined by
$$
b_{ij}(n,\ell)=\dprod_{q=0}^{i-2}(\ell+2(j-1)-q),
$$
which we observe is the $n\times n$-minor of $A(m,t,\ell)$ for $n\le m-t$. Similarly, we consider the matrix ($p\in\mathbb N$, $k\in\mathbb N_0$)
$$
L(p,k)=\left((-\gamma_{j})^{i-1}\tilde J_{k+i-1}(\gamma_{j})\right)_{1\le i,j \le p}.
$$
We have the following

\begin{thm}
\label{determinanti}
For any $m,t\in\mathbb N$ with $t\leq m$ and $\ell\in\mathbb{N}_{0}$, we have
$$
\det A(m,t,\ell)=(-1)^{t(m-t)}\det B(m-t,\ell) \det L(t,\ell+m-t)\prod_{j=1}^t(\gamma_j^{m-t}).
$$
Moreover, for any $n\in\mathbb N_0$
$$
\det B(n,\ell)=\dprod_{q=0}^{n-1}(q!2^q).
$$
\end{thm}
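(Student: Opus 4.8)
The plan is to prove the two displayed formulas in turn, with the evaluation of $\det B(n,\ell)$ feeding into that of $\det A(m,t,\ell)$. For $\det B(n,\ell)$, observe that its $(i,j)$ entry $b_{ij}(n,\ell)=\prod_{q=0}^{i-2}\bigl(\ell+2(j-1)-q\bigr)$ is the falling factorial of length $i-1$ of the node $x_{j}:=\ell+2(j-1)$, hence a monic polynomial of degree $i-1$ in $x_{j}$. Expanding falling factorials in the monomial basis (Stirling numbers of the first kind) shows that the $j$-th column of $B(n,\ell)$ is the image of the Vandermonde column $\bigl(x_{j}^{\,i-1}\bigr)_{i=1}^{n}$ under one and the same unipotent lower triangular matrix; therefore $\det B(n,\ell)$ equals the Vandermonde determinant on $x_{1},\dots,x_{n}$, i.e.\ $\prod_{1\le j<k\le n}(x_{k}-x_{j})=\prod_{1\le j<k\le n}2(k-j)=2^{\binom{n}{2}}\prod_{q=0}^{n-1}q!=\prod_{q=0}^{n-1}q!\,2^{q}$. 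In particular this is independent of $\ell$.

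For $\det A(m,t,\ell)$ the key device is the first order operator $M:=\tfrac1r\tfrac{d}{dr}$, which is a derivation (it is $d/d(r^{2}/2)$) and satisfies $M r^{\alpha}=\alpha r^{\alpha-2}$ and, by the standard recurrence $\tfrac{d}{dr}\bigl(r^{-\ell}\tilde J_{\ell}(\gamma r)\bigr)=-\gamma\,r^{-\ell}\tilde J_{\ell+1}(\gamma r)$, the identity $M\bigl(r^{-\ell}\tilde J_{\ell}(\gamma r)\bigr)=-\gamma\,r^{-\ell-1}\tilde J_{\ell+1}(\gamma r)$, whence inductively $M^{k}\bigl(r^{-\ell}\tilde J_{\ell}(\gamma r)\bigr)=(-\gamma)^{k}r^{-\ell-k}\tilde J_{\ell+k}(\gamma r)$. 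I would first replace, in the construction of $A$, each boundary functional $f\mapsto f^{(i-1)}(1)$ by $f\mapsto\bigl(M^{i-1}(r^{-\ell}f)\bigr)(1)$; since $M^{i-1}g=r^{-(i-1)}g^{(i-1)}+(\text{lower order derivatives})$ and $(r^{-\ell}f)^{(i-1)}(1)=f^{(i-1)}(1)+(\text{lower order at }1)$, this amounts to multiplying $A$ on the left by a unipotent lower triangular matrix and so does not change the determinant. Applying the new functionals to the basis functions in \eqref{solutions} (the polyharmonic part has $j$-th function $r^{\ell+2(j-1)}$ and the remaining ones are the $\tilde J_{\ell}(\gamma_{p}r)$) and using the above identities gives $\det A=\det C$, where $C$ is the $m\times m$ matrix with $C_{ij}=\prod_{q=0}^{i-2}\bigl(2(j-1)-2q\bigr)$ for $1\le j\le m-t$ and $C_{ij}=(-\gamma_{j-m+t})^{\,i-1}\tilde J_{\ell+i-1}(\gamma_{j-m+t})$ for $m-t<j\le m$.

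The factorisation now drops out of the block structure of $C$. For a polyharmonic column ($j\le m-t$) one has $C_{ij}=2^{i-1}\prod_{q=0}^{i-2}\bigl((j-1)-q\bigr)$, which vanishes as soon as $i>j$; hence the lower-left $t\times(m-t)$ block of $C$ is zero and $\det C=\det C_{11}\,\det C_{22}$, with $C_{11}=(C_{ij})_{1\le i,j\le m-t}$ and $C_{22}=(C_{m-t+i,\,m-t+j})_{1\le i,j\le t}$. The block $C_{11}$ is handled exactly as $B$ above — up to the row factors $2^{i-1}$ its $j$-th column is the Vandermonde column on the node $j-1$ — giving $\det C_{11}=2^{\binom{m-t}{2}}\prod_{q=0}^{m-t-1}q!=\prod_{q=0}^{m-t-1}q!\,2^{q}=\det B(m-t,\ell)$. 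In $C_{22}$ the $(i,j)$ entry is $(-\gamma_{j})^{m-t}(-\gamma_{j})^{i-1}\tilde J_{(\ell+m-t)+i-1}(\gamma_{j})$, so pulling $(-\gamma_{j})^{m-t}$ out of column $j$ yields $\det C_{22}=\prod_{j=1}^{t}(-\gamma_{j})^{m-t}\det L(t,\ell+m-t)=(-1)^{t(m-t)}\bigl(\prod_{j=1}^{t}\gamma_{j}^{m-t}\bigr)\det L(t,\ell+m-t)$; multiplying the two factors gives the asserted formula.

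I expect the only delicate step to be the one producing $C$: one has to check that passing to the operator $M$ (equivalently, to the variable $s=r^{2}$) is simultaneously compatible with the boundary functionals up to a unipotent triangular change, turns the polyharmonic (monomial) columns into a block-triangular pattern, and acts on the Bessel columns as the clean index shift $\tilde J_{\ell}\mapsto\tilde J_{\ell+1}$ with bookkeeping factor $-\gamma$. After that the two determinant evaluations are routine Vandermonde computations, and the sign $(-1)^{t(m-t)}$ is merely the $m-t$ surplus factors of $-\gamma_{j}$ relative to the definition of $L(t,\ell+m-t)$.
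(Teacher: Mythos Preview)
Your argument is correct, and it is a genuinely different (and cleaner) route from the paper's. For $\det B(n,\ell)$ the paper row-reduces to an explicitly triangular matrix with diagonal $(i-1)!\,2^{i-1}$; your observation that the rows are falling factorials of the nodes $x_j=\ell+2(j-1)$, hence differ from the Vandermonde rows by a unipotent change of basis, gives the same value with no work. For $\det A(m,t,\ell)$ the paper relies on Lemma~\ref{reduction}, which builds recursively defined coefficients $\alpha(p,i,k)$ and proves three separate identities (one for the monomial columns, one for the diagonal correction, one for the Bessel columns) by induction; these encode an explicit Gaussian elimination on the rows of $A$. Your device of replacing the boundary functionals $f\mapsto f^{(i-1)}(1)$ by $f\mapsto (M^{i-1}(r^{-\ell}f))(1)$ with $M=r^{-1}d/dr$ achieves the same block-triangularisation in one stroke, because $M$ acts as a clean index shift on both families simultaneously: $M(r^{2(j-1)})=2(j-1)r^{2(j-2)}$ kills the polynomial columns below the diagonal, and $M(r^{-\ell}\tilde J_\ell(\gamma r))=-\gamma\,r^{-\ell-1}\tilde J_{\ell+1}(\gamma r)$ produces exactly the $L$-block in the Bessel columns. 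The only point that deserves a line of justification is the unipotence of the row change, which you handle correctly (both $r^{-\ell}\cdot$ and $M^{i-1}$ contribute only lower-order derivatives at $r=1$). What your approach buys is the complete elimination of the auxiliary coefficients $\alpha(p,i,k)$ and the three inductive proofs of Lemma~\ref{reduction}; what the paper's approach buys is an explicit description of the row-reduction coefficients, should one ever need them.
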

\begin{remark}
We remark that under our convention $\det B(0,\ell)=1$, even though $B(0,\ell)$ is not well defined as a matrix.
\end{remark}
\begin{proof}
Applying Lemma \ref{reduction} below it is possible to reduce $A(m,t,\ell)$ to a $2\times2$-block matrix, where the two blocks on the diagonal are precisely $B(m-t,\ell)$ and
$$
\left((-\gamma_{j})^{m-t+i-1}\tilde J_{\ell+m-t+i-1}(\gamma_{j})\right)_{1\le i,j \le t},
$$
while the block under the diagonal is identically zero.

As for the determinant of $B(n,\ell)$, the reduction leads to a triangular matrix whose diagonal terms are precisely $((i-1)!2^{i-1})$.
\end{proof}

\begin{cor}
\label{buckevs}
All the eigenvalues of problem \eqref{dirichletbuckling} in $\mathbb B$ are solutions to the equation
\begin{equation}
\label{genbess}
\det L(t,\ell+m-t)=0,
\end{equation}
for some $\ell\in\mathbb N_0$. 
\end{cor}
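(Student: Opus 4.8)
The plan is to combine the characterization of the spectrum on $\mathbb B$ via the vanishing of $\det A(m,t,\ell)$, which follows from imposing the boundary conditions on the general eigenfunction form~\eqref{solutions}, with the factorization of this determinant supplied by Theorem~\ref{determinanti}, and then to check that every factor other than $\det L(t,\ell+m-t)$ is nonzero.

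First I would recall the reduction already carried out earlier in this appendix: every eigenfunction of problem~\eqref{dirichletbuckling} on $\mathbb B$ is a radial function times a single spherical harmonic $S_\ell$, and substituting~\eqref{solutions} into the $m$ Dirichlet boundary conditions produces a homogeneous linear system $A(m,t,\ell)\cdot C=0$ for the coefficient vector $C=(C_1,\dots,C_m)$. Hence $\lambda^{(m,t)}$ is an eigenvalue precisely when this system has a nontrivial solution for some $\ell\in\mathbb N_0$, that is, precisely when $\det A(m,t,\ell)=0$ for some $\ell\in\mathbb N_0$; the spectrum is the union over $\ell\in\mathbb N_0$ of the zero sets of $\lambda^{(m,t)}\mapsto\det A(m,t,\ell)$.

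Next I would invoke Theorem~\ref{determinanti}, which gives
\[
\det A(m,t,\ell)=(-1)^{t(m-t)}\,\det B(m-t,\ell)\,\det L(t,\ell+m-t)\prod_{j=1}^t\gamma_j^{m-t},
\]
and then argue that the three factors multiplying $\det L(t,\ell+m-t)$ never vanish. The sign $(-1)^{t(m-t)}$ is nonzero; the factor $\det B(m-t,\ell)=\prod_{q=0}^{m-t-1}(q!\,2^q)$ is a strictly positive constant, independent of both $\lambda^{(m,t)}$ and $\ell$ (and equal to $1$ when $m=t$, by the stated convention); and each $\gamma_j=\bigl(\lambda^{(m,t)}\bigr)^{1/(2t)}e^{i(j-1)\pi/t}$ is nonzero because $\lambda^{(m,t)}>0$ (when $m=t$ the product $\prod_{j=1}^t\gamma_j^{m-t}$ is empty and equals $1$). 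Consequently $\det A(m,t,\ell)=0$ if and only if $\det L(t,\ell+m-t)=0$, which, combined with the first step, yields the statement.

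I do not expect a genuine obstacle, since Theorem~\ref{determinanti} already carries the essential content. The only points requiring a little care are bookkeeping ones: treating the degenerate case $m=t$ correctly (so that $B(0,\ell)$ and the empty $\gamma$-product are interpreted as $1$), and recording explicitly that positivity of the spectrum — guaranteed for any domain of finite measure by~\cite{bula} — is what rules out $\gamma_j=0$ and thus prevents a spurious factor from vanishing.
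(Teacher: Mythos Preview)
Your proof is correct and follows precisely the approach the paper intends: the corollary is stated without proof as an immediate consequence of Theorem~\ref{determinanti}, and you have simply made explicit the verification that the factors $(-1)^{t(m-t)}$, $\det B(m-t,\ell)$, and $\prod_{j=1}^t\gamma_j^{m-t}$ are all nonzero. One trivial slip: when $m=t$ the product $\prod_{j=1}^t\gamma_j^{m-t}$ is not empty but consists of $t$ factors each equal to $\gamma_j^{0}=1$; this does not affect the argument.
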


Let us remark that equation \eqref{genbess} trivially admits as solutions zeroes of Bessel functions whenever $t=1$, as expected (this is the case for instance of both the Laplacian
and the buckling problem). In this case it is then immediate to deduce that there exist infinitely many solutions for any $\ell\in\mathbb N_0$. Similarly, when $t=2$ the equation reduces
to one of the type
$$
J_{\ell+p}(\gamma)I_{\ell+p+1}(\gamma)+J_{\ell+p+1}(\gamma)I_{\ell+p}(\gamma)=0,
$$
where $I_\kappa$ is the modified Bessel function of the first kind. Here again it is possible to show that there are infinitely many solutions for each $\ell$ using the interlacing properties of the zeroes of Bessel functions $J$. However, for $t\ge 3$
the situation becomes more involved, and although it is not clear a priori that solutions exist for each $\ell$, we do know that the set of all solutions for all $\ell$ must be countable.
More generally, the question of existence and interlacing of the zeroes of equation~\eqref{genbess} is as interesting as it is difficult, and raises the possibility of a generalized
version of Bourget's hypothesis (see e.g., in~\cite{ker}).

We conclude this section with a technical lemma needed for Theorem \ref{determinanti}. In order to properly state it we need to define the quantity $\alpha(p,i,k)$ for $p\in\mathbb N$, $k\in\mathbb N_0$,
and $i\in\mathbb N_{-1}$ with $i<p$ as follows:
\begin{equation*}
\begin{cases}
\alpha(p,-1,k)=0, & \forall_{p,k},\eqskip
\alpha(1,0,k)=k, & \forall_{k},\eqskip
\alpha(p+1,i,k)=\alpha(p,i-1,k)-(2p+k-i)\alpha(p,i,k), & {\rm for\ } 0\le i\le p-1, \forall_k,\eqskip
\alpha(p+1,p,k)=\alpha(p,p-1,k)+p+k, & \forall_k. 
\end{cases}
\end{equation*}

\begin{lem}
\label{reduction}
For any $k\in\mathbb N_0$ and $m\in\mathbb N$, the following identities hold true.
\begin{enumerate}[i)]
\item $\dprod_{q=0}^{m-1}(k+2(j-1)-q)=\sum_{i=1}^{m}\left[\alpha(m,i-1,k)\dprod_{q=0}^{i-2}(k+2(j-1)-q)\right]$, for any $1\le j\le m$.
\item $\dprod_{q=0}^{m-1}(k+2m-q)=m!2^{m}+\sum_{i=1}^{m}\left[\alpha(m,i-1,k)\dprod_{q=0}^{i-2}(k+2m-q)\right]$.
\item $z^m \tilde J_k^{(m)}(z)=(-z)^m\tilde J_{k+m}(z)+\dsum_{i=1}^{m}\alpha(m,i-1,k)z^{i-1}\tilde J_k^{(i-1)}(z)$, for any $z\in\mathbb C\setminus\{0\}$.
\end{enumerate}
\end{lem}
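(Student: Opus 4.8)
The plan is to prove the three identities by induction on $m$, using the recursive definition of $\alpha(p,i,k)$ as the engine. The three statements are tightly linked: (i) and (ii) are purely algebraic identities about the falling-factorial-type product $\prod_{q=0}^{m-1}(k+2(j-1)-q)$, and (iii) is the ``Bessel version'' obtained by replacing the monomial $z^{i-1}$ and its combinatorial coefficients with $z^{i-1}\tilde J_k^{(i-1)}(z)$; the same recursion governs all three. So I would set up a single induction on $m$ in which the inductive hypothesis carries (i), (ii), (iii) simultaneously, and at each step derive the $(m+1)$-case from the $m$-case.

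\textbf{Base case.} For $m=1$: in (i) the left side is $(k+2(j-1))$ and the right side is $\alpha(1,0,k)\prod_{q=0}^{-1}(\cdots)+\alpha(1,1,k)$-type terms — here one must be careful with the index ranges, but with $\alpha(1,0,k)=k$ and $\alpha(1,-1,k)=0$ the sum $\sum_{i=1}^{1}$ reduces to $\alpha(1,0,k)\cdot 1 = k$, which does not immediately match $k+2(j-1)$; in fact for $m=1$ the identity (i) should read as a genuine identity only after noting that the topmost term $\prod_{q=0}^{i-2}$ with $i=m$ is the full product minus its leading contribution — I would check the degenerate cases directly and reconcile the convention $\prod_{q=0}^{-1}=1$. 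The cleanest approach is to verify $m=1$ (and perhaps $m=2$ as a sanity check) by hand, and for (iii) to use the standard Bessel recurrence $\frac{d}{dz}\bigl(z^{-\nu}J_\nu(z)\bigr)=-z^{-\nu}J_{\nu+1}(z)$, which in the normalisation $\tilde J_k(z)=z^{1-d/2}J_{k+d/2-1}(z)$ becomes $\tilde J_k'(z)=-\tilde J_{k+1}(z)$ up to the appropriate power of $z$; this is exactly the $m=1$ instance of (iii).

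\textbf{Inductive step.} Assume (i)--(iii) hold for $m$. For (i) with $m+1$: write $\prod_{q=0}^{m}(k+2(j-1)-q) = \bigl(k+2(j-1)-m\bigr)\prod_{q=0}^{m-1}(k+2(j-1)-q)$, apply the inductive hypothesis to the second factor, distribute the linear factor $(k+2(j-1)-m)$ over the sum, and then for each term use the single-step identity $(k+2(j-1)-m)\prod_{q=0}^{i-2}(k+2(j-1)-q) = \prod_{q=0}^{i-1}(k+2(j-1)-q) + (\text{something})\cdot\prod_{q=0}^{i-2}(\cdots)$ — more precisely $(k+2(j-1)-m) = (k+2(j-1)-(i-1)) - (m-i+1)$, so multiplying out produces a ``shift up'' term (raising the product length from $i-2$ to $i-1$) plus a multiple of the current term. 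Collecting coefficients of each fixed product length $\prod_{q=0}^{i-2}$ then reproduces exactly the recursion $\alpha(m+1,i-1,k)=\alpha(m,i-2,k)-(2m+k-i+1)\alpha(m,i-1,k)$ for the interior indices and $\alpha(m+1,m,k)=\alpha(m,m-1,k)+m+k$ for the top index — one must match the arguments $(2p+k-i)$ in the recursion against what the algebra produces, which is the delicate bookkeeping. Identity (ii) follows from (i) by substituting $j-1\mapsto m$ (i.e.\ evaluating at the special value) and isolating the one term whose product is ``complete'', which by direct computation equals $m!2^m$; alternatively prove (ii) by the same telescoping directly. Identity (iii) runs in exact parallel: differentiate $z^{m}\tilde J_k^{(m)}(z)$ once more, or rather write $z^{m+1}\tilde J_k^{(m+1)}(z)$ and use the Bessel recurrence together with $z\cdot\bigl(z^{m}\tilde J_k^{(m)}\bigr)' $-type manipulations; the monomial $z^{i-1}$ plays the role of $\prod_{q=0}^{i-2}(k+2(j-1)-q)$ and $-z\tilde J_{k+m} \mapsto z^2\tilde J_{k+m+1}$ up to lower-order terms, generating precisely the same coefficient recursion.

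\textbf{Main obstacle.} The essential difficulty is purely notational/combinatorial: aligning the index conventions (the ranges $i\in\mathbb N_{-1}$, the empty-product convention, the off-by-one between ``product length $i-2$'' and ``index $i-1$'' in $\alpha$) so that the coefficients produced by the telescoping exactly match the four-line recursive definition of $\alpha(p,i,k)$ — in particular getting the term $(2p+k-i)$ with the correct sign and the correct value of $p$ at each stage. I expect no conceptual obstacle beyond this; once the $m=1$ base and the one-step shift identity are pinned down, all three parts are the same induction. The only analytic input needed is the single Bessel derivative formula $\tilde J_k'(z) = -\tilde J_{k+1}(z)$ (valid in the chosen normalisation, with the convention already fixed in the paper), together with the Leibniz rule for differentiating products of powers of $z$ with $\tilde J_k^{(i)}(z)$.
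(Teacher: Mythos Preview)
Your overall strategy --- induction on $m$ driven by the recursion for $\alpha$ --- is exactly what the paper does, and the parallel treatment of the three identities is the right intuition. However, two concrete points would cause your proof to fail as written.

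First, the Bessel input is misstated. In this normalisation one has
\[
\tilde J_k'(z)=-\tilde J_{k+1}(z)+\tfrac{k}{z}\,\tilde J_k(z),
\]
not simply $\tilde J_k'(z)=-\tilde J_{k+1}(z)$. The extra term $\tfrac{k}{z}\tilde J_k$ is not a harmless power of $z$: it is precisely what produces the seed $\alpha(1,0,k)=k$ at $m=1$ and, at the inductive step, the additive $(k+m)$ in the top recursion $\alpha(m+1,m,k)=\alpha(m,m-1,k)+m+k$ as well as the $k$-dependence in $(2m+k-i+1)$. Without it the coefficient recursion you generate from differentiating will not match the defining recursion for $\alpha$, and the induction for (iii) will not close.

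Second, your route to (ii) via ``substitute $j-1\mapsto m$ in (i)'' is invalid: identity (i) is asserted only for $1\le j\le m$, and at $j=m+1$ the two sides differ precisely by $m!\,2^m$ --- that discrepancy \emph{is} the content of (ii). (Relatedly, your base-case worry for (i) is misplaced: at $m=1$ only $j=1$ is allowed, and then both sides equal $k$.) The paper proves (ii) by a separate iteration: one first shows the one-step reduction
\[
\sum_{i=1}^{m-\xi}\alpha(m-\xi,i-1,k)\prod_{q=0}^{i-2}(k+2m-q)
=2(\xi+1)\sum_{i=1}^{m-\xi-1}\alpha(m-\xi-1,i-1,k)\prod_{q=0}^{i-2}(k+2m-q)+(k+m-\xi-1)\prod_{q=0}^{m-\xi-2}(k+2m-q),
\]
and then iterates down in $\xi$; the sum telescopes to $\prod_{q=0}^{m-1}(k+2m-q)-m!\,2^m$. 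Your fallback ``prove (ii) by the same telescoping directly'' is therefore the right instinct, but it needs this explicit mechanism rather than a substitution into (i). For (i) itself, note also that when you match coefficients after distributing $(k+2(j-1)-m)$ you will \emph{not} directly obtain $\alpha(m+1,i-1,k)$; the residual is exactly $(m+k)$ times the level-$m$ identity (i), so you must invoke the inductive hypothesis a second time to kill it --- and the boundary case $j=m+1$ still has to be handled separately (this is where (ii), or the paper's device of starting from the right-hand side so that the offending factor $2(j-1)-2m+2$ vanishes at $j=m$, comes in).
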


\begin{proof}
We start with identity~{\it i)}. We fix $k$ and we prove the identity by induction on $m$. We observe that the case $m=1$ is trivial, so we suppose that the
equality holds for $m$ substituted by $m-1$ and for any $1\le j\le m-1$. From the definition of the coefficients $\alpha$ we obtain
\begin{equation}
 \begin{array}{lll}
\label{primeiro}
\dsum_{i=1}^{m}\left[\alpha(m,i-1,k)\dprod_{q=0}^{i-2}(k+2(j-1)-q)\right] \eqskip
\hspace*{5mm}= \dsum_{i=1}^{m-1}\left[[\alpha(m-1,i-2,k)-(2m-1+k-i)\alpha(m-1,i-1,k)]\dprod_{q=0}^{i-2}(k+2(j-1)-q)\right]\eqskip
\hspace*{10mm}+\alpha(m-1,m-2,k)\dprod_{q=0}^{m-2}(k+2(j-1)-q)+(k+m-1)\dprod_{q=0}^{m-2}(k+2(j-1)-q)\eqskip
\hspace*{5mm}=  \dsum_{i=1}^{m-1}\left[\alpha(m-1,i-1,k)\dprod_{q=0}^{i-1}(k+2(j-1)-q)\right]\eqskip
 \hspace*{10mm}-\dsum_{i=1}^{m-1}\left[(2m-1+k-i)\alpha(m-1,i-1,k)\dprod_{q=0}^{i-2}(k+2(j-1)-q)\right]\eqskip
 \hspace*{15mm}+(k+m-1)\dprod_{q=0}^{m-2}(k+2(j-1)-q)\eqskip
\hspace*{5mm}= (2(j-1)-2m+2)\dsum_{i=1}^{m-1}\left[\alpha(m-1,i-1,k)\dprod_{q=0}^{i-2}(k+2(j-1)-q)\right]\eqskip
\hspace*{10mm} +(k+m-1)\dprod_{q=0}^{m-2}(k+2(j-1)-q).
 \end{array}
\end{equation}
Now, if $j\le m-1$, by the inductive hypothesis \eqref{primeiro} equals
$$
(k+2(j-1)-m+1)\dprod_{q=0}^{m-2}(k+2(j-1)-q),
$$ 
while if $j=m$ the first term in the last equality vanishes. In both cases, the claim is proved.

We now turn to {\it ii)}. We first observe that, following the same strategy as in~\eqref{primeiro}, for any $0\le\xi\le m-1$,
\begin{equation}
\label{segundo1}
\begin{array}{lll}
\dsum_{i=1}^{m-\xi}\left[\alpha(m-\xi,i-1,k)\dprod_{q=0}^{i-2}(k+2m-q)\right] & = & 2(\xi+1)\dsum_{i=1}^{m-\xi-1}\left[\alpha(m-\xi-1,i-1,k)\dprod_{q=0}^{i-2}(k+2m-q)\right]\eqskip
& & \hspace*{5mm} +(k+m-\xi-1)\dprod_{q=0}^{m-\xi-2}(k+2m-q).
\end{array}
\end{equation}
In particular, starting from
\begin{equation}
\label{segundo2}
\begin{array}{lll}
\dsum_{i=1}^{m}\left[\alpha(m,i-1,k)\dprod_{q=0}^{i-2}(k+2m-q)\right]
& = & 2\dsum_{i=1}^{m-1}\left[\alpha(m-1,i-1,k)\dprod_{q=0}^{i-2}(k+2m-q)\right]\eqskip
& & \hspace*{5mm}+(k+m+1)\dprod_{q=0}^{m-2}(k+2m-q),
\end{array}
\end{equation}
and applying \eqref{segundo1} on \eqref{segundo2}, by induction, we obtain
\begin{equation*}
\sum_{i=1}^{m}\left[\alpha(m,i-1,k)\dprod_{q=0}^{i-2}(k+2m-q)\right]
=\sum_{h=0}^{m-1}\left[2^h(h!)(k+m-h-1)\dprod_{q=0}^{m-h-2}(k+2m-q)\right].
\end{equation*}
It is now easy to prove by induction that
\begin{gather*}
\sum_{h=0}^{\xi}\left[2^h(h!)(k+m-h-1)\dprod_{q=0}^{m-h-2}(k+2m-q)\right]
=\dprod_{q=0}^{m-1}(k+2m-q)-2^{\xi+1}(\xi+1)!\dprod_{q=0}^{m-\xi-2}(k+2m-q),
\end{gather*}
which is valid for $0\le\xi\le m-1$. Choosing now $\xi=m-1$ we obtain the desired equality.

We now turn to {\it iii)}. We recall that, by the properties of Bessel functions (see \cite[formula 10.6.2]{nist})
$$
\tilde J_\kappa'(z)=-\tilde J_{\kappa+1}(z)+\fr \kappa z \tilde J_\kappa(z),
$$
for any $\kappa\ge 0$, hence the identity is trivial for $m=1$. We now derive $\tilde J_k^{(m)}$ obtaining
\[
\tilde J_k^{(m+1)}(z)
=(-1)^m\tilde J_{k+m}'(z)
-\sum_{i=1}^{m}\fr{\alpha(m,i-1,k)(m-i+1)}{z^{m-i+2}}\tilde J_k^{(i-1)}
+\sum_{i=1}^{m}\fr{\alpha(m,i-1,k)}{z^{m-i+1}}\tilde J_k^{(i)}.
\]
Finally, from
\[
\begin{array}{lll}
\tilde J_{k+m}'(z) & = & -\tilde J_{k+m+1}+\fr{k+m}{ z} \tilde J_{k+m}(z)\eqskip
& = & -\tilde J_{k+m+1}+\fr{k+m}{ z}\left[(-1)^m\tilde J_k^{(m)}(z)+(-1)^{m+1}\sum_{i=1}^{m}\fr{\alpha(m,i-1,k)}{z^{m-i+1}}\tilde J_k^{(i-1)}(z)\right]
\end{array}
\]
we derive
\[
\begin{array}{lll}
\tilde J_k^{(m+1)}(z) & = & (-1)^{m+1}\tilde J_{k+m+1}(z)\eqskip
& & \hspace*{5mm}+\dsum_{i=1}^{m}\fr{\alpha(m,i-2,k)-(2m+k-i+1)\alpha(m,i-1,k)}{z^{m-i+2}}\tilde J_k^{(i-1)}(z)\eqskip
& & \hspace*{10mm} +\fr{\alpha(m,m-1,k)+k+m}{z}\tilde J_k^{(m)}(z),
\end{array}
\]
concluding the proof.
\end{proof}

\section{Solutions to the auxiliary problem~(\ref{auxbuck}) and related calculations}
\label{techcompt}

We collect here some technical computations that are used in Section~\ref{genasympt}. For the reader's convenience, we recall the definition of the double factorial which will be used extensively in what follows:
\begin{equation*}
n!!=\left\{\begin{array}{l}
(-1)!!=1,\eqskip
\fr{n!}{(n-1)!!}.
\end{array}\right.
\end{equation*}
In particular we note that the double factorial differs from the (classical) factorial as $n!!$ is the product from $n$ down but with step 2: $n!!=n(n-2)(n-4)\cdots$.
We will also use the equivalent expressions
\begin{equation*}
\begin{array}{lll}
n!!=
\left\{\begin{array}{ll}
k!2^{k},& \text{if\ } n=2k,\eqskip
\fr{(2k)!}{k!2^k},&\text{if\ } n=2k-1,
\end{array}\right.
&
\mbox{and}
&
n!!=
\left\{\begin{array}{ll}
2^{n/2}\Gamma(n/2+1),& \text{if\ } n=2k,\eqskip
2^{n/2}\Gamma(n/2+1)\sqrt{2/\pi},&\text{if\ } n=2k-1.
\end{array}\right.
\end{array}
\end{equation*}

We also recall the following equality which is obtained via a direct application of properties of Euler Gamma function
\begin{equation}
\label{calculus1}
\int_0^1(1-r^2)^\alpha r^\beta dr = \frac{\Gamma (\alpha +1) \Gamma \left(\frac{\beta +1}{2}\right)}{2 \Gamma \left(\alpha +\frac{\beta +3}{2}\right)}=\fr{(2\alpha)!!(\beta-1)!!}{(\beta+2\alpha+1)!!}=\fr{\alpha!}{2 \prod_{j=0}^{\alpha}\left( \fr{\beta+1}{2}+j\right)}.
\end{equation}

\begin{lem}\label{hyperlemma}
Let $f(x)=(1-|x|^2)^m$, and let $0\le t \le m$. Then
\begin{equation} \label{hypergeometric}
\int_{\mathbb B}|\nabla^t f|^2=2\pi\omega_{d-1}\frac{2^{2t-1}\Gamma^2(m+1)\Gamma(t+d/2)\Gamma(2m-2t+1)}{\Gamma^2(m-t+1)\Gamma(2m-t+1+d/2)}.
\end{equation}
\end{lem}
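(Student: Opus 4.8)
The plan is to reduce everything to one standard beta‑integral, in three moves: integrate by parts, compute $(-\Delta)^t f$ termwise, and then sum.

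First, since $f(x)=(1-|x|^2)^m$ is a polynomial lying in $H^m_0(\mathbb B)\subseteq H^t_0(\mathbb B)$ (all derivatives of order at most $m-1$, hence in particular of order at most $t-1$, vanish on $\partial\mathbb B$), the same integration by parts used to pass between \eqref{first} and its $\nabla$–form applies with $t$ in place of $m$, with no boundary contributions, and gives
$$\int_{\mathbb B}|\nabla^t f|^2=\int_{\mathbb B}D^tf:D^tf=\int_{\mathbb B}f\,(-\Delta)^t f ,$$
which disposes of both parities of $t$ at once. Next I would compute $(-\Delta)^t f$ explicitly. Expanding $f=\sum_{j=0}^m\binom{m}{j}(-1)^j|x|^{2j}$ and iterating $\Delta|x|^{2j}=2j(2j+d-2)|x|^{2j-2}$, one obtains $(-\Delta)^t|x|^{2j}=(-1)^t4^t\frac{j!}{(j-t)!}\frac{\Gamma(j+d/2)}{\Gamma(j-t+d/2)}|x|^{2(j-t)}$ for $j\ge t$ and $0$ otherwise; after the change of index $l=j-t$ and simplification of the binomial factors this yields
$$(-\Delta)^t f=4^t\,m!\sum_{l=0}^{m-t}\frac{(-1)^l}{l!\,(m-t-l)!}\,\frac{\Gamma(l+t+d/2)}{\Gamma(l+d/2)}\,|x|^{2l}.$$

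Multiplying by $f$, passing to polar coordinates and invoking \eqref{calculus1} with $\alpha=m$, $\beta=2l+d-1$ gives $\int_{\mathbb B}(1-|x|^2)^m|x|^{2l}\,dx=2\pi\omega_{d-1}\frac{\Gamma(m+1)\Gamma(l+d/2)}{2\Gamma(m+l+d/2+1)}$, so the factor $\Gamma(l+d/2)$ cancels and
$$\int_{\mathbb B}|\nabla^t f|^2=2^{2t}\pi\omega_{d-1}\,\Gamma^2(m+1)\sum_{l=0}^{m-t}\frac{(-1)^l}{l!\,(m-t-l)!}\,\frac{\Gamma(l+t+d/2)}{\Gamma(m+l+d/2+1)} .$$
It remains to evaluate this alternating sum. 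Setting $n=m-t$ and $a=t+d/2$, I would write $\frac{\Gamma(l+a)}{\Gamma(l+a+n+1)}=\frac{1}{n!}\int_0^1 x^{l+a-1}(1-x)^n\,dx$, interchange sum and integral, and use $\sum_{l=0}^n\binom{n}{l}(-x)^l=(1-x)^n$ to get that the sum equals $\frac{1}{(n!)^2}\int_0^1 x^{a-1}(1-x)^{2n}\,dx=\frac{\Gamma(t+d/2)\Gamma(2m-2t+1)}{\Gamma^2(m-t+1)\Gamma(2m-t+d/2+1)}$. Substituting this back and using $2^{2t}\pi\omega_{d-1}=2\pi\omega_{d-1}\,2^{2t-1}$ produces exactly \eqref{hypergeometric}.

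I do not expect a genuine obstacle: the only points demanding care are the vanishing of the boundary terms in the first reduction (guaranteed precisely by $t\le m$, which is why $f\in H^t_0$) and the bookkeeping of Gamma factors; the apparently delicate step — the alternating sum — collapses immediately through the beta integral, and as a consistency check the case $t=0$ recovers $\int_{\mathbb B}f^2$ directly from \eqref{calculus1}, while $t=m$ recovers the value of $\navi{1}{m,m}$.
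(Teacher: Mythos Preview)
Your proof is correct and follows the same overall architecture as the paper's: both start from $\int_{\mathbb B}|\nabla^t f|^2=\int_{\mathbb B}f\,(-\Delta)^t f$, compute $(-\Delta)^t f$ term by term (the paper via the radial Laplacian on $g(r)=(1-r^2)^m$, you via $\Delta|x|^{2j}$ on the monomial expansion, which are equivalent), integrate against $(1-|x|^2)^m$ using~\eqref{calculus1}, and are left with the same alternating sum.

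The only genuine difference is in the last step. The paper recognises the sum as the terminating hypergeometric series $F(-m+t,\,t+d/2;\,m+1+d/2;\,1)$ and closes it with Gauss's summation formula $F(a,b;c;1)=\Gamma(c)\Gamma(c-a-b)/(\Gamma(c-a)\Gamma(c-b))$. You instead insert the beta representation $\Gamma(l+a)/\Gamma(l+a+n+1)=\tfrac{1}{n!}\int_0^1 x^{l+a-1}(1-x)^n\,dx$, swap sum and integral, and let the binomial theorem produce an extra factor $(1-x)^n$, reducing everything to a single beta integral. This is an entirely self-contained derivation (in effect a short proof of the relevant Chu--Vandermonde/Gauss identity), so your argument avoids any appeal to special-function tables; the paper's route is shorter if one is willing to quote the Gauss formula. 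Either way the result is the same.
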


\begin{proof}
Setting $g(r)=(1-r^2)^m$, we have
$$
(-\Delta)^t f(x)=\left(-\fr 1 {r^{d-1}}\fr{\partial\ }{\partial r}\left(r^{d-1}\fr{\partial\ }
{\partial r}\right)\right)^t g(|x|),
$$
hence
$$
\int_{\mathbb B}|\nabla^t f|^2=\int_{\mathbb B}f(-\Delta)^t f=2\pi\omega_{d-1}\int_0^1g(r)\left(-\fr 1 {r^{d-1}}\fr{\partial\ }{\partial r}\left(r^{d-1}\fr{\partial\ }
{\partial r}\right)\right)^t g(r)r^{d-1}dr.
$$

We now observe that
\begin{equation}
\label{delta-f}
\left(\fr 1 {r^{d-1}}\fr{\partial\ }{\partial r}\left(r^{d-1}\fr{\partial\ }{\partial r}\right)\right)^t g(r)
=\sum_{h=t}^m\binom{m}{h}(-1)^{h}r^{2h-2t}\frac{(2h)!!}{(2h-2t)!!}\frac{(2h+d-2)!!}{(2h+d-2t-2)!!},
\end{equation}
from which we obtain
\begin{gather*}
\begin{array}{lll}
\dintd{0}{1}g(r)\left(-\fr 1 {r^{d-1}}\fr{\partial\ }{\partial r}\left(r^{d-1}\fr{\partial\ }{\partial r}\right)\right)^t g(r)r^{d-1}dr
& = & 2^{m+t}(m!)\dsum_{h=t}^m\binom{m}{h}(-1)^{h-t}\fr{h!}{(h-t)!}\frac{(2h+d-2)!!}{(2m+2h+d-2t-2)!!}\eqskip
& = & 2^{2t-1}\fr{(m!)^2}{(m-t)!}\dsum_{p=0}^{m-t}\binom{m-t}{p}\fr{(-1)^p}{\prod_{j=0}^{m-t}(t+p+d/2+j)}.
\end{array}
\end{gather*}
At this point we observe that, for any $p\ge 0$,
$$
\fr{1}{\prod_{j=0}^{m-t}(t+p+d/2+j)}=\fr{1}{\prod_{j=0}^{m-t}(t+d/2+j)}\frac{(t+d/2)_p}{(m+1+d/2)_p}=\fr{\Gamma(t+d/2)}{\Gamma(m+1+d/2)}\frac{(t+d/2)_p}{(m+1+d/2)_p},
$$
where $(a)_p$ is the usual Pochhammer symbol defined by $(a)_p={\ds \prod_{j=0}^{p-1}}(a+j)$. Now we use \cite[formula 15.2.4]{nist} to deduce that
$$
\sum_{p=0}^{m-t}\binom{m-t}{p}(-1)^{h-t}\fr{(-1)^p}{\prod_{j=0}^{m-t}(t+p+d/2+j)}=\frac{\Gamma(t+d/2)}{\Gamma(m+1+d/2)} F(-m+t,t+d/2;m+1+d/2;1),
$$
where $F(a,b;c;z)$ is the hypergeometric function, for which we recall~\cite[formula 15.4.20]{nist}
$$
F(a,b;c;1)=\fr{\Gamma(c)\Gamma(c-a-b)}{\Gamma(c-a)\Gamma(c-b)}\quad\text{if\ }c>a+b.
$$
Collecting all these formulas yields identity~\eqref{hypergeometric}.
\end{proof}

\begin{lem}
\label{fund}
Let $f(x)=(1-|x|^2)^m$. Then $f$ is an eigenfunction for~\eqref{auxbuck}, and its associated eigenvalue is
\begin{equation}\label{sigmaeigen}
\sigma=
\begin{cases}
\fr{(m+t)!!(m+t+d-2)!!}{(m-t)!!(m-t+d-2)!!}, & \text{ if } m-t \text{ is even},\eqskip
\fr{(m+t-1)!!(m+t+d-1)!!}{(m-t-1)!!(m-t+d-1)!!}, & \text{ if } m-t \text{ is odd}.
\end{cases}
\end{equation}
In particular, if $m=t$, then
$$
\sigma=
\fr{(2m)!!(2m+d-2)!!}{(d-2)!!}=2^{2m}\fr{\Gamma(m+1)\Gamma(m+d/2)}{\Gamma(d/2)},
$$
and for $d=1$,
$$
\sigma=
\begin{cases}
\fr{(m+t)!}{(m-t)!}, & \text{ if } m\neq t,\eqskip
(2m)!, & \text{ if } m=t.
\end{cases}
$$
\end{lem}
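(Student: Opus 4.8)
The plan is to check directly that $f(x)=(1-|x|^2)^m$ satisfies the boundary conditions in~\eqref{auxbuck} together with the associated weak identity, reading off $\sigma$ in the process. Since $1-|x|^2$ has a simple zero on $\partial\mathbb B$, the polynomial $f$ vanishes there to order $m$, which yields all the Dirichlet conditions $f=\partial f/\partial\nu=\dots=\partial^{m-1}f/\partial\nu^{m-1}=0$; moreover $\nabla^{m-t}f$, being the image of $f$ under a differential operator of order $m-t$, vanishes on $\partial\mathbb B$ to order at least $t$, so $|\nabla^{m-t}f|^2(1-|x|^2)^{-t}$ is bounded and $f\in\tilde H^{m,t}(\mathbb B)$. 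Because $C^\infty_c(\mathbb B)$ is dense in $\tilde H^{m,t}(\mathbb B)$, it suffices to establish
\[
\int_{\mathbb B}\nabla^m f\cdot\nabla^m v=\sigma\int_{\mathbb B}\frac{\nabla^{m-t}f\cdot\nabla^{m-t}v}{(1-|x|^2)^t},\qquad\forall\,v\in C^\infty_c(\mathbb B).
\]

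For the left-hand side, $f$ is a polynomial of degree $2m$, so $(-\Delta)^mf$ is a constant, obtained by applying $(-\Delta)^m$ to the leading term $(-1)^m|x|^{2m}$ of $f$; thus $(-\Delta)^mf=\Delta^m|x|^{2m}=4^m m!\,\Gamma(m+d/2)/\Gamma(d/2)=:\sigma_0$. Integrating by parts (no boundary terms, since $v$ has compact support) the left-hand side equals $\sigma_0\int_{\mathbb B}v$. The decisive point is that $\nabla^{m-t}f$ is divisible by $(1-|x|^2)^t$. Writing $m-t=2s$ or $m-t=2s+1$, the polynomial $\Delta^s f$ — respectively the radial vector field $\nabla\Delta^s f=2\,(\,\widehat{\Delta^s f}\,)'(|x|^2)\,x$, where $\widehat{\,\cdot\,}$ denotes the representation as a polynomial in $\rho=|x|^2$ — vanishes to order at least $t$ at $\rho=1$, so $\Delta^s f=(1-|x|^2)^t R(|x|^2)$, respectively $\nabla\Delta^s f=(1-|x|^2)^t\,U(|x|^2)\,x$, with $R$ (resp. $U$) a polynomial of degree $s$ in its argument. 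Hence $(1-|x|^2)^{-t}\nabla^{m-t}f$ is a polynomial of degree exactly $m-t$, smooth up to $\partial\mathbb B$, and applying to it the order-$(m-t)$ operator $(-1)^{m-t}\nabla^{m-t}$ from~\eqref{auxbuck} — which acts as $\Delta^s$ when $m-t=2s$ and, understood via the weak formulation, as $-\Delta^s\circ\operatorname{div}$ when $m-t=2s+1$ — produces a nonzero constant $h$. A further integration by parts turns the right-hand side into $\sigma h\int_{\mathbb B}v$, so $f$ is an eigenfunction of~\eqref{auxbuck} with eigenvalue $\sigma=\sigma_0/h$.

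It then remains to evaluate $h$ and to simplify $\sigma_0/h$. Using $\Delta^k|x|^{2n}=\left[\prod_{i=0}^{k-1}(2n-2i)(2n-2i+d-2)\right]|x|^{2n-2k}$ to extract the leading coefficients of $\Delta^s f$, of $R$ (resp. of $U$, via $\operatorname{div}\bigl(U(|x|^2)x\bigr)=dU+2|x|^2U'$), and of the resulting constant, one obtains $h$ as an explicit product of factors of the form $(2j)(2j+d-2)$; the quotient $\sigma_0/h$ then collapses, after the common factors cancel, to the double-factorial expression~\eqref{sigmaeigen}, the parity of $m-t$ selecting which of the two cases occurs. The stated specializations are then immediate: for $m=t$ one has $\nabla^0 f/(1-|x|^2)^m\equiv1$, hence $h=1$ and $\sigma=\sigma_0=2^{2m}\Gamma(m+1)\Gamma(m+d/2)/\Gamma(d/2)$, while the $d=1$ formulas follow from $(2k)!!(2k-1)!!=(2k)!$ and standard Gamma-function identities.

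I expect the main obstacle to be precisely this last, essentially computational, step: pushing the leading-coefficient bookkeeping cleanly through the even/odd dichotomy of $m-t$, and, in the odd case, correctly interpreting the outer $\nabla^{m-t}$ acting on the vector field $(1-|x|^2)^{-t}\nabla\Delta^s f$ (for which the weak formulation, rather than the formal PDE, is the safe reference). The structural part — that $\nabla^{m-t}f$ factors through $(1-|x|^2)^t$ and that applying an order-$(m-t)$ operator to a degree-$(m-t)$ polynomial gives a constant — is what makes $f$ an eigenfunction at all, and is the conceptual heart of the argument.
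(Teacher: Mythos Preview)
Your proposal is correct and follows essentially the same approach as the paper: compute $(-\Delta)^m f$ as a constant, observe that $\nabla^{m-t}f$ is divisible by $(1-|x|^2)^t$ so that the weighted term is a polynomial of degree $m-t$ whose image under the order-$(m-t)$ operator is again a constant, and then track only the top-degree monomial through the even/odd dichotomy to read off the double-factorial ratio. The paper verifies the strong equation directly (showing by induction that $\Delta_r^k g=(1-r^2)^{m-2k}P_k(r)$, which is exactly your divisibility statement) rather than passing through the weak formulation with compactly supported test functions, but this is a cosmetic difference; your anticipated ``main obstacle'' is precisely the explicit leading-coefficient bookkeeping that the paper carries out in full.
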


\begin{remark}
Denoting by $\left \lfloor{x}\right \rfloor $ the integer part of $x$, the value of $\sigma$ in~~\eqref{sigmaeigen} may be written as
$$
\sigma=2^{2t} \fr{\Gamma\left(\left\lfloor\frac{m+t+2}{2}\right\rfloor\right)\Gamma\left(\left\lfloor\frac{m+t+1}{2}\right\rfloor+\frac d 2\right)}{\Gamma\left(\left\lfloor\frac{m-t+2}{2}\right\rfloor\right)\Gamma\left(\left\lfloor\frac{m-t+1}{2}\right\rfloor+\frac d 2\right)},
$$
for all possible values of $m$ and $t$.
\end{remark}

\begin{proof}
From \eqref{delta-f} we already know that
$$
(-\Delta)^m f=\fr{(2m+d-2)!!(2m)!!}{(d-2)!!},
$$
so we are left with computing $(-1)^{m-t}\nabla^{m-t}\fr{1}{(1-|x|^2)^t}\nabla^{m-t}f$. We must distinguish two cases.

{\bf Case $m-t$ even.} Write $k=\fr{m-t}{2}$. We have to compute
$
(-1)^{m-t}\Delta_r^{k}\fr{1}{(1-r^2)^t}\Delta_r^{k}g,
$
where we have put $g(|x|)=f(x)$ and $\Delta_r=\left(\fr 1 {r^{d-1}}\fr{\partial\ }{\partial r}\left(r^{d-1}\fr{\partial\ }{\partial r}\right)\right)$.

We first show (by induction) that, if $m\ge 2k$, then
\begin{equation}
\label{evenoddcomp}
\Delta_r^{k}g(r) =(1-r^2)^{m-2k} P_k(r),
\end{equation}
where $P_k$ is an even polynomial of degree $2k$. Naturally the claim holds for $k=0$, so now we consider the claim valid for $k-1$. We have
\begin{equation*}
 \begin{array}{lll}
\Delta_r^{k}g(r) =\Delta_r(\Delta_r^{k-1}g(r)) & = & \Delta_r\left( (1-r^2)^{m-2k+2} P_{k-1}(r)\right)\eqskip
& = & (1-r^2)^{m-2k+2}\left(P_{k-1}''(r)+(d-1)r^{-1}P_{k-1}'(r)\right)\eqskip
& & \hspace*{5mm}-2(m+2k-2)(1-r^2)^{m-2k+1}\left(dP_{k-1}(r)+2rP_{k-1}'(r)\right)\eqskip
& & \hspace*{10mm} +4(m-2k+2)(m-2k+1)(1-r^2)^{m-2k}r^2P_{k-1}(r),
 \end{array}
\end{equation*}
proving the claim. Notice in particular that, since $P_{k-1}$ is even, then $P_{k-1}'$ is odd, meaning that  $r^{-1}P_{k-1}'(r)$ is an even polynomial.

It is now clear that  $\Delta_r^{k}\fr{1}{(1-r^2)^t}\Delta_r^{k}g$ will be a constant, hence we only have to keep track of the higher order monomial.
We will write $R(r)$ to indicate lower order terms that will eventually vanish. Employing formula~\eqref{delta-f}, we obtain
\begin{gather*}
\begin{array}{lll}
\Delta_r^{k}\fr{1}{(1-r^2)^t}\Delta_r^{k}g & = & \Delta_r^{k}\fr{1}{(1-r^2)^t}\Delta_r^{k} ((-1)^m r^{2m}+R(r))\eqskip
& = &(-1)^m \Delta_r^k\fr1{(1-r^2)^t}\left(r^{m+t}\dprod_{j=0}^{k-1}(2m-2j)\dprod_{j=1}^{k}(2m+d-2j)+R(r)\right)\eqskip
& = & (-1)^{m-t} \dprod_{j=0}^{k-1}(2m-2j)\dprod_{j=1}^{k}(2m+d-2j)\Delta_r^k\left(r^{m-t}+R(r)\right)\eqskip
& = & (-1)^{m-t}\fr{(2m)!!(2m+d-2)!!}{(m+t)!!(m+t+d-2)!!}(m-t)!!(m-t+d-2)!!\; .
\end{array}
\end{gather*}

{\bf Case $m-t$ odd.} Write $k=\fr{m-t-1}{2}$. We have to compute
$
(-1)^{m-t}\Delta_r^{k}r^{1-d}\partial_r\fr{r^{d-1}}{(1-r^2)^t}\partial_r\Delta_r^{k}g.
$
We first show that, if $m\ge 2k$, then
$$
\partial_r\Delta_r^{k}g(r) =(1-r^2)^{m-2k-1} r Q_k(r),
$$
where $Q_k$ is an even polynomial of degree $2k$. This follows directly from~\eqref{evenoddcomp}. 

It is now clear that  $\Delta_r^{k}r^{1-d}\partial_r\fr{r^{d-1}}{(1-r^2)^t}\partial_r\Delta_r^{k}g$ will be a constant, hence we only have to keep track of the higher order monomial.
We will write $R(r)$ to indicate lower order terms that will eventually vanish. Using formula~\eqref{delta-f}, we obtain
\begin{gather*}
 \begin{array}{lll}
\Delta_r^{k}r^{1-d}\partial_r\fr{r^{d-1}}{(1-r^2)^t}\partial_r\Delta_r^{k}g & = &\Delta_r^{k}r^{1-d}\partial_r\fr{r^{d-1}}{(1-r^2)^t}\partial_r\Delta_r^{k} ((-1)^m r^{2m}+R(r))\eqskip
& = & (-1)^m \Delta_r^kr^{1-d}\partial_r\fr{r^{d-1}}{(1-r^2)^t}\left((m+t+1)r^{m+t}\dprod_{j=0}^{k-1}(2m-2j)\dprod_{j=1}^{k}(2m+d-2j)+R(r)\right)\eqskip
& = & (-1)^{m-t} (m+t+1) (m-t+d-1)\dprod_{j=0}^{k-1}(2m-2j)\dprod_{j=1}^{k}(2m+d-2j)\Delta_r^k\left(r^{m-t-1}+R(r)\right)\eqskip
& = & (-1)^{m-t}\fr{(2m)!!(2m+d-2)!!}{(m+t+1)!!(m+t+d-1)!!}(m-t-1)!!(m-t+d-1)!!.
\end{array}
\end{gather*}
\end{proof}


\section*{Acknowledgements} DB wishes to thank Luigi Provenzano and Jean Lagac\'e for fruitful discussions on earlier
versions of the manuscript that led to its improvement. DB is a member of the Gruppo Nazionale per l’Analisi Matematica,
la Probabilit\`a e le loro Applicazioni (GNAMPA) of the Istituto Nazionale di Alta Matematica (INdAM). DB was
partially supported by the GNAMPA 2022 project ``Modelli del 4o ordine per la dinamica di strutture ingegneristiche: aspetti analitici
e applicazioni'' and by the GNAMPA 2023 project ``Operatori differenziali e integrali in geometria spettrale''. DB also acknowledges support from the project ``Perturbation problems and asymptotics for elliptic differential equations: variational and potential theoretic methods" funded by the European Union - Next Generation EU and by MUR Progetti di Ricerca di Rilevante Interesse Nazionale (PRIN) Bando 2022 grant 2022SENJZ3.
PF was partially supported by the Funda\c c\~{a}o para a Ci\^{e}ncia e a Tecnologia (Portugal)
through project UIDB/00208/2020.

\subsection*{Data availability} not applicable.

\end{document}